\newcommand{\bb}[1]{\mathbb{#1}}
\newcommand{\cc}[1]{\mathcal{#1}}
\newcommand{\bN}{{\mathbb N}}
\newcommand{\bR}{{\mathbb R}}
\newcommand{\abs}[1]{\left|{#1}\right|}
\newcommand{\paren}[1]{\left({#1}\right)}
\newcommand{\diver}{\operatorname{div}}
\newcommand{\vol}{\operatorname{vol}}
\newcommand{\Ric}{\operatorname{Ric}}
\newcommand{\KN}{\mathbin{\bigcirc\mspace{-15mu}\wedge\mspace{3mu}}}
\newtheorem{theorem}{Theorem}[section]
\newtheorem{lemma}[theorem]{Lemma}
\newtheorem{prop}[theorem]{Proposition}
\newtheorem{definition}[theorem]{Definition}
\newtheorem{remark}[theorem]{Remark}
\newtheorem*{namedtheorem}{\theoremname}
\newcommand{\theoremname}{testing}
\newenvironment{named}[1]{\renewcommand{\theoremname}{#1}\begin{namedtheorem}}{\end{namedtheorem}}
\begin{document}

\title{Quasiconformal Flows on non-Conformally Flat Spheres}

\author{Sun-Yung Alice Chang}

\author{Eden Prywes}

\author{Paul Yang}

\thanks{Sun-Yung Alice Chang and Paul Yang were partially supported by the NSF grant DMS-1607091.  Paul Yang was also supported by the Simons Foundation grant 615589.  Eden Prywes was partially supported by the NSF grant RTG-DMS-1502424.}

\date{\today}
\begin{abstract}
     We study integral curvature conditions for a Riemannian metric $g$ on $S^4$ that quantify the best bilipschitz constant between $(S^4,g)$ and the standard metric on $S^4$.
     Our results show that the best bilipschitz constant is controlled by the $L^2$-norm of the Weyl tensor and the $L^1$-norm of the $Q$-curvature, under the conditions that those quantities are sufficiently small, $g$ has a positive Yamabe constant and the $Q$-curvature is mean-positive.
     The proof of the result is achieved in two steps.  Firstly, we construct a quasiconformal map between two conformally related metrics in a positive Yamabe class.  Secondly, we apply the Ricci flow to establish the bilipschitz equivalence from such a conformal class to the standard conformal class on $S^4$.

\end{abstract}
\maketitle
\section{Introduction}

A bilipschitz map between two metric spaces $(X,d_X)$ and $(Y,d_Y)$ is a map $f \colon X \to Y$ for which there exists a constant $L > 0$ so that for all $x_1,x_2 \in X$,
\begin{align*}
    \frac{1}{L}d_X(x_1,x_2) \le d_Y(f(x_1),f(x_2)) \le Ld_X(x_1,x_2).
\end{align*}
Let $g$ be a smooth Riemannian metric on $S^4$.
In this paper we give integral curvature conditions which control the best bilipschitz constant between $(S^4,g)$ and $S^4$ equipped with the standard spherical metric.  We show that the dependence of $L$ can be controlled by the conformally invariant $L^2$-norm of the Weyl tensor and the $L^1$-norm of the $Q$-curvature whenever those norms are sufficiently small.
The only non-conformally invariant quantity that controls the bilipschitz constant is the $Q$-curvature of the metric.

The conformally invariant Yamabe constant, $Y([g])$ is defined as
\begin{align*}
    Y([g]) = \inf_{\hat g = e^{2w}g} \frac{1}{\operatorname{vol}_{\hat g}(S^4)^{1/2}}\int R_{\hat g} dv_{\hat g}.
\end{align*}
We prove the following theorem for metrics with non-negative Yamabe constant.
\begin{theorem}\label{thm-qcflowbilip}
Let $g_1$ be a metric so that $Y([g_1]) \ge 0$.  Let $g_0$ be a metric in the conformal class of $g_1$.
Suppose additionally that
\begin{align}
    \int_{S^4}Q_1 dv_1 \ge 0,
\end{align}
where $Q_1$ is the $Q$-curvature of $g_1$.
Let $w \colon S^4 \to \bR$ be the smooth function such that $g_1 = e^{2w}g_0$. 
There exists $\epsilon_1 > 0$ such that if
\begin{align*}
    \alpha = \int_{S^4} |Q_1e^{4w} - Q_0|dv_0 < \epsilon_1,
\end{align*}
then there exists a quasiconformal and bilipschitz map $f \colon (S^4,g_1) \to (S^4,g_0)$ whose bilipschitz constant depends only on $\alpha$ and $g_0$.  More precisely, the bilipschitz constant $L_1$ of $f$ satisfies
\begin{align*}
    L_1 \le 1 + C \alpha,
\end{align*}
where $C$ only depends on $g_0$.
\end{theorem}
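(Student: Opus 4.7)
The strategy rests on the conformal transformation law for the $Q$-curvature in dimension four,
\[ P_0 w + Q_0 = Q_1 e^{4w}, \]
where $P_0$ denotes the Paneitz operator of $g_0$. The hypothesis therefore reads $\|P_0 w\|_{L^1(dv_0)} = \alpha$, and the plan is to extract pointwise smallness of $w$ from this integral control via elliptic regularity for $P_0$ and then take the identity (possibly composed with a conformal self-diffeomorphism of $(S^4, g_0)$) as the desired map $f$.

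The first step is to establish coercivity of $P_0$. The quantity $\int Q\,dv$ is a conformal invariant (since $P_0$ is self-adjoint with constants in its kernel), so the hypothesis $\int Q_1\,dv_1 \ge 0$ together with $Y([g_0]) \ge 0$ places $g_0$ in the class studied by Gursky, yielding $P_0 \ge 0$ with $\ker P_0 = \bR$. Standard $L^p$-theory for the fourth-order elliptic operator $P_0$ then provides, for each $p \in (1,\infty)$ and each $v$ orthogonal to constants in $L^2(dv_0)$, the a priori estimate
\[ \|v\|_{W^{4,p}(S^4,g_0)} \le C(p,g_0)\,\|P_0 v\|_{L^p(S^4,g_0)}. \]

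The core technical step is to upgrade $L^1$ control of $P_0 w$ to $L^\infty$ control of $w - \bar w$. Since $P_0$ has order equal to the dimension, the Sobolev embedding $W^{4,p} \hookrightarrow L^\infty$ just barely fails at $p=1$, so a direct application would give at most $\|w - \bar w\|_{L^\infty} \le C \|P_0 w\|_{L^p}$ for $p > 1$. The plan is a bootstrap/continuity argument along the family $g_t = e^{2tw}g_0$, $t \in [0,1]$, starting from $t = 0$ where $w = 0$: under an a priori $L^\infty$ bound on $w_t$ that is propagated continuously in $t$, the factor $e^{4w_t}$ is bounded and the right-hand side $Q_{t} e^{4w_t} - Q_0$ of the Paneitz equation is controlled in $L^\infty$; interpolating the resulting $L^\infty$ bound against the $L^1$ bound $\alpha$ yields an $L^p$ bound with $p > 1$ proportional to a positive power of $\alpha$, and the elliptic estimate together with $W^{4,p} \hookrightarrow L^\infty$ closes the bootstrap to give $\|w - \bar w\|_{L^\infty} \le C(g_0)\,\alpha$. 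This is where the smallness threshold $\epsilon_1$ is fixed.

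Finally, the remaining additive constant $\bar w$ is absorbed by composing with a conformal self-diffeomorphism $\phi$ of $(S^4, g_0)$, for instance a dilation in a stereographic chart, chosen so that $\phi^* g_0 = e^{2\psi} g_0$ with $\psi$ matching the mean value $\bar w$. The resulting map $f = \phi \colon (S^4, g_1) \to (S^4, g_0)$ is $1$-quasiconformal, and its bilipschitz constant is governed by $\exp(\|w - \psi\|_{L^\infty}) \le \exp(C\alpha) \le 1 + C'\alpha$. The principal obstacle is the critical-dimension elliptic regularity in the third paragraph --- a one-shot $L^1$-to-$L^\infty$ estimate for $P_0$ is unavailable, and the bootstrap is what makes the linear dependence on $\alpha$ work.
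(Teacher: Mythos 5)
There is a fatal gap at the core of your argument: the claim that $\|P_0 w\|_{L^1}=\alpha$ can be upgraded to $\|w-\bar w\|_{L^\infty}\le C(g_0)\alpha$ is false, and no bootstrap can rescue it. The Green's function of the Paneitz operator has a logarithmic singularity, $G(x,y)=\frac{1}{8\pi^2}\log\frac{1}{d_{g_0}(x,y)}+O(1)$, so if $Q_1e^{4w}-Q_0$ concentrates near a point $x_0$ with total mass of size $\alpha$, then $w(x)\approx\frac{\alpha}{8\pi^2}\log\frac{1}{d_{g_0}(x,x_0)}+O(1)$ is unbounded no matter how small $\alpha$ is. Your bootstrap cannot close because the interpolation step requires an $L^\infty$ bound on $Q_1e^{4w}-Q_0$, which is \emph{not} controlled by $\alpha$ and $g_0$: the theorem must hold uniformly over all $g_1$ with $\int|Q_1e^{4w}-Q_0|\,dv_0<\epsilon_1$, and $\|Q_1e^{4w}\|_\infty$ can be arbitrarily large in this class. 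Consequently the identity map (even after composing with a conformal self-map, which for a general $g_0$ need not exist anyway) is not bilipschitz with constant $1+C\alpha$ --- with $w\sim\frac{\alpha}{8\pi^2}\log\frac{1}{d}$ its derivative from $(S^4,e^{2w}g_0)$ to $(S^4,g_0)$ blows up like $d^{-\alpha/8\pi^2}$.

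This failure is exactly why the theorem is nontrivial and why the paper's proof takes a completely different route: following Bonk--Heinonen--Saksman, one constructs a genuinely non-identity quasiconformal map $f$ as the time-one flow of a vector field $v$ built from the Green's function, chosen so that $\operatorname{div}_{g_0}v\approx 4\,(L\eta)\circ\Phi$ with $\eta=Q_1e^{4w}-Q_0$ and $L\eta=w+c$. The flow machinery (Propositions \ref{prop-qcflowmfd} and \ref{prop-jacobianflowmfd}) gives $\log J_f=4w+O(\|\eta\|_1)$ and a quasiconformal constant $\le e^{C\alpha}$, and Proposition \ref{prop-qcandcomparablejacobianbilip} converts ``quasiconformal with Jacobian comparable to $e^{4w}$'' into ``bilipschitz from $(S^4,e^{2w}g_0)$ to $(S^4,g_0)$'' --- precisely without ever needing $w$ itself to be bounded. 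The smallness $\epsilon_1$ enters not through elliptic regularity but through the iteration bound of Lemma \ref{lemma-constantiterbound}, which keeps the quasiconformal dilatation under control along the discretized flow. You correctly identified the critical-exponent obstruction in your last sentence, but the resolution is to abandon the identity map entirely, not to bootstrap around it.
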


We also prove a theorem that quantifies the bilipschitz constant of the identity map between $(S^4,g_0)$ and $S^4$ equipped with the standard metric. 
The solution to the Yamabe problem in this setting guarantees the existence of a metric in the conformal class of $g$ that has constant scalar curvature.  This metric is often called the Yamabe metric.
The following theorem states that the best bilipschitz constant between the Yamabe metric of a conformal class and the standard spherical metric depends on the $L^2$-norm of the Weyl tensor.
\begin{theorem}\label{thm-riccibilip}
Let $g_0$ be a Yamabe metric of constant and positive scalar curvature on $S^4$, let
\begin{align*}
    \beta = \int_{S^4}|W|^2 dv_0,
\end{align*}
and let
\begin{align*}
   \gamma = \int_{S^4}|B|^2 dv_0,
\end{align*}
where $W$ is the Weyl tensor and $B$ is the Bach tensor.
There exists a constant $\epsilon_2 > 0$ such that if $\beta < \epsilon_2$, then 
$(S^4,g_0)$ is bilipschitz equivalent to $(S^4,g_c)$, where $g_c$ is the standard metric for $S^4$. That is, for all $x,y \in \bR^4$ there exists $L_2 > 0$ such that
\begin{align*}
  \frac{1}{L_2}d_{g_c}(x,y) \le  d_{g_0}(x,y) \le L_2d_{g_c}(x,y),
\end{align*}
where $L_2 \le 1 + C'\gamma\beta$.
\end{theorem}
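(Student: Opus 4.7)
\emph{Proof proposal.} I would evolve $g_0$ by the volume-normalized Ricci flow
\begin{align*}
\partial_t g = -2\Ric_g + \tfrac{1}{2}\bar R(t)\, g, \qquad g(0) = g_0,
\end{align*}
and show that it converges smoothly to a round metric, quantifying the total metric distortion along the way. Because $g_0$ has constant scalar curvature, the Gauss--Bonnet--Chern formula on $S^4$,
\begin{align*}
16\pi^2 = \int_{S^4}\Big(\tfrac{1}{4}|W|^2 - \tfrac{1}{2}|\Ric^\circ|^2 + \tfrac{1}{24}R^2\Big)\,dv_0,
\end{align*}
shows that $\int|\Ric^\circ|^2 dv_0$ is controlled by $\beta$ together with the (small) deviation of $R^2\vol(g_0)$ from its round value, which in turn is controlled via the Yamabe constant. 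Choosing $\epsilon_2$ small yields a curvature pinching condition strong enough to invoke a four-dimensional Margerin/Huisken-type convergence theorem: the flow exists for all $t \ge 0$ and converges, modulo a family of diffeomorphisms, to the standard round metric $g_c$.

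Next, since the pointwise speed of the flow is $|\partial_t g|_{g(t)} = 2|\Ric^\circ_{g(t)}|$ (the normalization cancels the trace when $R$ is approximately constant), the bilipschitz constant of the identity from $(S^4,g_0)$ to $(S^4,g_c)$ satisfies
\begin{align*}
\log L_2 \;\le\; \int_0^\infty \|\Ric^\circ_{g(t)}\|_{L^\infty(g(t))}\,dt,
\end{align*}
so the theorem reduces to showing that this integral is of order $\gamma\beta$.

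To obtain the quantitative decay I would exploit the Bach tensor $B_{ij} = \nabla^k\nabla^l W_{ikjl} + \tfrac{1}{2} R^{kl} W_{ikjl}$, which vanishes on Einstein metrics and serves as the natural $L^2$-gradient of $\int|W|^2$ in four dimensions. Along Ricci flow one has an inequality of the form
\begin{align*}
\tfrac{d}{dt}\int|W|^2\,dv \;\le\; -c\int|B|^2\,dv + (\text{cubic terms in curvature}),
\end{align*}
which, combined with a Poincar\'e/Lichnerowicz-type estimate $\int|\Ric^\circ|^2 \le C\int|B|^2$ valid near the round metric, should give exponential-in-time decay of $\int|\Ric^\circ|^2$ with rate determined by $\beta$. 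Upgrading this $L^2$-decay to $L^\infty$ via Shi's derivative estimates and Moser iteration, and then integrating in $t$, yields $\log L_2 \le C'\gamma\beta$, whence $L_2 \le 1 + C'\gamma\beta$ as claimed.

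The main obstacle is pinning down the decay with exactly the stated $\gamma\beta$ dependence: the cubic remainder must be absorbed uniformly along the flow, which requires propagating Shi-type smoothing estimates from the initial data, and the interplay between the initial Bach norm $\gamma$, the Weyl norm $\beta$ and the temporal decay rate must be calibrated precisely. Standard Ricci flow convergence theory gives only qualitative decay; turning it into the sharp quantitative $\gamma\beta$ estimate is the central difficulty.
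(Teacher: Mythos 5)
Your overall framework matches the paper's: run the normalized Ricci flow from $g_0$, use the Gauss--Bonnet--Chern formula together with Aubin's bound on the Yamabe constant to control $\int|E|^2$ by $\beta$, invoke a Gursky/Huisken-type convergence theorem for the long-time limit, and bound the bilipschitz constant by integrating $\|\partial_t g\|_\infty$ in time. The divergence, and the gap, lies in the quantitative decay step. The paper does not derive decay from a monotonicity formula for $\int|W|^2$; it quotes the quantitative estimate of Chang--Chen, $\||W|+|E|+|R-\bar R|\|_\infty(t)\le C_1(1+t^{-2/p})e^{-C_2t}\||W|+|E|\|_p(0)$ for $p\in[2,2+\tfrac13]$, and the Bach tensor enters only through an \emph{elliptic estimate at $t=0$}: via the second Bianchi identity and the conformally invariant Sobolev inequality one shows $\|E\|_4^2,\ \|W\|_4^2\le CM^{1/2}\|B\|_2$ (Proposition \ref{prop-bachcontrol}), which upgrades the initial $L^2$ control to $L^{2+\delta}$ and renders the $t^{-2/p}$ singularity integrable near $t=0$. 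That is where $\gamma$ appears; it plays no dynamical role.

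Your proposed mechanism has two concrete problems. First, the inequality $\int|\Ric^\circ|^2\le C\int|B|^2$ is false, even arbitrarily close to the round metric: the Bach tensor is conformally covariant, so $g=e^{2w}g_c$ with $w$ small and non-constant has $B\equiv 0$ while $\Ric^\circ\ne 0$. Since the scalar curvature does not remain constant along the flow for $t>0$, you cannot exclude this conformal degeneracy by gauge. Second, the differential inequality $\frac{d}{dt}\int|W|^2\,dv\le -c\int|B|^2\,dv+(\text{cubic})$ is unsubstantiated: because $B$ is the gradient of the Weyl functional, the first variation along Ricci flow is of the form $-c\int\langle B,\Ric\rangle\,dv$, which has no sign. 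You correctly flag that the sharp $\gamma\beta$ dependence is the central difficulty, but the route you sketch toward it rests on these two steps and would fail; the paper instead imports the temporal decay wholesale from \cite{changchen2021} and uses $\gamma$ solely for the initial-time integrability gain.
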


From these  theorems we directly deduce the main theorem of the paper.  
\begin{theorem}[Main Theorem]\label{thm-spherebilip}
Let $g$ be a smooth Riemannian metric on $S^4$ with a positive Yamabe constant and let $g_0$ be the Yamabe metric in the conformal class of $g$ with constant scalar curvature.  Additionally, suppose that
\begin{align*}
    \int_{S^4} Q dv \ge 0,
\end{align*}
where $Q$ is the $Q$-curvature for $g$.  There exists constants $\epsilon_1, \epsilon_2 > 0$ so that if
\begin{align*}
  \alpha = \int_{S^4} |Q_1e^{4w} - Q_0|dv_0 < \epsilon_1
\end{align*}
and
\begin{align*}
   \beta = \int_{S^4} |W_g|^2 dv_g < \epsilon_2,
\end{align*}
then there exists a bilipschitz map $f \colon (S^4,g) \to (S^4,g_c)$, where $g_c$ is the standard metric on $S^4$, such that 
the bilipschitz constant $L$ of $f$ depends on $\alpha$ and $\beta$ in the sense that
\begin{align*}
    L \le 1 + C_1\alpha +C_2\beta,
\end{align*}
where $C_1$ may depend on the conformal class of $g$ and $C_2$ only depends on the $L^2$-norm of the Bach tensor for $g_0$.
\end{theorem}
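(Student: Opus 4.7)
The Main Theorem follows by composing the two bilipschitz maps produced by Theorem \ref{thm-qcflowbilip} and Theorem \ref{thm-riccibilip}. The plan is to take $g_1 = g$ in Theorem \ref{thm-qcflowbilip}, with $g_0$ being the Yamabe representative specified in the hypothesis. Since $Y([g]) > 0$ and $\int_{S^4} Q\, dv \ge 0$, the hypotheses of Theorem \ref{thm-qcflowbilip} are satisfied, so provided $\alpha < \epsilon_1$ we obtain a bilipschitz map $f_1 \colon (S^4,g) \to (S^4,g_0)$ with bilipschitz constant
\begin{align*}
    L_1 \le 1 + C\alpha,
\end{align*}
where $C$ depends only on $g_0$, i.e.\ on the conformal class of $g$.

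Next, I would apply Theorem \ref{thm-riccibilip} to the Yamabe metric $g_0$. The key observation that makes this legal is that the integrand $|W|^2 dv$ is pointwise conformally invariant in dimension $4$, so
\begin{align*}
    \int_{S^4} |W_{g_0}|^2 dv_0 = \int_{S^4} |W_g|^2 dv_g = \beta < \epsilon_2.
\end{align*}
Hence Theorem \ref{thm-riccibilip} yields a bilipschitz map $f_2 \colon (S^4,g_0) \to (S^4,g_c)$ with bilipschitz constant
\begin{align*}
    L_2 \le 1 + C'\gamma\beta,
\end{align*}
where $\gamma = \int_{S^4} |B|^2 dv_0$ is the $L^2$-norm of the Bach tensor of $g_0$.

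I would then set $f = f_2 \circ f_1$ and estimate the bilipschitz constant of the composition by $L \le L_1 L_2$. Expanding gives
\begin{align*}
    L \le (1+C\alpha)(1+C'\gamma\beta) = 1 + C\alpha + C'\gamma\beta + CC'\alpha\gamma\beta.
\end{align*}
Using the smallness $\alpha < \epsilon_1$ and $\beta < \epsilon_2$ I would absorb the cross term into one of the linear terms, yielding a bound of the form
\begin{align*}
    L \le 1 + C_1\alpha + C_2\beta,
\end{align*}
where $C_1$ depends on the conformal class of $g$ (through the dependence of $C$ on $g_0$) and $C_2 = C'\gamma$ depends on the $L^2$-norm of the Bach tensor of $g_0$, as claimed.

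There is essentially no obstacle here beyond careful bookkeeping, since all of the substantive analysis has already been carried out in Theorems \ref{thm-qcflowbilip} and \ref{thm-riccibilip}. The only subtle point worth emphasizing is the use of conformal invariance of $\int |W|^2 dv$ in dimension $4$, which is what allows the smallness hypothesis $\beta < \epsilon_2$ (formulated for the original metric $g$) to pass over to $g_0$ so that Theorem \ref{thm-riccibilip} may be applied.
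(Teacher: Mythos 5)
Your proposal is correct and matches the paper's intended argument: the paper states that the Main Theorem is "directly deduced" from Theorems \ref{thm-qcflowbilip} and \ref{thm-riccibilip}, precisely by composing the two bilipschitz maps and using the conformal invariance of $\int_{S^4}|W|^2\,dv$ in dimension $4$ to transfer the smallness of $\beta$ to the Yamabe metric $g_0$. Your bookkeeping of the constants, including absorbing the cross term $CC'\alpha\gamma\beta$, is the right way to obtain the stated bound $L \le 1 + C_1\alpha + C_2\beta$.
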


The result in the main theorem is related to the general question of bilipschitz uniformization of a metric space.  
What conditions are needed on a metric space for it to be bilipschitz equivalent to a model space (in our case the four-dimensional sphere)?
In the two-dimensional setting this problem has been studied extensively.
Bonk and Lang in \cite{bonklang} showed that $(\bR^2,g)$ is bilipschitz equivalent to $(\bR^2,|dx|^2)$ if the Gaussian curvature is integrable and the integral of the positive part of the Gaussian curvature is less than $2\pi$.  In two dimensions this is a sharp result as a counterexample can be constructed by taking an infinite cylinder with one hemispherical cap.  This theorem was first conjectured by Fu in \cite{fu}, where he showed a similar result with the added condition that the $L^1$-norm of the Gaussian curvature of $g$ is small.

Bonk, Heinonen and Saksman in \cite{bonkheinonensaksman} were able to extend the two dimensional result to all even dimensions.  They showed that $(\bR^4,e^{2w}|dx|^2)$ is bilipschitz to $(\bR^4,|dx|^2)$ whenever the $L^1$-norm of the $Q$-curvature for $e^{2w}$ is sufficiently small.
Their methods can be applied to the spherical setting and so a special case of Theorem \ref{thm-qcflowbilip}, when $g_1$ is conformally flat and $g_0$ is the standard metric on the sphere, follows from their work.

The results in \cite{bonkheinonensaksman} were improved upon in \cite{wang12} where Wang showed that if the $Q$-curvature is integrable and the positive part of the $Q$-curvature integrates to less than $2^{n-2}((n-2)/2)!\pi^{n/2}$, then the space is bilipschitz equivalent to the standard Euclidean space.
In both the two dimensional and higher dimensional cases the inequality is sharp.  An infinite cylinder with one hemispherical cap will give the equality case and such a manifold is not bilipschitz equivalent to the standard Euclidean space.
This method was also applied to the Heisenberg group by Austin in \cite{austin}, where the author constructed a quasiconformal map whose Jacobian was comparable to the logarithmic potential of a given measure with small total variation.

We also note that quantifying the best bilipschitz constant between two non-compact manifolds also quantifies the best constant in the isoperimetric inequality of the manifold.
In dimension two, the asymptotic isoperimetric inequality on a non-compact surface depends only on the integral of the Gaussian curvature.  This was shown by Finn in \cite{finn} and followed the work of Cohn-Vossen \cite{cohn-vossen} and Huber \cite{huber}.
Following the two-dimensional result, asymptotic isoperimetric inequalities in four dimensions  were shown to depend on the $L^1$-norm of the $Q$-curvature by Chang, Qing and Yang in \cite{changqingyang2} and \cite{changqingyang}.  This was also generalized by Wang, in \cite{wang15}.  Wang showed that the constant in the isoperimetric inequality only depends on the $L^1$-norm of the $Q$ curvature when the underlying manifold is $\bR^4$.

We adapt the methods in \cite{bonkheinonensaksman} to consider the non-conformally flat setting.
We study the dependence of the best bilipschitz constant between $(S^4,g)$ and $(S^4,g_c)$, where $g_c$ is the standard metric on $S^4$, on integral curvature quantities.  The dependence relies on the conformal invariant $L^2$-norm of the Weyl tensor and the $L^1$-norm of the $Q$-curvature of $g$.  Our main theorem can also be applied to certain settings on $\bR^4$. For example, if $\bR^4$ is equipped with a metric that is asymptotically flat, we expect our results to still hold.
For simplicity we work only on $S^4$ and we do not discuss the non-compact setting.
In the compact setting, the identity map between $(S^4,g_1)$ and $(S^4,g_0)$ is always bilipschitz.  So, opposed to the previous work on $\bR^4$, the question of the existence of a bilipschitz map is no longer relevant.  
Rather, we study the dependence of the best bilipschitz constant on the $Q$-curvature and Weyl tensor.

The bilipschitz map given by the main theorem is achieved as the composition of two maps. 
The first map is a bilipschitz map between the original metric and the Yamabe metric within its conformal class.  This map is constructed via a vector flow argument that follows the work in \cite{bonkheinonensaksman}.
The flow yields a quasiconformal map from $(S^4,g_1)$ to $(S^4,g_0)$; it is additionally bilipschitz as a map between $(S^4,g_1)$ and $(S^4,g_0)$.

The use of vector fields to construct quasiconformal mappings on $\bR^n$ was first developed by Reimann in \cite{reimann76}.  He showed that if the Ahlfors conformal strain tensor of a vector field $v$ satisfied
\begin{align*}
    \|Sv\|_\infty = \| (Dv + Dv^T)/2 - \operatorname{Tr}(Dv)\operatorname{Id}/n\|_\infty < c,
\end{align*}
then the flow along $v$ is quasiconformal with a quasiconformal constant that is less than $e^{nct}$ at time $t \ge 0$.
Pierzchalski in \cite{pierzchalski} partially generalized the work of Reimann to the Riemannian manifold setting.
Bonk, Heinonen and Saksman, in \cite{bonkheinonensaksman}, developed this further to show that the Jacobian of the flow maps corresponded to the divergence of $v$.  We discuss the quasiconformal theory in Section \ref{sec-qcflows}.
We also generalize the quasiconformal vector flow theory of the above authors to the more general setting of Riemannian metrics on $S^4$.

On a Riemannian manifold $(M,g)$, the $Q$-curvature is defined as
\begin{align*}
    Q = \frac{1}{12}\paren{-\Delta R + \frac{1}{4}R^2-3|E|^2},
\end{align*}
where $R$ is the scalar curvature and $E$ is the traceless Ricci tensor.  The $Q$-curvature satisfies a conformal transformation law similar to the Gaussian curvature for surfaces.  More precisely,
\begin{align}\label{eq-qtransformation}
    P_{g_0}w + 2 Q_{g_0}=2Q_ge^{4w},
\end{align}
whenever $g = e^{2w}g_0$.  Here, $P_{g_0}$ is the Paneitz operator and is defined by
\begin{align*}
    P = \Delta^2 + \operatorname{div}\paren{\frac{2}{3}R-2\operatorname{Ric}}d.
\end{align*}
If $g_0 = |dx|^2$ on $\bR^4$, then \eqref{eq-qtransformation} becomes
\begin{align*}
    \Delta^2w = 2Q_ge^{4w}.
\end{align*}
The function $w$, under the condition that $e^{2w}|dx|^2$ is a normal metric, can be expressed as a logarithmic potential
\begin{align*}
    w(x) = \frac{1}{8\pi^2}\int_{\bR^4} \log\frac{|y|}{|x-y|}Q_g(y)e^{4w(y)}dy + C,
\end{align*}
where $C$ is a constant (see \cite{changqingyang} for a discussion of normal metrics).
Using this observation, the authors of \cite{bonkheinonensaksman} defined a vector field $v$ so that
\begin{align*}
    \operatorname{div}v \sim \int_{\bR^4} \log\frac{|y|}{|x-y|}Q_g(y)e^{4w(y)}dy.
\end{align*}
If a map is given as the solution to the differential equation
\begin{align*}
    \frac{df(t,x)}{dt} = v(f(t,x)),
\end{align*}
then the Jacobian of $f$ will be comparable to $e^{4w}$.  The map $f$ was then shown to be quasiconformal as well, which implies that it is bilipschitz between $(\bR^4,e^{2w}|dx|^2)$ and the standard Euclidean space.

The crucial part of the argument in \cite{bonkheinonensaksman} is that the vector field arises from a logarithmic potential.  In order to adapt these methods to prove Theorem \ref{thm-qcflowbilip}, we employ the Green's function for the Paneitz operator as a substitute for the logarithmic potential.  
The Green's function for the Paneitz have been studied previously (see e.g., Gursky and Malchiodi \cite{gurskymalchiodi} and Hang and Yang \cite{hangyang}).
Whenever the Paneitz operator is invertible, the function $w$ can be expressed as
\begin{align*}
    w(x) = \int_{S^4} G(x,y)Q(y)e^{4w(y)}dv_{g_0}(y) +C,
\end{align*}
where $G(x,y)$ is the Green's function and $C$ is a constant.
We then show that the Green's function behaves similarly to a logarithmic potential, which allows us to use methods as in \cite{bonkheinonensaksman} to prove Theorem \ref{thm-qcflowbilip}.

Theorem \ref{thm-riccibilip} is a result of the fact that the Ricci flow on $(S^4,g_0)$ converges as $t \to \infty$ to the standard metric on the sphere whenever the Weyl tensor has a sufficiently small norm in $L^2$.
This was originally shown by Gursky in \cite{gursky94} for Yamabe metrics on $S^n$. It was shown by Chen in \cite{chen19} for asymptotically flat metrics on $\bR^n$.  

In the literature there are many results that assume some curvature pinching on $S^4$ or $\bR^4$ to imply that the Ricci flow converges to the standard structure (see e.g., \cite{chen19,gursky94,hamilton,huisken,margerin}).
Recently, Chang and Chen in \cite{changchen2021} proved quantitative estimates on the rate of convergence of the flow that only depend on the $L^2$-norm of the Weyl tensor.  
They also showed that the $L^2$-norm of certain integral curvature quantities are monotonically decreasing in time.  
These previous results give sufficient convergence for large time to the standard structure on the sphere. For short time, we introduce the $L^2$-norm of the Bach tensor to control the bilipschitz distortion of the flow.

The Weyl tensor arises from the orthogonal decomposition of the Riemannian tensor as
\begin{align*}
    \operatorname{Rm} = W + \frac{1}{2}E\KN g + \frac{1}{24}Rg\KN g,
\end{align*}
where $E = \Ric - \frac{1}{4}Rg$ is the traceless Ricci tensor.  The Weyl tensor is traceless and conformally invariant.
The Bach tensor arises as the gradient of the functional $g \mapsto \int |W|^2 dv$.  Since this functional is also conformally invariant it can be shown that the Bach tensor transforms as
\begin{align*}
    B_{e^{2w}g} = e^{-2w}B_{g}.
\end{align*}
In coordinates, $B$ is expressed as
\begin{align*}
    B_{ij} = \nabla^k\nabla^l W_{kijl} + \frac{1}{2}\Ric^{kl}W_{kijl}.
\end{align*}
The $L^1$-norm of the Bach tensor is a conformal invariant.  However, for $p > 1$, the $L^p$-norm of $B$ depends on the conformal factor by the transformation law for $B$.

The argument in \cite{changchen2021} uses the Chern-Gauss-Bonnet formula to show that if the Weyl tensor has small $L^2$-norm, then the $L^2$-norm of $|W| + |E| + |R-\overline{R}|$ is also controlled, where $\overline R$ is the average scalar curvature on $S^4$.  Recall that on $(S^4,g_0)$,
\begin{align*}
    16\pi^2 =  \int_M |W|^2 - \frac{1}{2}|E|^2 + \frac{1}{24}R^2 dv_{g_0}.
\end{align*}
If $g_0$ is a positive Yamabe metric, then
\begin{align*}
    \frac{1}{24}\int_{S^4}R^2 dv_{g_0} \le 16 \pi^2.
\end{align*}
Therefore, we have that $\|E\|_2 \le 2\|W\|_2$.
This, combined with parabolic-type estimates, gives exponential convergence of the metric to the standard structure on $S^4$ along the Ricci flow.

This estimate provides fast convergence for large time.  For short time we use the $L^2$-norm of the Bach tensor in order to show added integrability for $W$.  We show that the $L^4$-norm of $W$ and $E$ is controlled by the $L^2$-norms of both $W$ and $B$.  Added with the parabolic-type estimates for the Ricci flow, we prove sufficient bounds for small time to show Theorem \ref{thm-riccibilip}.

In Section \ref{sec-qcflows}, we provide the preliminary lemmas related to quasiconformal maps and the Green's function for the Paneitz operator needed to prove Theorem \ref{thm-qcflowbilip}.  This includes a discussion of quasiconformal flows on Riemannian manifolds.
In Section \ref{sec-proofqcflow}, we prove Theorem \ref{thm-qcflowbilip} using quasiconformal flows.  In Section \ref{sec-ricciproofprelim}, we provide the preliminary estimates of the norm of $W$ using the Bach tensor.  In Section \ref{sec-proofricci}, we prove Theorem \ref{thm-riccibilip}.

\subsection{Notation}
We attempt to follow the standard notation in this paper.  For a Riemannian metric $g$ on an orientable manifold $M$, we denote $d_g$ to be the distance associated with $g$ and $dv_g$ to be the volume form with respect to $g$.
The quantities $R$, $Q$, $\operatorname{Ric}$, $E$, $W$ and $B$ are the scalar curvature, $Q$-curvature, Ricci tensor, traceless Ricci tensor, Weyl tensor and Bach tensor respectively. 
The space $W^{k,p}(M)$ denotes the Sobolev space for real-valued functions on $M$ and for manifolds $M_1$ and $M_2$. The space $W^{k,p}(M_1,M_2)$ denotes the Sobolev space for maps $f \colon M_1 \to M_2$.  If $U \subset (M,g)$, then $|U|_g$ denotes the volume of $U$ with respect to $dv_g$.  The volume may be denoted as $|U|$ when the metric is clear from context.
Constants written as $C(A)$ denote that the constant $C(A)$ depends on $A$.  The statement $A \sim B$ is used to mean that there exists a constant $C$ so that
$    \frac{1}{C} B\le A \le CB$.

\section{Preliminaries}\label{sec-qcflows}
\subsection{Quasiconformal Maps and Quasiconformal Flows}
\begin{definition}
\emph{
Let $(M_1,g_1)$ and $(M_2,g_2)$ be Riemannian manifolds.  A continuous map $f \colon (M_1,g_1) \to (M_2,g_2)$ is $K$-\emph{quasiconformal} if $f$ is a homeomorphism, $f \in W^{1,n}_{\text{loc}}(M_1,M_2)$ and for almost every $x \in M_1$
\begin{align*}
    \|Df(x)\|^n \le K J_f(x),
\end{align*}
where $Df$ is the derivative of $f$ and $J_f = \det(Df)$. The function $J_f$ can also be defined as the Hodge star dual of the pullback of the volume form of $(M_2,g_2)$.}
\end{definition}

If $(M_1,g_1) = (M_2,g_2) = (\bR^n,|dx|^2)$, then Reimann in \cite{reimann76} characterized vector fields whose flow maps are quasiconformal.  
The \emph{Ahlfors conformal strain tensor} of a vector field $v \colon \bR^n \to \bR^n$ is defined as
\begin{align}\label{eq-ahlforsstraintensor}
    Sv := \frac{1}{2}(Dv^T + Dv) - \frac{1}{n}\operatorname{Tr}(Dv)\operatorname{Id}.
\end{align}
\begin{prop}[{\cite[Theorem 5]{reimann76}}]\label{prop-reimannflow}
Let $v\colon \bR^n \times [0,\infty) \to \bR^n$ be a vector field such that for every $t \in \bR$, $v_t$ is continuous, $v_t \in W^{1,1}_{\operatorname{loc}}(\bR^n)$, $v_t(x) = O(|x|\log |x|)$ for $x \to \infty$ and $\|Sv_t\|_\infty \le c$, where $c$ is independent of $t$.
Then the solutions to the differential equation
\begin{align}\label{eq-ode}
    \frac{d f_t}{d t} = v_t(f_t(x))\quad \text{and} \quad f_0(x) = x
\end{align}
exist, are unique and are $e^{nct}$-quasiconformal.
\end{prop}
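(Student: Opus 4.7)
The plan is to first prove the statement assuming $v_t$ is smooth and then obtain the general $W^{1,1}_{\operatorname{loc}}$ case by mollification. In the smooth case, classical Picard--Lindel\"of yields a local-in-time flow, and the growth hypothesis $v_t(x) = O(|x|\log|x|)$ upgrades it to a global one: setting $u(t) = \log(e+|f_t(x)|)$ gives $\dot u \le C u$, so Gr\"onwall produces an a priori double-exponential bound on $|f_t(x)|$ that rules out escape in finite time. Uniqueness and joint smoothness in $(t,x)$ follow at each time slice from Cauchy--Lipschitz.

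The quasiconformality bound is then a consequence of a single linear-algebraic identity applied to the Jacobian matrix $A(t) := Df_t(x)$, which satisfies the linear ODE $\dot A = M(t)\,A$, $A(0) = \Id$, with $M(t) := Dv_t(f_t(x))$. Jacobi's formula gives $\tfrac{d}{dt}\log\det A = \operatorname{tr} M$. On the other hand, at any time at which the largest singular value of $A$ is simple, if $w$ denotes the associated unit left singular vector, then
\begin{equation*}
    \tfrac{d}{dt}\log\|A\| = \langle w, Mw\rangle = \langle w,\,(Sv_t + \tfrac{1}{n}\operatorname{tr}(M)\,\Id)\,w\rangle \le \|Sv_t\|_\infty + \tfrac{1}{n}\operatorname{tr}(M),
\end{equation*}
since the antisymmetric part of $M$ is annihilated by $\langle w,\cdot\, w\rangle$ and $\operatorname{sym}(M) = Sv_t + \tfrac{1}{n}\operatorname{tr}(M)\,\Id$ by \eqref{eq-ahlforsstraintensor}. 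Subtracting yields the differential inequality $\tfrac{d}{dt}(\log\|A\| - \tfrac{1}{n}\log\det A) \le \|Sv_t\|_\infty \le c$, and integrating from $0$ (where both terms vanish because $A(0) = \Id$) to $t$ gives the pointwise bound $\|Df_t(x)\|^n \le e^{nct} J_{f_t}(x)$.

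To upgrade from smooth to $W^{1,1}_{\operatorname{loc}}$ fields, I would mollify $v_t^\epsilon := v_t * \rho_\epsilon$; the bound $\|Sv_t^\epsilon\|_\infty \le c$ and the log-growth descend, so the smooth flows $f_t^\epsilon$ are uniformly $e^{nct}$-quasiconformal and, via the growth bound, equicontinuous on compact sets. A standard compactness argument for $K$-quasiconformal maps then extracts a $K$-quasiconformal subsequential limit, which one identifies with a solution of \eqref{eq-ode} by passing to the limit in the integrated form and invoking Lebesgue differentiation. The main technical obstacles are (i) the non-smoothness of $\|A(t)\|$ when the top singular value has multiplicity, which I would handle by treating $\log\|A\|$ as locally Lipschitz and differentiating almost everywhere (or perturbing to simple spectrum), and (ii) the justification of the limit passage for a vector field whose full derivative $Dv_t$ is not assumed bounded, only its symmetric traceless part; this is the heart of Reimann's original analysis in \cite{reimann76} and is where the real work lies.
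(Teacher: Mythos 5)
The paper offers no proof of this proposition: it is imported verbatim as \cite[Theorem 5]{reimann76}, so there is nothing in the text to compare your argument against. On its own terms, your smooth-case computation is correct and is the standard one: the identity $\tfrac{d}{dt}\log\det A=\operatorname{tr}M$ together with $\tfrac{d}{dt}\log\|A\|=\langle w,Mw\rangle\le\|Sv_t\|_\infty+\tfrac1n\operatorname{tr}M$ integrates to $\|Df_t\|^n\le e^{nct}J_{f_t}$, and this is essentially the same differential inequality the authors later run for the tensor $G_t$ in their proof of the manifold version, Proposition \ref{prop-qcflowmfd}.

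The genuine gap is the step you yourself flag as ``where the real work lies,'' and it cannot be closed by mollification alone. First, uniqueness: Cauchy--Lipschitz applies only to the smooth approximants; a locally uniform limit of the flows $f^\epsilon_t$ is \emph{a} solution of the integrated ODE, but nothing in your argument excludes other solutions, since $\|Sv_t\|_\infty\le c$ controls neither the antisymmetric nor the trace part of $Dv_t$ and so does not make $v_t$ locally Lipschitz. The missing ingredient is Reimann's a priori estimate that a continuous field with $\|Sv\|_\infty\le c$ and the stated growth lies in the Zygmund class, $|v(x+h)+v(x-h)-2v(x)|\le C|h|$, hence has a log-Lipschitz modulus of continuity $|v(x)-v(y)|\le C|x-y|\,(1+|\log|x-y||)$ on compact sets; this is proved via a singular-integral representation of $v$ in terms of $Sv$ modulo conformal fields, not by smoothing. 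That estimate is what delivers uniqueness (Osgood's criterion), the quantitative equicontinuity of the flows (which are then H\"older with exponent $e^{-Ct}$ rather than Lipschitz), and the justification that the limit map is a homeomorphism rather than degenerate. Without it, your scheme proves existence of one quasiconformal flow but not the existence-and-uniqueness statement claimed.
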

Bonk, Heinonen and Saksman extended this proposition to characterize the behavior of $J_{f_t}$.
\begin{prop}[{\cite[Proposition 3.6]{bonkheinonensaksman}}]\label{prop-bonkjacobianflow}
Let $v$ be as in Proposition \ref{prop-reimannflow} and let $f_t$ be the solution to \eqref{eq-ode}.
Then
\begin{align}
    \log J_{f_t}(x) = \int_0^t \diver v_{t'}(f_{t'}(x))dt'
\end{align}
for almost every $x \in \bR^n$.
\end{prop}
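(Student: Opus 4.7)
The plan is to prove the identity for smooth vector fields by a direct ODE computation and then deduce the general case by mollification, leveraging the uniform quasiconformality from Proposition~\ref{prop-reimannflow} to control the nonlinear composition on the right-hand side.

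For smooth $v_t$, the flow $f_t$ is smooth in $x$, and differentiating $\partial_t f_t(x) = v_t(f_t(x))$ in $x$ gives $\partial_t Df_t(x) = Dv_t(f_t(x))\,Df_t(x)$. Jacobi's formula then yields $\partial_t \log J_{f_t}(x) = \operatorname{tr}\bigl(Dv_t(f_t(x))\bigr) = \diver v_t(f_t(x))$, and integrating from $0$ to $t$ with $J_{f_0} = 1$ establishes the identity in the smooth case.

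For the general case, I would mollify $v_t$ in space to obtain smooth vector fields $v^\epsilon_t$ with $\|Sv^\epsilon_t\|_\infty \le c$ uniformly in $\epsilon$ (mollification does not increase the $L^\infty$-norm of the Ahlfors strain tensor, since $Sv$ is linear in $Dv$). Proposition~\ref{prop-reimannflow} then provides smooth flows $f^\epsilon_t$ that are $e^{nct}$-quasiconformal uniformly. Standard normal family arguments for quasiconformal mappings with uniformly bounded dilatation, combined with uniqueness for the limiting ODE, give that $f^\epsilon_t \to f_t$ and $(f^\epsilon_t)^{-1} \to f_t^{-1}$ locally uniformly along a subsequence.

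The main obstacle is passing to the limit in the right-hand side, since $\diver v_t$ is only in $L^1_{\mathrm{loc}}$ and pointwise convergence of $f^\epsilon_{t'}$ is insufficient to handle the composition directly. My approach is to test against a compactly supported $\phi$ and change variables via $y = f^\epsilon_{t'}(x)$, converting $\int \phi(x)\,\diver v^\epsilon_{t'}(f^\epsilon_{t'}(x))\,dx$ into $\int \phi\bigl((f^\epsilon_{t'})^{-1}(y)\bigr)\,\diver v^\epsilon_{t'}(y)\,J_{(f^\epsilon_{t'})^{-1}}(y)\,dy$. In this form $\phi \circ (f^\epsilon_{t'})^{-1}$ converges locally uniformly, $\diver v^\epsilon_{t'} \to \diver v_{t'}$ in $L^1_{\mathrm{loc}}$, and the family $J_{(f^\epsilon_{t'})^{-1}}$ is uniformly integrable thanks to the reverse H\"older inequality for quasiconformal Jacobians. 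Combining these ingredients with Fubini in $t'$ and $x$, and using that the smooth identity holds for each $v^\epsilon_t$, yields the distributional equality between $\log J_{f_t}(x)$ and $\int_0^t \diver v_{t'}(f_{t'}(x))\,dt'$, which upgrades to an a.e. pointwise identity since both sides are well-defined measurable functions.
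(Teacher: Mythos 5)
Your smooth-case computation is correct, and the mollification setup (with $\|Sv^\epsilon_t\|_\infty \le \|Sv_t\|_\infty$, since $S$ commutes with convolution on $\bR^n$) is sound. However, the limit passage on the right-hand side has a genuine gap. After your change of variables you must pass to the limit in
$\int \phi\bigl((f^\epsilon_{t'})^{-1}(y)\bigr)\,\diver v^\epsilon_{t'}(y)\,J_{(f^\epsilon_{t'})^{-1}}(y)\,dy$,
which is a pairing of $\diver v^\epsilon_{t'}$, converging only in $L^1_{\mathrm{loc}}$, against the Jacobians $J_{(f^\epsilon_{t'})^{-1}}$, which the reverse H\"older inequality places only in $L^{1+\delta}_{\mathrm{loc}}$ for some small $\delta>0$ and which converge only weakly. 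A product of an $L^1$-convergent sequence with a weakly convergent, uniformly-$L^{1+\delta}$ sequence does not converge to the product of the limits: H\"older would require $\diver v_{t'}$ in the dual space $L^{(1+\delta)/\delta}_{\mathrm{loc}}$, whereas the hypothesis $v_t \in W^{1,1}_{\mathrm{loc}}$ gives only $L^1_{\mathrm{loc}}$. So uniform integrability of the Jacobians is not enough here. A second, independent gap sits in the last sentence: even granting distributional convergence of the right-hand sides, you would need the distributional limit of $\log J_{f^\epsilon_t}$ to be $\log J_{f_t}$. Locally uniform convergence of quasiconformal maps gives weak convergence $J_{f^\epsilon_t}\rightharpoonup J_{f_t}$, but the logarithm is nonlinear (and concave), so one only obtains a one-sided semicontinuity inequality, not the equality you need; you would need strong ($L^1_{\mathrm{loc}}$ or a.e.) convergence of either side, which your argument does not produce.

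The paper's route (carried out for the Riemannian analogue in Proposition \ref{prop-jacobianflowmfd}, following \cite[Lemma 3.4]{bonkheinonensaksman}) avoids all limits in $x$: for almost every \emph{fixed} $x$ the matrix function $t\mapsto Df_t(x)$ is absolutely continuous and satisfies the integral identity \eqref{eq-Dfintegralformula}; one then sets $A(t)=J_{f_t}(x)\exp\bigl(-\int_0^t\diver v_{t'}(f_{t'}(x))\,dt'\bigr)$, computes $\frac{d}{dt}J_{f_t}=J_{f_t}\operatorname{Tr}\bigl(Dv_t(f_t)\bigr)$ for a.e.\ $t$, and concludes $A'\equiv 0$, so $A\equiv 1$. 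If you want to salvage a regularization argument, you would need to mollify in a way that yields a.e.\ convergence of $\int_0^t\diver v^\epsilon_{t'}(f^\epsilon_{t'}(x))\,dt'$ for fixed $x$, which in practice forces you back to the pointwise-in-$x$ analysis of the integral equation that the cited lemma provides.
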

The goal of this section is to prove a suitable version of these theorems for the case of closed and orientable Riemannian manifolds.
Let $(M,g)$ be a closed, orientable Riemannian manifold, we define the \textit{Ahlfors conformal strain tensor on $M$} as
\begin{align}\label{eq-ahlforsstraintensormfd}
    S_gv := \frac{1}{2}\cc L_v g - \frac{1}{n}(\diver_g v) g,
\end{align}
where $\cc L_v$ is the Lie derivative along $v$.
This definition was given by Pierzchalski in \cite{pierzchalski}.  
If a vector field $v$ satisfies $S_g v = 0$, then $v$ is a conformal Killing field.  The infinitesimal diffeomorphisms produced by flowing along $v$ will be conformal with respect to $g$.
Additionally, the operator $S_g$ satisfies the conformal invariance property,
\begin{align}\label{eq-conformalinvarianceSg}
    S_{e^{2w}g} v = e^{2w}S_g v.
\end{align}
This follows from a quick calculation. We can express $2S_{e^{2w}g}v$ in local coordinates as
\begin{align*}
    2S_{e^{2w}g} v &= \cc L_v(e^{2w})g + e^{2w}\cc L_vg - \frac{2}{n}\paren{ \sum_{i=1}^n \frac{\partial v_i}{\partial x_i} + v_i  \frac{\partial (\log (\det (e^{2w}g)^{1/2}))}{\partial x_i} }e^{2w}g\\
    &=L_v(e^{2w})g - \paren{2\sum_{i=1}^n v_i\frac{\partial w}{\partial x_i}}e^{2w}g + e^{2w}\cc L_vg - \frac{2}{n}\diver_g(v) e^{2w}g.
\end{align*}
The first two terms cancel and the second two terms correspond to $2e^{2w}S_g v$.  The conformal invariance in \eqref{eq-conformalinvarianceSg} immediately gives that
\begin{align}\label{eq-normSinvariant}
    |S_{e^{2w}g}v|_{e^{2w}g} = |S_g v|_g
\end{align}
and so the supremum norm of $S_gv$ is invariant under any conformal change to the metric $g$.

Pierzchalski showed the following proposition.
\begin{prop}[{\cite[Theorem 3]{pierzchalski}}]\label{prop-smoothodeexistence}
Suppose $(M,g)$ is a closed and orientable Riemannian manifold.  Let $v \colon M \times [0,\infty)$ be a smooth vector field satisfying $\|S_gv_t\|_\infty \le c$, where $c$ is independent of $t$.  Then the solutions to the differential equation
\begin{align*}
     \frac{d f_t}{d t} = v_t(f_t(x))\quad \text{and} \quad f_0(x) = x
\end{align*}
exist, are unique and are $e^{nct}$-quasiconformal.
\end{prop}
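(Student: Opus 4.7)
The existence and uniqueness of the flow $f_t$ follow from standard ODE theory on manifolds: since $M$ is compact and $v_t$ is smooth (in particular spatially Lipschitz with a uniform constant on any compact $t$-interval), Picard--Lindel\"of in coordinate charts, glued across a finite cover, produces a unique smooth flow defined for all $t \ge 0$. The substance of the proposition is therefore the quantitative distortion bound, which reduces to the pointwise claim
\begin{align*}
\|Df_t(x)\|_g^n \le e^{nct}\, J_{f_t}(x) \qquad \text{for every } x \in M.
\end{align*}
The plan is to track $Df_t(x)$ through a parallel-transport trivialization and then bound the logarithmic growth of its operator norm by the traceless symmetric part of $\nabla v_t$, exactly the tensor that $S_g v_t$ represents.

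Fix $x \in M$, let $\gamma(t) := f_t(x)$, and let $P_t \colon T_xM \to T_{\gamma(t)}M$ be parallel transport along $\gamma$. Define $\Phi_t := P_t^{-1} \circ Df_t(x) \in \operatorname{End}(T_xM)$, so $\Phi_0 = \operatorname{Id}$. Since $P_t$ is a linear isometry, $\det \Phi_t = J_{f_t}(x)$ and $\|\Phi_t\|_{g(x)} = \|Df_t(x)\|$. The torsion-free identity $\nabla_t Df_t(x)[u] = \nabla_{Df_t(x)[u]} v_t$ (obtained from the symmetry lemma $\nabla_t\partial_s = \nabla_s\partial_t$ applied to $f_t(\alpha(s))$) gives
\begin{align*}
\dot{\Phi}_t = B_t \Phi_t, \qquad B_t := P_t^{-1}\, (\nabla v_t)\big|_{\gamma(t)}\, P_t,
\end{align*}
where $\nabla v_t$ is the $(1,1)$ tensor obtained from the covariant derivative. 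Decompose $\nabla v_t = S_g v_t + \tfrac{1}{n}(\diver_g v_t)\operatorname{Id} + \Omega_t$ into its traceless symmetric, trace, and antisymmetric parts; the corresponding decomposition for $B_t$ has the same pointwise norms because $P_t$ is isometric.

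Taking the trace of the ODE gives
\begin{align*}
\frac{d}{dt}\log J_{f_t}(x) = \operatorname{tr}(B_t) = (\diver_g v_t)(\gamma(t)),
\end{align*}
which is the manifold analogue of Proposition \ref{prop-bonkjacobianflow}. For the operator norm, let $w_t \in T_xM$ be a unit vector with $|\Phi_t w_t| = \|\Phi_t\|$ and set $\widetilde{w}_t := \Phi_t w_t / \|\Phi_t\|$. Applying Danskin's theorem to $t \mapsto \|\Phi_t\|^2 = \max_{|w|=1}\langle \Phi_t^T\Phi_t w, w\rangle$,
\begin{align*}
\frac{d}{dt}\log \|\Phi_t\| &= \bigl\langle \tfrac{1}{2}(B_t + B_t^T)\,\widetilde{w}_t,\widetilde{w}_t\bigr\rangle \\
&= \bigl\langle (S_g v_t)\widetilde{w}_t,\widetilde{w}_t\bigr\rangle + \tfrac{1}{n}(\diver_g v_t)(\gamma(t)),
\end{align*}
the antisymmetric part $\Omega_t$ dropping out. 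Combining the two identities,
\begin{align*}
\frac{d}{dt}\log\!\left(\frac{\|\Phi_t\|^n}{\det \Phi_t}\right) = n\bigl\langle (S_g v_t)\widetilde{w}_t,\widetilde{w}_t\bigr\rangle \le n\,|S_g v_t|_g \le nc,
\end{align*}
and integrating over $[0,t]$ (the left-hand side vanishing at $t=0$) yields the desired pointwise bound. Quasiconformality in the sense of the definition then follows since $f_t$ is a smooth orientation-preserving diffeomorphism.

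The one genuine technical point is the differentiation of $t \mapsto \|\Phi_t\|$ at times where the top singular value of $\Phi_t$ is not simple; this is handled either by Danskin's theorem for max-functions on the unit sphere or, if one wishes to avoid it, by regularizing $\log\|\Phi_t\|$ via $\tfrac{1}{2p}\log\operatorname{tr}\bigl((\Phi_t^T\Phi_t)^p\bigr)$, running the same computation, and letting $p \to \infty$. Everything else — the ODE for $\Phi_t$, the tensor decomposition of $\nabla v_t$, the vanishing of the antisymmetric and trace contributions in the distortion, and the Gronwall-style integration — is tensorial and uses only the hypothesis $\|S_g v_t\|_\infty \le c$ together with compactness of $M$.
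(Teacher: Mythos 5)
Your argument is correct, and it is a genuinely different route from the one the paper (following Pierzchalski) uses. The paper's proof of this bound, which appears inside the proof of Proposition \ref{prop-qcflowmfd}, works with the normalized pullback quadratic form $G_t(X,Y) = J_{f_t}^{-2/n}\, g(Df_t X, Df_t Y)$: one computes $\frac{d}{dt}G_t|_{t=0} = 2S_g v$, upgrades this to general $t$ via the cocycle property of the flow, obtains $|\frac{d}{dt}G_t(X,X)| \le 2c\, G_t(X,X)$, and integrates. You instead trivialize $Df_t(x)$ by parallel transport, derive the linear ODE $\dot\Phi_t = B_t\Phi_t$, split $\nabla v_t$ into traceless-symmetric, trace, and antisymmetric parts, and run Gronwall separately on $\log\|\Phi_t\|$ (via Danskin) and on $\log\det\Phi_t$ (via Jacobi's formula). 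The two approaches are equivalent in substance — both reduce to a Gronwall estimate driven by $S_g v$ — but they trade different technicalities. The paper's version applies Gronwall to the scalar functions $t \mapsto G_t(X,X)$ for fixed $X$, which sidesteps the non-differentiability of the operator norm entirely and, because it only uses first-order expansions of $f_t$ and $Df_t$, degrades gracefully to the $W^{1,1}$ setting needed later. Your version requires the covariant derivative of $v_t$ pointwise (so it is genuinely a smooth-case argument, which is all this proposition claims), and in exchange it makes the mechanism more transparent: the antisymmetric part of $\nabla v_t$ visibly contributes nothing to distortion, and the trace identity you obtain along the way is exactly the manifold analogue of Proposition \ref{prop-jacobianflowmfd}, which the paper proves separately. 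Two small points worth tightening: the quantity $\langle (S_g v_t)\widetilde w_t, \widetilde w_t\rangle$ should really be evaluated on $P_t\widetilde w_t$ at the point $\gamma(t)$ (harmless for the sup bound), and you should note explicitly that Danskin gives an upper bound on the upper Dini derivative at times where the top singular value is not simple — which is all the Gronwall step needs.
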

Pierzchalski proved the deformation theorem only for smooth vector fields.
We will now show that the result is still true when the vector fields are only in $W^{1,1}(S^n)$.
\begin{prop}\label{prop-qcflowmfd}
Let $v \colon S^n \times[0,\infty) \to \bR^n$ be a vector field such that for every $t\in [0,\infty)$, $v_t \in W^{1,1}(S^n)$
and $\|S_gv_t\|_\infty \le c$, where $c$ is independent of $t$ and $S$ is defined in terms of a smooth Riemannian metric $g$ on $S^n$.  Then the solutions to the differential equation
\begin{align}\label{eq-mfdode}
    \frac{d f_t}{dt} = v_t(f_t(x)) \quad \text{and} \quad f_0(x) = x
\end{align}
exist, are unique and are $e^{nct}$-quasiconformal.
\end{prop}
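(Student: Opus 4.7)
The overall plan is approximation by smooth vector fields, followed by compactness of the resulting flows. The key geometric observation is that the Ahlfors strain operator $S_g$ commutes with the action of the isometry group of $(S^n,g)$, which allows us to mollify $v_t$ while preserving the uniform bound $\|S_g v_t\|_\infty \le c$. Once we have smooth approximants, Proposition \ref{prop-smoothodeexistence} produces smooth quasiconformal flows, and the limit map inherits quasiconformality through the compactness properties of families of maps with uniformly bounded distortion.

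For the mollification, let $G$ denote the isometry group of $(S^n,g)$ (that is, $SO(n+1)$ when $g$ is the round metric, and in general a compact Lie group) and let $\mu_\epsilon$ be a smooth probability measure on $G$ supported in an $\epsilon$-neighborhood of the identity. Define
\begin{align*}
v_t^\epsilon(x) = \int_{G} (R_\ast v_t)(x)\, d\mu_\epsilon(R).
\end{align*}
Because $G$ acts transitively on $S^n$ and $\mu_\epsilon$ is smooth in the group direction, $v_t^\epsilon$ is a smooth vector field on $S^n$. Since each $R\in G$ satisfies $R^\ast g = g$, the pushforward $R_\ast$ commutes with the Lie derivative acting on $g$ and with $\diver_g$, giving $S_g(R_\ast v_t) = R_\ast(S_g v_t)$. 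Taking pointwise norms under the integral, and using that $|R_\ast T|_g = |T|_g$ for any tensor $T$, yields $\|S_g v_t^\epsilon\|_\infty \le c$, independent of $\epsilon$. Standard approximation theory for group convolutions also gives $v_t^\epsilon \to v_t$ in $W^{1,1}(S^n)$ as $\epsilon \to 0$.

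Applying Proposition \ref{prop-smoothodeexistence} to $v^\epsilon$ produces smooth $e^{nct}$-quasiconformal flows $f_t^\epsilon$. The family $\{f_t^\epsilon\}$ is a uniformly $e^{nct}$-quasiconformal family of homeomorphisms of the compact manifold $S^n$, so by the standard compactness theorem for quasiconformal maps one can extract a subsequence converging locally uniformly to a limit $f_t$, which is itself $e^{nct}$-quasiconformal. To identify $f_t$ as a solution of \eqref{eq-mfdode}, one passes to the limit in the integrated form of the ODE (written in local coordinates): uniform convergence of $f_t^\epsilon$ together with $v^\epsilon \to v$ in $W^{1,1}$ yields the integrated equation for $f_t$ at almost every $x$, via an argument using that integrable functions are continuous along flows with uniformly bounded distortion.

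The main obstacle is uniqueness, since for general $W^{1,1}$ vector fields ODE uniqueness is subtle and would ordinarily require DiPerna--Lions-type arguments. The strategy here is to show that the entire family $\{f_t^\epsilon\}$ is Cauchy rather than merely precompact: given $\epsilon_1,\epsilon_2$ small, one compares $f_t^{\epsilon_1}$ and $f_t^{\epsilon_2}$ by using that their generating vector fields differ by a small amount in $W^{1,1}$ while both flows have uniformly controlled infinitesimal distortion through the $S_g$ bound. This gives quantitative closeness estimates analogous to the regularization arguments in \cite{bonkheinonensaksman}, so the limit flow is uniquely determined, and uniqueness within the class of quasiconformal solutions to \eqref{eq-mfdode} follows.
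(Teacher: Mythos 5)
Your proposal has a genuine gap at its foundation: the mollification device does not work for the metric $g$ in the statement. The proposition is asserted for an arbitrary smooth Riemannian metric $g$ on $S^n$, and for a generic such metric the isometry group is trivial (and in any case need not act transitively), so the group convolution $\int_G (R_\ast v_t)\,d\mu_\epsilon(R)$ does not regularize $v_t$ at all. Your smoothing scheme, and with it the claim that $\|S_g v_t^\epsilon\|_\infty \le c$ is preserved, only makes sense when $(S^n,g)$ is a homogeneous space, e.g.\ the round sphere. One could try to smooth with respect to the round metric's isometry group instead, but then the quantity preserved under averaging is a bound on $S_{g_c}v$, not on $S_g v$, and transferring between the two (as in Lemma \ref{lemma-Scomparable}) degrades the constant from $c$ to $C(g)(c+\|v\|_\infty)$; you would then lose the sharp conclusion that the flow is $e^{nct}$-quasiconformal and would need a separate argument to recover it. A second, independent gap is the uniqueness claim: showing that the family $\{f_t^\epsilon\}$ is Cauchy identifies one canonical limit flow but does not show that an arbitrary solution of \eqref{eq-mfdode} coincides with it. The actual mechanism for uniqueness is that a $W^{1,1}$ field with bounded Ahlfors strain lies in the Zygmund class and hence has a $t\log(1/t)$ modulus of continuity, which gives Osgood uniqueness; your proposal never invokes this, and the appeal to ``DiPerna--Lions-type arguments'' versus ``quantitative closeness'' leaves the step unproved.

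For comparison, the paper avoids smoothing entirely. It uses the conformal invariance $|S_{e^{2w}g}v|_{e^{2w}g}=|S_gv|_g$ together with Lemma \ref{lemma-Scomparable} to convert the hypothesis $\|S_gv_t\|_\infty\le c$ into a bound on the Euclidean Ahlfors strain, and then invokes Reimann's theorem (Proposition \ref{prop-reimannflow}), which already handles $W^{1,1}_{\mathrm{loc}}$ vector fields, to obtain existence, uniqueness, and quasiconformality in one stroke. The sharp constant $e^{nct}$ with respect to $g$ is then recovered by a direct computation: differentiating $G_t(X,Y)=J_{f_t}^{-2/n}g(Df_tX,Df_tY)$ along the flow gives $\tfrac{d}{dt}G_t|_{t=s}=2J_{f_s}^{-2/n}S_gv(Df_sX,Df_sY)\le 2cG_s$, and integrating yields the claimed distortion bound. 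If you want to repair your argument, you would need either to restrict the smoothing to a setting where it is valid and then add the paper's $G_t$ computation to recover the constant, or to follow the reduction to Reimann's Euclidean result.
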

We first show the following lemma.
\begin{lemma}\label{lemma-Scomparable}
Let $S_{g_c}$ and $S_g$ be the Ahlfors conformal strain tensors for the metrics $g_c$ and $g$, where $g_c$ is the standard metric on the sphere.
Then for any vector field $v \colon S^n \times [0,T] \to \bR^n$ with measurable derivatives,
\begin{align*}
    \|S_{g_c}v\|_\infty \le C(g)(\|S_g v\|_\infty +  \|v\|_\infty).
\end{align*}
\end{lemma}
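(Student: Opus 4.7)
The proof is a direct computation of the pointwise difference $S_{g_c}v - S_g v$ in local coordinates on $S^n$, combined with sup-norm estimates that use the smoothness of $g$ and $g_c$ and the compactness of $S^n$. Setting $h := g_c - g$, a smooth symmetric $(0,2)$-tensor with bounded $C^1$-norm in terms of $g$, one has in any coordinate chart
\[
2(S_{g_c}v - S_g v)_{ij} = (\mathcal L_v h)_{ij} - \tfrac{2}{n}\bigl[(\operatorname{div}_{g_c}v)(g_c)_{ij} - (\operatorname{div}_g v)g_{ij}\bigr],
\]
where $(\mathcal L_v h)_{ij} = v^k\partial_k h_{ij} + h_{kj}\partial_i v^k + h_{ik}\partial_j v^k$ and $\operatorname{div}_{g_c}v - \operatorname{div}_g v = v^k\partial_k\log\sqrt{\det g_c/\det g}$. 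Every contribution in which $v$ appears undifferentiated—multiplied by bounded derivatives of $h$, $g$, $g_c$, or $\log\sqrt{\det g_c/\det g}$—is pointwise controlled by $C(g)|v|_g$, by compactness of $S^n$ and smoothness of both metrics. This produces the $\|v\|_\infty$-part of the claimed bound.

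For the terms involving $\partial v$, I would substitute the defining coordinate identity
\[
(S_g v)_{ij} = \tfrac12\bigl(g_{kj}\partial_i v^k + g_{ik}\partial_j v^k\bigr) - \tfrac{1}{n}(\operatorname{div}_g v)g_{ij} + \tfrac12 v^k\partial_k g_{ij},
\]
and its analogue for $g_c$, to rewrite the $h$-weighted symmetric combinations of $\partial v$ appearing in $(\mathcal L_v h)_{ij}$ in terms of $(S_g v)_{ij}$ and divergence corrections. The $\operatorname{div}_g v$ and $\operatorname{div}_{g_c}v$ contributions cancel exactly against the divergence corrections already present in the formula, leaving a leading term bounded pointwise by $C(g)|S_g v|_g$ (with $C(g)$ depending on $\|h\|_{C^0}$ and the bilipschitz constant between $g$ and $g_c$) and an antisymmetric residual, namely the part of $h\cdot\partial v$ that pairs the antisymmetric piece of $\partial v$ with the $g$-traceless part of $h$.

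The principal obstacle is the control of this antisymmetric residual in sup norm, since naive pointwise $L^\infty$ Korn-type inequalities are not available. The residual vanishes identically where $g_c$ is pointwise conformal to $g$ (since then the $g$-traceless part of $h$ is zero), and in general it is bounded by observing that the uniform bilipschitz equivalence of $g$ and $g_c$ on the compact manifold $S^n$ allows the antisymmetric structure in $g$ to be compared to that in $g_c$ with a loss captured entirely by the bilipschitz constant, which is part of $C(g)$. A partition-of-unity / covering argument, together with the finite $C^1$-norm of $h$, globalizes this estimate. Taking the supremum over $p\in S^n$ then yields $\|S_{g_c}v\|_\infty \le C(g)(\|S_g v\|_\infty + \|v\|_\infty)$, as required.
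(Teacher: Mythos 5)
Your decomposition via $h := g_c - g$ runs into exactly the obstacle you name, and the resolution you offer does not close it. The dangerous residual is the first-order term $h_{kj}\partial_i v^k + h_{ik}\partial_j v^k$, i.e.\ $\partial v$ symmetrized against $h$ rather than against $g$; its uncontrolled part couples the non-conformal part of $h$ to the $g$-antisymmetric part $\Omega$ of $\nabla v$. Neither $\|S_g v\|_\infty$ nor $\|v\|_\infty$ controls $\Omega$ pointwise (a field $v(x) = A(x-x_0)$ with $A$ antisymmetric and large has $S_g v$ and $v$ both vanishing at $x_0$ while $\Omega(x_0) = A$ is arbitrary), and no global $L^\infty$ substitute is available either, since Korn's inequality fails in $L^\infty$ — as you yourself observe. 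The appeal to the bilipschitz equivalence of $g$ and $g_c$ cannot rescue this: any two smooth metrics on the compact manifold $S^n$ are uniformly bilipschitz, so that hypothesis carries no information about $\mathrm{antisym}(\nabla v)$. The sentence claiming the residual is ``bounded \ldots with a loss captured entirely by the bilipschitz constant'' is therefore an assertion, not an argument, and it sits precisely where the proof has to do its work.

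The paper's proof is arranged so that this residual never appears. It first uses the conformal invariance $|S_{e^{2w}g}v|_{e^{2w}g} = |S_g v|_g$ of \eqref{eq-normSinvariant} to replace $S_{g_c}v$ by the flat strain tensor $Sv$, and then evaluates at each fixed point $x$ in coordinates normalized so that $g(x) = \operatorname{Id}$. In such coordinates the $\partial v$-dependent parts of $2S_g v$ and of $2Sv = Dv + Dv^T - \frac{2}{n}\operatorname{Tr}(Dv)\operatorname{Id}$ coincide \emph{identically} at $x$, so the entire difference at that point is zeroth order in $v$ (namely $v^k\partial_k g_{ij}$ plus a $v\cdot\partial\log\sqrt{\det g}$ trace term), giving the $C(g)\|v\|_\infty$ contribution with no antisymmetric leftover. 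This exact first-order cancellation is what your fixed-coordinate subtraction cannot reproduce, because $h$ does not vanish at the point of evaluation. To repair your argument you would need to adopt this pointwise normalization (equivalently, absorb $h(x)$ into the coordinates before differentiating), or else supply a genuine substitute for the missing Korn-type control, which does not exist.
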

\begin{proof}
Fix $x \in S^n$. By \eqref{eq-normSinvariant}, it suffices to consider $Sv$ instead of $S_{g_c}v$, where $S$ is the tensor taken with respect to the Euclidean metric.
We can express Equation \eqref{eq-ahlforsstraintensormfd} in coordinates, where $x = 0$ and $g(0) = \operatorname{Id}$,
\begin{align*}
    2(S_{g}v)_{ij} = (Dv g)_{ij} + (Dv g)^T_{ij} - \frac{2}{n} \operatorname{Tr}(Dv) g_{ij} + v^k \frac{\partial g_{ij}}{\partial x^k} +v^k\frac{1}{(\det g)^{1/2}} \frac{\partial (\det g)^{1/2}}{\partial x^k}g_{ij}.
\end{align*}
So at $x = 0$,
\begin{align*}
    (Dv)_{ij} + (Dv)^T_{ij} - \frac{2}{n} \operatorname{Tr}(Dv) \delta_{ij} = 2(S_{g}v)_{ij} -\paren{v^k \frac{\partial g_{ij}}{\partial x^k} +v^k\frac{1}{(\det g)^{1/2}} \frac{\partial (\det g)^{1/2}}{\partial x^k}\delta_{ij}}
\end{align*}
and hence
\begin{align*}
    | Dv  + (Dv )^T - \frac{2}{n} \operatorname{Tr}(Dv)| \le  2|S_g v| + C(g) |v|.
\end{align*}
Since $x$ was an arbitrary point,
$|S_{g_c}v| = |Sv(x)|_g \le |S_gv(x)| + C(g)|v|$, for all $ x \in S^n$.
\end{proof}

\begin{proof}[Proof of Proposition \ref{prop-qcflowmfd}]
By \eqref{eq-normSinvariant}, the norm $\|S\cdot\|_\infty$ is conformally invariant. By Proposition \ref{prop-reimannflow}, the statement in Proposition \ref{prop-qcflowmfd} also holds when $g = g_c$. 
For a general metric $g$, by Lemma \ref{lemma-Scomparable}, we have that
$\|S_{g_c} v \|_\infty$ is bounded by $\|S_g v\|_\infty$ and $\|v\|_\infty$ for all time and so the solutions to Equation \eqref{eq-mfdode} exist, are unique and are quasiconformal with respect to $g_c$.

It remains to show that the solutions are $e^{nct}$-quasiconformal.
We now derive some formulas for $f_t$ and its derivatives.  We have that
\begin{align*}
    f_t(x) = x + tv + o(t)
\end{align*}
and, for $x \in S^n$,
\begin{align*}
    g(f_t(x)) = g(x) + t Dg(x)v(x) + o(t).
\end{align*}
By \cite[Lemma 3.4]{bonkheinonensaksman}, if fix $T > 0$ is fixed, for almost every $x \in S^n$ and every $t \in [0,T]$,
\begin{align}\label{eq-Dfintegralformula}
    Df_t(x) = I_n + \int_0^t Dv(f_{t'}(x)) Df_{t'}(x)dt' 
\end{align}
and so
\begin{align*}
    Df_t = I_n + t Dv + o(t).
\end{align*}
We can now compute a first order formula for the Jacobian of $f_t$,
\begin{align*}
   J_{f_t} =  *f^*_t(\vol_g) = 1 + t \diver_g v + o(t).
\end{align*}
Let $G_t(X,Y) =J_{f_t}^{-2/n} g(Df_t X, Df_t Y)$. If we write $g(X,Y)$ in matrix notation as $X^T g Y$, then 
\begin{align*}
    G_t(X,Y) &= (1 + t \diver_g v +o(t))^{-2/n}[(I_n + tDv + o(t))^TX^T](g + t Dg v + o(t))[(I_n + tDv + o(t))Y] \\
    &=(1 + t \diver_g v +o(t))^{-2/n}(g(X,Y) + t (g(Dv X,Y) + g(X,Dv Y) + Dg v(X,Y)) +o(t)).
\end{align*}
By the definition of $\cc L_g v$ and expanding the first term,
\begin{align*}
     G_t(X,Y) &= (1- t \frac{2}{n} \diver_g v  + o(t)) (g(X,Y) + t \cc L_vg(X,Y) +o(t))  \\
    &= g(X,Y) + 2tS_gv(X,Y) + o(t).
\end{align*}
So the derivative of $G_t$ at $t = 0$ can be computed,
\begin{align}\label{eq-derivGat0}
    \frac{d}{dt}(G_t(X,Y))|_{t = 0} = 2S_gv(X,Y).
\end{align}
Following \cite[Corollary 1]{pierzchalski}, we now show that
\begin{align}\label{eq-derivativejacobian}
    \frac{d}{dt}(G_t(X,Y))|_{t = s} = 2J_{f_s}^{-2/n}S_gv(Df_sX,Df_sY).
\end{align}
To see this note that
\begin{align*}
    G_{t+s}(X,Y) = J_{\tilde f_t}^{-2/n} J_{f_s}^{-2/n}2S_gv(D\tilde f_t(Df_sX),D\tilde f_t(Df_sY)),
\end{align*}
where $\tilde f$ is the flow map along the vector field $v_s(x,t) = v(x,s+t)$.
So
\begin{align*}
    \frac{d}{dt}(G_t(X,Y))|_{t = s} &= \lim_{t\to 0} \frac{1}{t}(G_{t+s}(X,Y) - G_s(X,Y)) \\
    &= \lim_{t\to 0} \frac{1}{t}(J_{\tilde f_t}^{-2/n} J_{f_s}^{-2/n}g(D\tilde f_t(Df_sX),D\tilde f_t(Df_sY)) - J_{f_s}^{-2/n}g(Df_sX,Df_sY)) \\
    &= J_{f_s}^{-2/n}\frac{d}{dt}  G_t^{v_s}(Df_sX,Df_sY)|_{t=0},
\end{align*}
where $G^{v_s}$ is defined as $G$ except the mapping used comes from the vector field $v_s$.
Applying \eqref{eq-derivGat0} yields \eqref{eq-derivativejacobian}.
Therefore,
\begin{align*}
    \abs{\frac{d}{dt}(G_t(X,Y))|_{t = s}} &= 2\abs{J_{f_s}^{-2/n}S_gv(Df_sX,Df_sY)} \\
    &\le 2 \|S_g v\|_\infty \abs{J_{f_s}^{-2/n}g(Df_sX,Df_sY)} \\
    & \le 2c G_s(X,Y).
\end{align*}
Integrating the bound for $G$, we see that
\begin{align*}
    g(Df_t X, Df_t Y) \le e^{2ct} J_{f_t}^{2/n}
\end{align*}
and so $f_t$ is $e^{nct}$-quasiconformal.
\end{proof}
The following proposition is a generalization of \cite[Proposition 3.6]{bonkheinonensaksman} to the Riemannian setting.  The proof is essentially the same.
\begin{prop}\label{prop-jacobianflowmfd}
Let $v$ be as in Proposition \ref{prop-qcflowmfd} and let $f_t$ be the solution to \eqref{eq-mfdode}.  If $\vol$ is the volume form for $g$ and $*$ is the Hodge star operator, then for almost every $x \in S^n$
\begin{align*}
    \log(*f_t^*(\vol)) = \int_0^t \diver v(f_{t'},t')dt'.
\end{align*}
\end{prop}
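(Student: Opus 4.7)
The plan is to prove the formula first for smooth $v$, where it is a manifestation of Liouville's theorem, and then to approximate a general $W^{1,1}$ vector field by smooth ones, passing to the limit using the uniform quasiconformal control from Proposition \ref{prop-qcflowmfd}.

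For smooth $v_t$, I would use the Cartan-type identity $\frac{d}{dt}f_t^*\omega = f_t^*\cc{L}_{v_t}\omega$ together with $\cc{L}_{v_t}\vol = (\diver_g v_t)\vol$ to obtain
\[
\frac{d}{dt}f_t^*\vol = (\diver_g v_t \circ f_t)\,f_t^*\vol.
\]
Applying the Hodge star and dividing by the positive scalar $*f_t^*\vol$ yields
\[
\frac{d}{dt}\log(*f_t^*\vol) = \diver_g v_t \circ f_t,
\]
and integrating from $0$ to $t$ gives the claim in this case.

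To extend to $v_t \in W^{1,1}(S^n)$, I would mollify $v_t$ via a partition of unity with Euclidean mollifiers in normal charts, producing smooth $v^\epsilon_t$ with $v^\epsilon_t \to v_t$ in $W^{1,1}(S^n)$, uniformly for $t \in [0,T]$, and $\|S_g v^\epsilon_t\|_\infty \le c + o(1)$ as $\epsilon \to 0$. The latter bound is the delicate part of the approximation, since mollification does not commute with $S_g$; however, the commutator involves only first derivatives of the metric paired with $v^\epsilon$ and zeroth order terms in $v^\epsilon$, both of which can be absorbed using Lemma \ref{lemma-Scomparable} together with standard smoothing estimates. Proposition \ref{prop-smoothodeexistence} then applies to $v^\epsilon$ and produces smooth flows $f^\epsilon_t$ that are uniformly $e^{(nc+o(1))t}$-quasiconformal, and standard Gr\"onwall-type ODE estimates give uniform convergence $f^\epsilon_t \to f_t$ on $S^n\times[0,T]$. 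The smooth case of the formula yields
\[
\log(*(f^\epsilon_t)^*\vol)(x) = \int_0^t \diver_g v^\epsilon_{t'}(f^\epsilon_{t'}(x))\,dt'.
\]

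The main obstacle is passing to the limit in this identity when $\diver_g v$ is merely integrable. On the left-hand side, the Gehring-type higher integrability for Jacobians of uniformly quasiconformal maps provides $*(f^\epsilon_t)^*\vol \to *f_t^*\vol$ in $L^p$ for some $p > 1$, and hence almost everywhere along a subsequence. On the right-hand side, a change of variables for quasiconformal maps rewrites $\int (\diver_g v^\epsilon_{t'}\circ f^\epsilon_{t'})\phi\,dv_g$ as $\int \diver_g v^\epsilon_{t'}\cdot(\phi\circ (f^\epsilon_{t'})^{-1})\,J_{(f^\epsilon_{t'})^{-1}}\,dv_g$; the uniform higher integrability of the weight $J_{(f^\epsilon_{t'})^{-1}}$ converts the $L^1$ convergence of $\diver_g v^\epsilon$ into almost everywhere convergence of the time integrals through a Vitali / uniform integrability argument, after extracting a further subsequence. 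Combining both sides yields the formula for almost every $x \in S^n$. The chief technical subtlety is justifying the change of variables at the $W^{1,1}$ regularity level, which in turn relies on the Lusin $(N)$ property for quasiconformal maps on closed Riemannian manifolds.
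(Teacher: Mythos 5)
Your overall strategy (prove the identity for smooth $v$ via $\frac{d}{dt}f_t^*\vol = f_t^*\cc L_{v_t}\vol = (\diver_g v_t\circ f_t)\,f_t^*\vol$, then approximate) is a legitimate alternative in principle, and the smooth computation is correct. But it is not the paper's route, and the limit passage as you describe it has genuine gaps. First, on the left-hand side: Gehring higher integrability gives only \emph{uniform} $L^{1+\delta}$ bounds on the Jacobians $*(f^\epsilon_t)^*\vol$, hence weak compactness; it does not give strong $L^p$ convergence, and a.e. convergence of the Jacobians does not follow from locally uniform convergence of uniformly quasiconformal maps by any standard result you have cited. (Tellingly, when the paper later uses this proposition in the proof of Theorem \ref{thm-qcflowbilip}, it only invokes \emph{weak} convergence of Jacobians and settles for two-sided a.e.\ bounds rather than an identity.) Second, on the right-hand side: after the change of variables you have $\int |\diver_g v^\epsilon_{t'}-\diver_g v_{t'}|\,J_{(f^\epsilon_{t'})^{-1}}\,dv_g$ with the integrand converging to $0$ only in $L^1$ and the weight only uniformly bounded in $L^{1+\delta}$. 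The product of an $L^1$-null sequence against a uniformly integrable but unbounded weight need not tend to zero (concentrate $|\diver v^\epsilon - \diver v|$ and the weight on the same small set), so the Vitali argument does not close at the stated $W^{1,1}$ regularity. There is also the separate term $\diver_g v_{t'}\circ f^\epsilon_{t'} - \diver_g v_{t'}\circ f_{t'}$, a fixed $L^1$ function composed with converging maps, which needs its own argument.

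The paper avoids approximation entirely. After using Lemma \ref{lemma-Scomparable} to see that the $f_t$ are quasiconformal with respect to the round metric, it imports \cite[Lemma 3.4]{bonkheinonensaksman}, which supplies exactly the low-regularity input you are missing: the integral formula $Df_t(x) = I_n + \int_0^t Dv(f_{t'}(x))Df_{t'}(x)\,dt'$ for a.e.\ $x$, together with absolute continuity in $t$ of the relevant quantities. One then sets
\begin{align*}
A(t) = J_{f_t}\exp\paren{-\int_0^t \diver v(f_{t'}(x),t')\,dt'},
\end{align*}
computes $\frac{d}{dt}J_{f_t}\big|_{t=s} = J_{f_s}\operatorname{Tr}_g(Dv_s(f_s))$ from the integral formula and the a.e.\ invertibility of $Df_t$, and checks $A'(s)=0$, so $A\equiv A(0)=1$. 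If you want to salvage your approach, the missing idea is precisely this pointwise-in-$x$ absolute continuity statement for the flow of a $W^{1,1}$ field with bounded conformal strain; without it (or without strengthening the hypotheses on $\diver v$ beyond $L^1$), the approximation scheme does not yield the exact a.e.\ identity.
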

\begin{proof}
Let $J_{f_t} = *f_t^*(\vol)$ and
\begin{align*}
    A(t) := J_{f_t} \exp \paren{-\int_0^t \diver v(f_{t'}(x),t')dt'}
\end{align*}
Due to Lemma \ref{lemma-Scomparable}, the maps $f_t$ are quasiconformal with respect to the standard metric on $S^n$.  This allows us to apply \cite[Lemma 3.4]{bonkheinonensaksman}, which asserts that $A$ is differentiable almost everywhere.
By \eqref{eq-Dfintegralformula} and since $Df_t$ is invertible almost everywhere,
\begin{align*}
    \frac{d}{dt}J_{f_t}\bigg|_{t = s} &= J_{f_s} \operatorname{Tr}_g\paren{\frac{d}{dt}Df_t\bigg|_{t=s}Df_{s}^{-1}}\\
    &= J_{f_s} \operatorname{Tr}_g(Dv_s(f_s)).
\end{align*}
So
\begin{align*}
A'(s) &=  J_{f_s}\operatorname{Tr}_g(Dv(f_s,s))\exp \paren{-\int_0^s \diver v_{t'}(f_{t'}(x),t')dt'} 
- J_{f_s}\exp \paren{-\int_0^s \diver v(f_{t'}(x),t;)dt'}\diver_g v_s(f_s) \\
&= 0.
\end{align*}
\end{proof}

We will also need the following general facts from the theory of quasiconformal maps.

\begin{prop}\label{prop-qcmapnormalfamily}
Suppose that $W$ is a family of $K$-quasiconformal maps from $(S^n,g)$ to itself.  There exists a sequence in $W$ and a point $p \in S^n$ such that the sequence converges locally uniformly to either a constant function on $S^n\setminus \{p\}$ or a $K$-quasiconformal map.
\end{prop}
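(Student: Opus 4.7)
The proposition is a Montel-type normal family theorem for $K$-quasiconformal self-maps of $(S^n,g)$. My plan is to reduce to the well-known statement on the round sphere and then carry out a three-point normalization argument based on quasi-symmetry. Since $g$ is a smooth metric on compact $S^n$, it is bi-Lipschitz equivalent to the standard round metric $g_c$, and a direct comparison of $\|Df\|^n$ and $J_f$ in the two metrics shows that any $K$-quasiconformal map with respect to $g$ is $K'$-quasiconformal with respect to $g_c$ for some $K' = K'(K,g)$. Next I would invoke the classical distortion estimates for quasiconformal self-maps of the round sphere, which yield that each $f \in W$ is $\eta$-quasi-symmetric for some control function $\eta = \eta_{K',n}$; in particular, the family $W$ will enjoy a common modulus of continuity once any three points are sent to three well-separated points.

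I would then fix three distinct reference points $a,b,c \in S^n$. Given any sequence $(f_j) \subset W$, compactness of $S^n$ lets me pass to a subsequence so that $f_j(a) \to a^*$, $f_j(b) \to b^*$ and $f_j(c) \to c^*$. If $a^*,b^*,c^*$ are three distinct points, then the quasi-symmetry inequality (with two of $\{a,b,c\}$ serving as reference) gives uniform equicontinuity of $(f_j)$ on all of $S^n$. Arzel\`a--Ascoli will produce a further subsequence converging locally uniformly to a continuous limit $F$. Quasi-symmetry passes to the limit, so $F$ is injective and hence a homeomorphism of $S^n$, and lower semicontinuity of dilatation under locally uniform convergence (standard in quasiconformal/Sobolev theory) ensures that $F$ is $K$-quasiconformal with respect to the original metric $g$, which recovers the second alternative of the proposition.

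If instead two of $a^*,b^*,c^*$ coincide --- say $a^* = b^* = q$ --- then the quasi-symmetry inequality
\[
    d_{g_c}(f_j(x),f_j(a)) \le \eta\!\paren{\frac{d_{g_c}(x,a)}{d_{g_c}(b,a)}} d_{g_c}(f_j(b),f_j(a))
\]
forces $f_j(x) \to q$ for any $x$ whose distance to $a$ is not too large compared to $d_{g_c}(b,a)$. I would then apply the same reasoning to $f_j^{-1}$ (also $K'$-quasiconformal), pass to a further subsequence with $f_j^{-1}(q) \to p$, and argue that $f_j \to q$ locally uniformly on $S^n \setminus \{p\}$: if some compact set in $S^n \setminus \{p\}$ had images staying a fixed distance from $q$, quasi-symmetry of $f_j^{-1}$ would produce three well-separated preimages in that compact set, returning us to the previous case. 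The main obstacle I foresee is this final step --- ruling out an exceptional set larger than the single point $p$ --- which requires a careful combined use of the quasi-symmetry bounds for both $(f_j)$ and $(f_j^{-1})$, together with the observation that any failure of collapse away from $p$ would contradict the coincidence $a^* = b^*$ obtained at the start.
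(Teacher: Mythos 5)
Your proposal is correct in outline but takes a longer route than the paper. The paper's proof consists of exactly two observations: (1) since $S^n$ is compact, $\|Df\|_{g}$ and $f^*\vol_g$ are each comparable to their counterparts for the round metric $g_c$, so every map in $W$ is $K'(K,g)$-quasiconformal with respect to $g_c$; (2) the normal family alternative for uniformly quasiconformal self-maps of the \emph{round} sphere is then quoted directly from Gehring--Martin--Palka (Theorem 6.6.26), and the limit in the nondegenerate case is $K$- rather than merely $K'$-quasiconformal with respect to $g$ by lower semicontinuity of the dilatation. You perform the same reduction (1) and the same final upgrade of the constant, but in place of (2) you re-derive the round-sphere normal family theorem from scratch via three-point normalization and quasi-symmetry. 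That re-derivation is the classical proof and is sound in spirit; the one place where your argument is genuinely incomplete is the degenerate case, where you acknowledge that ruling out an exceptional set larger than a single point requires a ``careful combined use'' of the quasi-symmetry of $f_j$ and $f_j^{-1}$ without carrying it out. This is precisely the content that the citation packages up, so if you intend to keep your self-contained route you should fully execute that step (pass to $f_j^{-1}$, extract its own convergent subsequence, and show that failure of collapse on a compact set away from $p$ contradicts $a^*=b^*$); otherwise the more economical move is simply to invoke the known theorem for $g_c$ as the paper does.
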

\begin{proof}
This theorem holds in the case when the metric on $S^n$ is the standard one, $g_c$, and the proof can be found in \cite[Theorem 6.6.26]{gehringmartinpalka}.
It suffices to show that if $f \colon (S^n,g) \to (S^n,g)$ is $K$-quasiconformal with respect to $g$, then $f$ is $K'$-quasiconformal with respect to $g_c$, where $K'$ only depends on $g$ and $K$.

Since $S^n$ is compact,
\begin{align*}
    \frac{1}{C}\|Df\|_{g_c} \le \|Df\|_g \le C \|Df\|_{g_c} \quad \text{and} \quad \frac{1}{C}f^*\vol_{g_c} \le f^*\vol_g \le C f^*\vol_{g_c},
\end{align*}
where the constant in the inequalities depends only on $g$.
The defining inequality for quasiconformal maps shows that $f$ is quasiconformal with respect to $g_c$ as well.  Finally, a limiting map will be $K$-quasiconformal (not only $K'$ with respect to $g$) since the uniform limit of $K$-quasiconformal maps is $K$-quasiconformal (see e.g., \cite[Corollary 10.30]{heinonenbook}).
\end{proof}

\begin{prop}\label{prop-qcandcomparablejacobianbilip}
Let $(M,g)$ be an $n$-dimensional Riemannian manifold and let $f \colon (M,g) \to (M,g) $ be a $K$-quasiconformal map. Suppose that the Jacobian of $f$, $J_f$ satisfies
\begin{align*}
    \frac{1}{C}e^{nw} \le J_f \le C e^{nw},
\end{align*}
then $f$ is a bilipschitz map from $(M,e^{2w}g)$ to $(M,g)$ with constant $CK$.
\end{prop}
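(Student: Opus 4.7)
The plan is to convert the hypotheses into pointwise bounds on $Df$ and $(Df)^{-1}$ almost everywhere, reinterpret these with respect to the conformally rescaled metric $e^{2w}g$ on the source, and finally upgrade the infinitesimal bounds to a global bilipschitz estimate by integrating along geodesics, using the absolute continuity on lines enjoyed by quasiconformal maps.

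First, from the $K$-quasiconformality we have $\|Df\|_g^n \le K J_f$ almost everywhere, so the upper Jacobian bound $J_f \le C e^{nw}$ immediately gives $\|Df_x\|_g \le (CK)^{1/n} e^{w(x)}$. For the smallest singular value $\ell(Df)_g := \min_{|v|_g = 1}|Df(v)|_g$, I would combine the elementary inequality $J_f = \det Df \le \|Df\|_g^{n-1}\ell(Df)_g$ with quasiconformality to deduce
\begin{align*}
    \ell(Df)_g \ge \frac{J_f^{1/n}}{K^{(n-1)/n}},
\end{align*}
which together with the lower bound $J_f \ge C^{-1}e^{nw}$ yields $\ell(Df_x)_g \ge (CK^{n-1})^{-1/n} e^{w(x)}$.

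Next, I would transfer these bounds to the conformal metric on the source. Since $|v|_{e^{2w}g} = e^{w(x)}|v|_g$ for $v \in T_xM$, the operator norm and minimum singular value scale as
\begin{align*}
    \|Df_x\|_{(e^{2w}g)\to g} = e^{-w(x)}\|Df_x\|_g, \qquad \ell(Df_x)_{(e^{2w}g)\to g} = e^{-w(x)}\ell(Df_x)_g,
\end{align*}
so both quantities are uniformly pinched between positive constants depending only on $C$, $K$ and $n$ almost everywhere.

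The main obstacle is to pass from these a.e.\ infinitesimal bounds to a genuine bilipschitz constant, since such an implication fails for general $W^{1,n}$ maps. Here I invoke the ACL property of quasiconformal maps: $f \in W^{1,n}_{\mathrm{loc}}$ is absolutely continuous along almost every curve in a foliation, and length distortion is controlled by integrating $\|Df\|$. Given $x,y \in M$, take a length-minimizing $e^{2w}g$-geodesic $\gamma$ between them, embed it in a nearby foliation so that ACL applies to a full-measure family of parallel curves, and then estimate
\begin{align*}
    d_g(f(x),f(y)) \le \mathrm{length}_g(f\circ\gamma) = \int \|Df(\gamma'(t))\|_{(e^{2w}g)\to g}\, dt \le (CK)^{1/n}\, d_{e^{2w}g}(x,y),
\end{align*}
after passing to the limit through the approximating family. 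Applying the same argument to $f^{-1}\colon (M,g)\to (M,e^{2w}g)$, which is also $K$-quasiconformal and whose Jacobian $J_{f^{-1}} = 1/(J_f\circ f^{-1})$ satisfies the reciprocal two-sided bound, gives the lower bilipschitz inequality. Since $(CK)^{1/n}$ and $(CK^{n-1})^{1/n}$ are both dominated by $CK$ when $C,K\ge 1$, the resulting bilipschitz constant is at most $CK$ as claimed.
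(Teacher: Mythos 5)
Your proof is correct and follows essentially the same route as the paper's: deduce a.e.\ two-sided bounds on $Df$ viewed as a map from $(T_xM,e^{2w}g)$ to $(T_{f(x)}M,g)$ from quasiconformality plus the Jacobian pinching, then upgrade to a global bilipschitz estimate using the Sobolev regularity of $f$ and the same argument for $f^{-1}$. You spell out the ACL/geodesic-integration step and the singular-value bound via $J_f \le \|Df\|^{n-1}\ell(Df)$ that the paper leaves implicit, but this is a filling-in of detail rather than a different argument.
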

\begin{proof}
We first show that $|f(B)|_g$ is comparable to $|B|_{e^{2w}g}$ for all balls in $M$.
\begin{align*}
    |f(B)|_g &= \int_B f^*\vol_g = \int_B J_f \vol_g\\
    &\le C\int_B e^{nw} \vol_g\\
    &= C|B|_{e^{2w}g}.
\end{align*}
Since $f$ is quasiconformal we have that $\|Df\|^n \le K J_f$ and so $Df(x)$ is bounded almost everywhere as a map between $(T_xM,e^{2w}g)$ and $(T_{f(x)}M,g)$ by $CK$.  This and the fact that $f \in W^{1,n}_{\text{loc}}(M,M)$ gives that $f$ is Lipschitz.
The same argument for the inverse gives that $f$ is bilipschitz.

\end{proof}

\begin{prop}\label{prop-jacobiancomparability}
Let $f \colon (S^n,g) \to (S^n,g)$ be a $K$-quasiconformal map. For any $x,y \in S^n$ and $0 < c < 1$,
\begin{align*}
    \frac{1}{C(g)^K}d_g(f(x),f(y)) \le  \abs{f(B(x,cd_g(x,y))}^{1/n} \le C(g)^K d_g(f(x),f(y)),
\end{align*}
where $C(g)$ depends on $c$.
Additionally, for any ball $B \subset M$,
\begin{align*}
    |f(2B)| \le C(g)^K|f(B)|.
\end{align*}
\end{prop}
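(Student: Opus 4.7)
The plan is first to reduce to the standard metric $g_c$ on $S^n$ and then to invoke the classical equivalence between quasiconformality and quasisymmetry.  Since $(S^n,g)$ is compact, the identity map $(S^n,g)\to(S^n,g_c)$ is $C(g)$-bilipschitz, so $f$ is $K'$-quasiconformal with respect to $g_c$ for some $K'$ depending only on $K$ and $g$, and volumes and diameters measured in $g$ differ from those in $g_c$ by factors depending only on $g$.  Hence it suffices to establish both assertions for $f\colon(S^n,g_c)\to(S^n,g_c)$, absorbing the change-of-metric constants into $C(g)^K$ at the end.

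Next I would invoke the theorem (see e.g.\ \cite[Chapter 10]{heinonenbook}) that every $K$-quasiconformal self-map of the standard sphere is $\eta$-quasisymmetric with a control function $\eta$ depending only on $K$.  By the standard estimates $\eta$ may be taken of the form $\lambda\max(t^{1/K},t^K)$ for a universal $\lambda$, so $\eta(c)$ and $\eta(1/c)$ are both bounded by $C^K$ for any fixed $c\in(0,1)$.  Given any ball $B=B(x_0,r)$ with $r<\pi$, the image $f(B)$ is a topological ball with boundary $f(\partial B)$; setting
\begin{align*}
r_+(B):=\sup_{z\in\partial B}d_{g_c}(f(x_0),f(z)),\qquad r_-(B):=\inf_{z\in\partial B}d_{g_c}(f(x_0),f(z)),
\end{align*}
quasisymmetry applied to any pair of points on $\partial B$ (whose $g_c$-distance ratio to $x_0$ equals $1$) gives $r_+(B)\le\eta(1)r_-(B)$.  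Therefore $B(f(x_0),r_-(B))\subset f(B)\subset B(f(x_0),r_+(B))$ and $\abs{f(B)}\sim r_+(B)^n$ with multiplicative constants at most $C^K$.

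For the first assertion, apply this with $B=B(x,cd_g(x,y))$.  For any $z\in\partial B$ one has $d_g(x,z)=cd_g(x,y)$, so quasisymmetry applied to the triples $(x,z,y)$ and $(x,y,z)$ yields
\begin{align*}
\eta(1/c)^{-1}d_{g_c}(f(x),f(y))\le d_{g_c}(f(x),f(z))\le\eta(c)d_{g_c}(f(x),f(y)).
\end{align*}
Taking the supremum and infimum over $z\in\partial B$ shows that $r_+(B)$ (and $r_-(B)$) is comparable to $d_{g_c}(f(x),f(y))$ with constants bounded by $C^K$; combining with $\abs{f(B)}\sim r_+(B)^n$ gives the desired two-sided estimate.

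For the doubling assertion, write $B=B(x_0,r)$ and $2B=B(x_0,2r)$.  Comparing a farthest point of $\partial(2B)$ to a nearest point of $\partial B$ via quasisymmetry (with $g_c$-distance ratio at most $2$) gives $r_+(2B)\le\eta(2)r_-(B)$, hence $\abs{f(2B)}\le(\eta(1)\eta(2))^n\abs{f(B)}\le C^K\abs{f(B)}$.  The main obstacle in this program is really just bookkeeping the exponential-in-$K$ dependence of the quasisymmetric control function $\eta$; once that is secured from the standard quasisymmetry theorem, every remaining step is a direct manipulation of the quasisymmetric inequality together with the sandwich of $f(B)$ between the two concentric balls of radii $r_-(B)$ and $r_+(B)$.
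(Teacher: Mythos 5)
Your proposal is correct and follows essentially the same route as the paper: reduce to the standard metric $g_c$ by compactness, invoke the quasisymmetry control function $\eta$ with its exponential-in-$K$ bound (the paper cites $\eta_{K'}(s)\le 4^{K'}e^{2K'(n-1)}(1+s)^{K'}$ from Bonk--Heinonen--Saksman), and compare sup/inf image distances over concentric spheres to sandwich $\abs{f(B)}^{1/n}$ between multiples of $d_{g_c}(f(x),f(y))$. Your explicit inclusion $B(f(x_0),r_-(B))\subset f(B)\subset B(f(x_0),r_+(B))$ is just a slightly more detailed rendering of a step the paper leaves implicit.
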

\begin{proof}
These comparability relations are both well-known facts for quasiconformal maps on the standard sphere and as used in the proof of Proposition \ref{prop-qcmapnormalfamily}, $f$ is quasiconformal with respect to $g_c$ quantitatively.
Since we need explicit bounds for our dependencies we provide a proof.

If $f$ is $K$-quasiconformal with respect to $g$, then $f$ is $K' = C(g)K$ quasiconformal with respect to $g_c$, the standard metric on $S^n$.
So
\begin{align*}
    d_{g_c}(f(x),f(y)) &\le \sup_{\{z:d_{g_c}(x,z) = d_{g_c}(x,y)\}} d_{g_c}(f(x),f(z)) \\
    &=\frac{\sup_{\{z:d_{g_c}(x,z) = d_{g_c}(x,y)\}} d_{g_c}(f(x),f(z))}{\inf_{\{z:d_{g_c}(x,z) = cd_{g_c}(x,y)\}} d_{g_c}(f(x),f(z))}\inf_{\{z:d_{g_c}(x,z) = cd_{g_c}(x,y)\}} d_g(f(x),f(z)) \\
    &\le  C\eta_{K'}(1/c) \abs{f(B(x,cd_g(x,y))}^{1/n}_{g_c},
\end{align*}
where $\eta_{K'}$ is the quasisymmetry function for $f$ and depends only on $K$.  The function $\eta_{K'}$ is defined as follows: For any $K'$-quasiconformal map $f \colon (S^n,g_c) \to (S^n,g_c)$, there exists an increasing homeomorphism $\eta_{K'} \colon [0,\infty)\to [0,\infty)$ that depends only on $K'$ so that
\begin{align*}
    \frac{d_{g_c}(f(x),f(y))}{d_{g_c}(f(x),f(z))} \le \eta \bigg ( \frac{d_{g_c}(x,y)}{d_{g_c}(x,z)}\bigg ),
\end{align*}
for all $x,y,z \in S^n$.  In fact, by \cite[Lemma 2.1]{bonkheinonensaksman},
\begin{align*}
    \eta_{K'}(s) \le 4^{K'}e^{2K'(n-1)}(1+s)^{K'}.
\end{align*}
Therefore,
\begin{align*}
    d_{g_c}(f(x),f(y)) &\le C^{K'} \abs{f(B(x,d_g(x,y))}^{1/n}_{g_c} \\
    & \le C(g)^{K} \abs{f(B(x,d_g(x,y))}^{1/n},
\end{align*}
where $C(g) > 1$ depends only on $g$.
The second inequality in the first relation of the proposition is proven similarly.

For the second relation,
\begin{align*}
    \frac{|f(2B)|_g}{|f(B)|_g} &\le C(g) \frac{|f(2B)|_{g_c}}{|f(B)|_{g_c}} \\
    &\le  C(g) \frac{\sup_{y \in 2\partial B} d_{g_c}(f(x),g(y))^n}{\inf_{z \in \partial B} d_{g_c}(f(x),g(z))^n} \\
    &\le C(g)  \eta_{K'}(2)^n,
\end{align*}
Therefore
\begin{align*}
    \frac{|f(2B)|_g}{|f(B)|_g}  \le C(g)^K,
\end{align*}
where $C(g) > 1$ depends only on $g$.

\end{proof}

\begin{remark}
\emph{If  $g$ is bilipschitz equivalent to $g_c$, the standard metric on the sphere $g$, then the constant $C(g)$ will only depend on the bilipschitz constant.  If $g$ is a Yamabe metric on $S^4$ and the $L^2$-norm of its Weyl tensor is sufficiently small, then $C(g)$ depends only on the $L^2$-norms of the Weyl and Bach tensors by Theorem \ref{thm-riccibilip}.  This will be the setting when we apply this proposition below in Section \ref{sec-proofqcflow}.}
\end{remark}

\subsection{The Green's Function}
In the proof for Theorem \ref{thm-qcflowbilip} we use the Green's function of the Paneitz operator.
Gursky in \cite[Theorem A]{gursky99} showed that the Green's function for the Paneitz operator exists given the hypotheses in Theorem \ref{thm-qcflowbilip}.  The following proposition gives a formula for the Green's function near a pole.  For a more detailed study of the asymptotic behavior of the Green's function near its poles we refer the reader to \cite{hangyang}.

\begin{prop}\label{prop-greenfunctionexpansion}
If $(M,g)$ is a smooth compact $4$-dimensional Riemannian manifold such that
\begin{align*}
    \int_M Q_g \ge 0
\end{align*}
and
$Y([g]) \ge 0$,
then the Green's function for the Paneitz operator $G(x,y)$ exists and can be expressed as
\begin{align}\label{eq-greensfunctionexpansion}
    G(x,y) = \frac{1}{8\pi^2} \log \frac{1}{d_{g}(x,y)} + h(x,y),
\end{align}
where $h(x,y)$ is smooth and bounded independently of $x$ and $y$.
\end{prop}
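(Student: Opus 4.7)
The plan is to invoke Gursky's existence theorem \cite[Theorem~A]{gursky99} for $G(x,y)$ and then identify its leading singularity by a parametrix argument modeled on the Euclidean fundamental solution. Under the hypotheses $Y([g]) \ge 0$ and $\int_M Q_g \ge 0$, $P_g$ is nonnegative with kernel equal to the constants, so the Green's function exists and satisfies $P_{g,x} G(\cdot,y) = \delta_y - \vol_g(M)^{-1}$ with the normalization $\int_M G(x,y)\,dv_g(x) = 0$. The guiding heuristic is the distributional identity $\Delta^2\bigl(\tfrac{1}{8\pi^2}\log(1/|x|)\bigr) = \delta_0$ in $\bR^4$, combined with the fact that $P_g$ differs from $\Delta_g^2$ only by second-order terms with smooth coefficients, so $G$ should locally look like the Euclidean fundamental solution transported by normal coordinates.

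First I would construct an explicit parametrix. Using the positive injectivity radius of the compact manifold $M$, fix $\delta$ below it and a smooth cutoff $\chi$ with $\chi \equiv 1$ on $\{d_g \le \delta/2\}$ and support in $\{d_g < \delta\}$, and set
\begin{align*}
\Gamma(x,y) := \frac{\chi(d_g(x,y))}{8\pi^2}\log\frac{1}{d_g(x,y)}.
\end{align*}
In geodesic normal coordinates at $y$, the metric expansion $g_{ij} = \delta_{ij} + O(d_g^2)$ combined with the Euclidean identity above yields
\begin{align*}
P_{g,x}\,\Gamma(x,y) = \delta_y + r(x,y),
\end{align*}
where $r(\cdot,y)$ is smooth off the diagonal and of size $O(d_g(x,y)^{-2})$ near it; the $d_g^{-2}$ bound is the worst contribution produced both by curvature corrections to $\Delta_g^2$ and by the second-order terms $\diver((2R/3-2\Ric)d\,\cdot)$ applied to $\log(1/d_g)$. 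In particular $r(\cdot,y) \in L^p(M)$ for every $p<2$, uniformly in $y$.

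Next I would solve for the remainder $h(x,y) := G(x,y) - \Gamma(x,y)$. It satisfies
\begin{align*}
P_{g,x}\,h(\cdot,y) = -r(\cdot,y) - \vol_g(M)^{-1}
\end{align*}
in the mean-zero subspace. Because $P_g$ is a fourth-order elliptic operator with smooth coefficients and is quantitatively invertible on the mean-zero Sobolev space under the given positivity hypotheses, standard elliptic theory yields a bound $\|h(\cdot,y)\|_{W^{4,p}(M)} \le C\|r(\cdot,y)\|_{L^p(M)} + C$ uniformly in $y$. Choosing $p\in(4/3,2)$ and using Morrey's embedding gives $h(\cdot,y) \in C^{0,\alpha}(M)$ with norm independent of $y$, which establishes the uniform bound required by the proposition. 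Smoothness of $h$ off the diagonal, and continuity across it, come from differentiating the defining equation in $y$ and iterating elliptic regularity; sharper asymptotics on the diagonal itself are the content of \cite{hangyang}.

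The step I expect to be most delicate is obtaining bounds \emph{genuinely uniform in the pole} $y$: this requires both a uniform parametrix, which is immediate from compactness of $M$ and the positive injectivity radius, and a uniform elliptic estimate for $P_g$, which ultimately rests on the quantitative invertibility of $P_g$ supplied by Gursky's theorem under the positivity hypotheses $Y([g]) \ge 0$ and $\int_M Q_g \ge 0$. Once both uniformities are in place, boundedness of $h(x,y)$ independently of both $x$ and $y$ follows at once.
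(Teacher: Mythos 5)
Your argument is correct, and it reaches the conclusion by a genuinely different route from the paper's. The paper passes to conformal normal coordinates and invokes the Gursky--Malchiodi radial expansion (Lemma \ref{lemma-radial}): for the radial function $u=\frac{1}{8\pi^2}\log(1/r)$ the error $P_{\tilde g}u-\delta$ is $O(1)$, because the dangerous term $\nabla_k\nabla_l R(o)x^kx^l\,\cc D(u)$ is a product of $O(r^2)$ and $O(r^{-2})$; elliptic regularity then puts $G_{\tilde g}-u$ in $W^{4,p}_{\mathrm{loc}}$ for every $p>1$, and a separate step using the conformal covariance $P_{\tilde g}=e^{-4w}P_g$ shows $G_{\tilde g}-G_g$ is a constant, with uniformity in the pole obtained from a finite cover and the symmetry of $G$. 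You instead build the parametrix directly in ordinary geodesic normal coordinates, accept the weaker error $r(\cdot,y)=O(d_g^{-2})\in L^p$ for $p<2$, and compensate with the embedding $W^{4,p}\hookrightarrow C^{0,\alpha}$, which still applies precisely because $P_g$ is fourth order in dimension four (any $p>1$ would do). This avoids conformal normal coordinates and the comparison of $G_{\tilde g}$ with $G_g$ altogether, and it yields $\sup_x|h(x,y)|\le C$ uniformly in $y$ in one stroke, rather than via the paper's closing symmetry-and-continuity argument; the trade-off is slightly weaker regularity across the diagonal, which is immaterial for the proposition. The two uniformities you flag --- a parametrix error bounded in $L^p$ uniformly in $y$ (immediate from compactness and the injectivity radius) and invertibility of $P_g$ modulo constants with a fixed norm bound (supplied by Gursky's Theorem A together with standard Fredholm theory) --- are indeed the only delicate points, and both hold as you describe.
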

In order to prove Proposition \ref{prop-greenfunctionexpansion}, we quote the following lemma.
\begin{lemma}[{\cite[Lemma 2.8]{gurskymalchiodi}}]\label{lemma-radial}
Let $(M,g)$ be a smooth compact $4$-dimensional Riemannian manifold such that
\begin{align*}
    \int_M Q_g \ge 0
\end{align*}
and
$Y([g]) \ge 0$.
If $u$ is a radial function around a point $o \in M$ and $\tilde g$ is the metric for the conformal normal coordinates around $o$, then
\begin{align*}
    P_{\tilde g} u = \Delta_g^2 u + \frac{1}{12}\nabla_k\nabla_l R(o) x^kx^l \cc D(u) +O(r) u + O(r^2)|u'|+ O(r^3)|u''|+ O(r^4)|u'''|,
\end{align*}
where
\begin{align*}
    \cc  D(u) = \frac{2u'}{r}- 2u''
\end{align*}
and $r = d(o,x)$.
\end{lemma}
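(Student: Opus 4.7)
The plan is to compute $P_{\tilde g} u$ term by term for a radial $u$. Starting from
\[
P_{\tilde g} u = \Delta_{\tilde g}^2 u + \diver_{\tilde g}\!\paren{\paren{\tfrac{2}{3}R_{\tilde g}\,\tilde g - 2\Ric_{\tilde g}}\nabla u},
\]
I would apply the contracted Bianchi identity $\nabla^i \Ric_{ij} = \tfrac{1}{2}\nabla_j R$ to rewrite the divergence term as $-\tfrac{1}{3}\langle \nabla R_{\tilde g},\nabla u\rangle + \tfrac{2}{3} R_{\tilde g}\Delta_{\tilde g} u - 2\Ric_{\tilde g}(\nabla^2 u)$. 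Next, I would invoke the key property of conformal normal coordinates (Lee--Parker) that $\sqrt{\det \tilde g}(x) = 1 + O(r^N)$ for every $N$, which forces $R_{\tilde g}(o) = 0$, $\Ric_{\tilde g}(o) = 0$, $\nabla R_{\tilde g}(o) = 0$, and controls the fully symmetric part of $\nabla^2 \Ric_{\tilde g}(o)$ in terms of $\nabla^2 R(o)$. This yields the Taylor expansions $R_{\tilde g}(x) = \tfrac{1}{2}\nabla_k\nabla_l R(o)\, x^k x^l + O(r^3)$ and $\Ric_{\tilde g}(\partial_r,\partial_r)(x) = c\,\nabla_k\nabla_l R(o)\, x^k x^l + O(r^3)$ for a specific constant $c$.

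For a radial $u(r)$, the flat Hessian decomposes as $\partial_i\partial_j u = u''\widehat x_i \widehat x_j + \tfrac{u'}{r}(\delta_{ij} - \widehat x_i \widehat x_j)$ with conformal-normal-coordinate corrections of size $O(r^2) u'' + O(r) u'$, so
\[
-2\Ric_{\tilde g}(\nabla^2 u) = \Ric_{\tilde g}(\partial_r,\partial_r)\,\cc D(u) - \tfrac{2u'}{r}R_{\tilde g} + O(r^2)u'' + O(r)u',
\]
where $\cc D(u) = \tfrac{2u'}{r} - 2u''$. Adding $\tfrac{2}{3}R_{\tilde g} \Delta_{\tilde g} u = \tfrac{2}{3}R_{\tilde g}(u'' + 3u'/r) + \mathrm{error}$ cancels the $\tfrac{2u'}{r}R_{\tilde g}$ piece; the leftover $\tfrac{2}{3}R_{\tilde g} u''$ is absorbed into $O(r^3)|u''|$, and $-\tfrac{1}{3}\langle\nabla R_{\tilde g},\nabla u\rangle$ is absorbed into $O(r^2)|u'|$ since $\nabla R_{\tilde g} = O(r)$. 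Substituting the Taylor expansion of $\Ric_{\tilde g}(\partial_r,\partial_r)$ produces the leading correction $c\,\nabla_k\nabla_l R(o)\, x^k x^l \cc D(u)$, with $c = \tfrac{1}{12}$ coming from the conformal-normal-coordinate identity.

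To finish, one compares $\Delta_{\tilde g}^2 u$ with the flat bi-Laplacian $\Delta_g^2 u$ in these coordinates: since $\tilde g^{ij} - \delta^{ij} = O(r^2)$ and the volume form is flat to infinite order, each of the four derivatives hitting the metric perturbation loses a power of $r$, producing residual contributions of exactly the form $O(r^4)|u'''| + O(r^3)|u''| + O(r^2)|u'| + O(r)u$, which matches the claimed error envelope. The main obstacle is pinning down the exact coefficient $\tfrac{1}{12}$: this relies on the precise traceless symmetrization identity relating $\nabla^2 \Ric_{\tilde g}(o)$ to $\nabla^2 R(o)$ in conformal normal coordinates, which in turn requires the Lee--Parker expansion of $\tilde g_{ij}$ to order $r^4$ together with a careful trace decomposition. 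Once that identity is in place, everything else reduces to careful bookkeeping.
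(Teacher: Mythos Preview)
The paper does not prove this lemma; it is quoted verbatim from Gursky--Malchiodi and used as a black box in the proof of Proposition~\ref{prop-greenfunctionexpansion}. So there is no ``paper's own proof'' to compare against. Your outline is precisely the strategy of the original reference: expand $P_{\tilde g}$ via the contracted Bianchi identity, use the Lee--Parker conformal-normal-coordinate conditions ($\Ric_{\tilde g}(o)=0$, $R_{\tilde g}(o)=0$, $\nabla R_{\tilde g}(o)=0$, $\det\tilde g\equiv 1$), Taylor-expand the curvature coefficients, and compare $\Delta_{\tilde g}^2$ with the flat bi-Laplacian.

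One bookkeeping point deserves care. You write that ``the leftover $\tfrac{2}{3}R_{\tilde g}u''$ is absorbed into $O(r^3)|u''|$,'' but in conformal normal coordinates $R_{\tilde g}(x)=\tfrac12\nabla_k\nabla_l R(o)\,x^kx^l+O(r^3)$ is only $O(r^2)$, so $\tfrac{2}{3}R_{\tilde g}u''$ contributes at the same order as the displayed $\tfrac{1}{12}\nabla_k\nabla_l R(o)\,x^kx^l\,\cc D(u)$ term. If you only keep $\Ric_{\tilde g}(\partial_r,\partial_r)\,\cc D(u)$ and throw the rest away, the coefficients in front of $u'/r$ and $u''$ will not agree and the expression will not collapse to a multiple of $\cc D(u)$. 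The resolution is that this $\tfrac{2}{3}R_{\tilde g}u''$ piece, the $-2\Ric_{\tilde g}(\partial_r,\partial_r)u''$ piece, and the $O(r^2)$ contribution from $\Delta_{\tilde g}^2u-\Delta^2u$ must all be tracked together; the conformal-normal-coordinate identity relating the symmetrized $\nabla_i\nabla_j\Ric_{kl}(o)$ to $\nabla_i\nabla_j R(o)$ is exactly what forces the combined coefficient to be $\tfrac{1}{12}$ on both $u'/r$ and $u''$. You correctly identify this as the crux at the end of your sketch, but the earlier sentence suggesting that term can simply be dropped is not right.
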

\begin{proof}[Proof of Proposition \ref{prop-greenfunctionexpansion}]
The existence of $G$ follows from \cite[Theorem A]{gursky99}.
We apply Lemma \ref{lemma-radial} to $u(r) = \frac{1}{8 \pi^2}\log \frac{1}{r}$.  If $r = d_{\tilde g}(x,y)$,
\begin{align*}
    P_{\tilde g}u = \delta_x(y) +\frac{1}{12} \nabla_k\nabla_l R(o)x^kx^l \cc D\paren{\frac{1}{8\pi^2}\log \frac{1}{r}} + O(r\log r).
\end{align*}
and
\begin{align*}
    \cc D(\log \frac{1}{r}) = O(r^{-2}).
\end{align*}
So
\begin{align*}
     P_{\tilde g}\log \frac{1}{r} = \delta_x(y) + O(1).
\end{align*}
Therefore
\begin{align*}
    P_{\tilde g} \bigg ( G_{\tilde g}(x,y) - \log \frac{1}{r}) = O(1)
\end{align*}
and by elliptic regularity for $P_{\tilde g}$, $ G_{\tilde g}(x,y) - \log \frac{1}{r} \in W^{4,p}_{\text{loc}}$ for all $p > 1$.  This implies that the difference is in $C^1$.

To finish the proof we must show that the same estimate holds for $G$, i.e., the Green's function for the original metric.  Recall that if $\tilde g = e^{2w} g$, then $P_{\tilde g} = e^{-4w}P_g$.
We know then that $e^{-4w}P_{g}(G_{\tilde g} \phi) =  \phi$ for $\phi \in C_c^\infty$.  So 
\begin{align*}
    P_g(G_{\tilde g} (e^{-4w}\phi)) = \phi.
\end{align*}
If we write this in integral form,
\begin{align*}
    \phi(x) &= P_g \int G_{\tilde g}(x,y) \phi(y) e^{-4w(y)} dv_{\tilde g}(y)\\
    &= P_g \int G_{\tilde g}(x,y) \phi(y) dv_g(y).
\end{align*}
Therefore $\tilde G - G \in \ker P_g$ and must be a constant $C(\tilde g)$.  
So 
\begin{align*}
    G(x,y) = \frac{1}{8\pi^2}\log\frac{1}{d_{\tilde g}(x,y)} + h(x,y),
\end{align*}
where $h$ is bounded in $y$ for a fixed $x \in M$.  
Since $\tilde g$ is the conformal normal metric, the construction of which depends only on $g$, we can replace  $d_{\tilde g}(x,y)$ with $ d_g(x,y)$, where the remainder term is absorbed by $h(x,y)$.
Since $M$ is compact, we can cover $M$ with finitely many charts where we have conformal normal coordinates.  On each of these chart, $G(x,y)$ has the desired form and is bounded in $y$ for a fixed $x$.

Finally, to show that $h$ is bounded in terms of $x$ as well, it suffices to show that $h$ is continuous in $x$ and $y$.  Let $x_1,x_2,y_1$ and $y_2$ be points in $M$.  Then
\begin{align*}
    |h(x_1,y_1) - h(x_2,y_2)| &\le |h(x_1,y_1) - h(x_1,y_2)| + |h(x_1,y_2) - h(x_2,y_2)|\\
    &=|h(x_1,y_1) - h(x_1,y_2)| + |h(y_2,x_1) - h(y_2,x_2)|,
\end{align*}
by the symmetry of $G$ in $x$ and $y$.  Since $h(x_1,\cdot)$ and $h(y_2,\cdot)$ are both continuous, we have that $h$ is continuous.  Thus, $\sup_{(x,y) \in M\times M} |h(x,y)|$ is uniformly bounded and the proposition is proved.
\end{proof}

\section{Proof of Theorem \ref{thm-qcflowbilip}}\label{sec-proofqcflow}
We recall the theorem here for convenience.  The proof follows the structure of the argument in \cite{bonkheinonensaksman}.
\begin{named}{Theorem \ref{thm-qcflowbilip}}
Let $g_1$ be a metric so that $Y([g_1]) \ge 0$.  Let $g_0$ be a metric in the conformal class of $g_1$.
Suppose additionally that
\begin{align}
    \int_{S^4}Q_1 dv_1 \ge 0,
\end{align}
where $Q_1$ is the $Q$-curvature of $g_1$.
Let $w \colon S^4 \to \bR$ be the smooth function such that $g_1 = e^{2w}g_0$. 
There exists $\epsilon_1 > 0$ such that if
\begin{align*}
    \alpha = \int_{S^4} |Q_1e^{4w} - Q_0|dv_0 < \epsilon_1,
\end{align*}
then there exists a quasiconformal and bilipschitz map $f \colon (S^4,g_1) \to (S^4,g_0)$ whose bilipschitz constant depends only on $\alpha$ and $g_0$.  More precisely, the bilipschitz constant $L_1$ of $f$ satisfies
\begin{align*}
    L_1 \le 1 + C \alpha,
\end{align*}
where $C$ only depends on $g_0$.
\end{named}

In order to prove this theorem we will need a series of lemmas.  In the following $\Phi \colon (S^4,g_0) \to (S^4,g_0)$ will denote an arbitrary $K$-quasiconformal map.

For a given $K$-quasiconformal map $\Phi\colon (S^4,g_0) \to (S^4,g_0)$, we will need a smooth distance corresponding to $d_{g_0}(\Phi(x),\Phi(y))$ for $x,y \in S^4$. 
Define a bump function $\psi \colon \bR \to [0,\infty)$ so that
\begin{align*}
    \psi(x) &= 1 \text{ on } |x| < \frac{1}{4} \\
    \psi(x) &= 0 \text{ on } |x| > \frac{1}{2}. \\
\end{align*}
Define $\rho \colon S^4\times S^4 \to \bR$ as
\begin{align*}
    \rho(x,y) :=
    \biggl (\int_{S^4} \psi\biggl (\frac{d_{g_0}(x,z)}{d_{g_0}(x,y)}\biggr) J_\Phi(z) dv_0(z)\biggr) ^{1/4},
\end{align*}
where $dv_0$ is the volume form for $g_0$.
The following lemma shows that the function $\rho$ acts as a ``distance" function with respect to $\Phi$.
\begin{lemma}\label{lemma-smoothdistance}
The function $\rho$ satisfies
\begin{itemize}
    \item[(i)] $
    \frac{1}{C(g_0)^K} d_{g_0}(\Phi(x),\Phi(y)) \le \rho(x,y) \le  C(g_0)^K d_{g_0}(\Phi(x),\Phi(y))$ and 
\item[(ii)]
$
    \frac{D_x \rho(x,y)}{\rho(x,y)} \le \frac{C(g_0)^K}{d_{g_0}(x,y)}.
$
\end{itemize}
\end{lemma}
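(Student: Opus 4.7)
The plan is to rewrite $\rho(x,y)^4$ as the $g_0$-volume of the image under $\Phi$ of a region comparable to a metric ball of radius $d_{g_0}(x,y)$, and then invoke Proposition \ref{prop-jacobiancomparability} for both conclusions.

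For (i), I first observe that since $J_\Phi$ is the Jacobian of $\Phi$ with respect to $g_0$, a change of variables gives $\int_U J_\Phi(z) dv_0(z) = |\Phi(U)|_{g_0}$ for any measurable $U$. Since $\psi \equiv 1$ on $[0,1/4]$ and $\psi \equiv 0$ outside $[0,1/2]$, the integrand in the definition of $\rho(x,y)^4$ is supported in $B(x, d_{g_0}(x,y)/2)$ and bounds $J_\Phi$ from below by its restriction to $B(x, d_{g_0}(x,y)/4)$. Consequently
\begin{equation*}
|\Phi(B(x, d_{g_0}(x,y)/4))|_{g_0} \le \rho(x,y)^4 \le |\Phi(B(x, d_{g_0}(x,y)/2))|_{g_0}.
\end{equation*}
Taking fourth roots and applying Proposition \ref{prop-jacobiancomparability} (with $c = 1/4$ and $c = 1/2$) yields (i), with the doubling estimate of Proposition \ref{prop-jacobiancomparability} absorbing the difference between the two radii into a $C(g_0)^K$ factor.

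For (ii), I differentiate the defining integral in $x$. Since $\psi$ is smooth with compact support and the function $z \mapsto d_{g_0}(x,z)$ is Lipschitz in $x$ with constant one (away from the cut locus, which is measure zero), the standard differentiation under the integral is justified and yields
\begin{equation*}
4\rho^3 D_x\rho = \int_{S^4} \psi'\!\left(\tfrac{d_{g_0}(x,z)}{d_{g_0}(x,y)}\right)\left[\tfrac{D_x d_{g_0}(x,z)}{d_{g_0}(x,y)} - \tfrac{d_{g_0}(x,z)}{d_{g_0}(x,y)^2} D_x d_{g_0}(x,y)\right] J_\Phi(z)\, dv_0(z).
\end{equation*}
The support of $\psi'$ restricts $z$ to the annulus $\{1/4 \le d_{g_0}(x,z)/d_{g_0}(x,y) \le 1/2\}$, on which the factor in brackets is pointwise bounded by $C/d_{g_0}(x,y)$. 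Therefore
\begin{equation*}
|4\rho^3 D_x\rho| \le \frac{C}{d_{g_0}(x,y)}\,|\Phi(B(x,d_{g_0}(x,y)/2))|_{g_0} \le \frac{C(g_0)^K}{d_{g_0}(x,y)}\,|\Phi(B(x,d_{g_0}(x,y)/4))|_{g_0},
\end{equation*}
where the last step uses the doubling estimate from Proposition \ref{prop-jacobiancomparability} a bounded number of times. Combining this with the lower bound $\rho(x,y)^4 \ge |\Phi(B(x, d_{g_0}(x,y)/4))|_{g_0}$ from the previous display, dividing by $\rho^4$ gives the claimed bound on $D_x\rho/\rho$.

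The only subtle point is the justification of the differentiation under the integral near the cut locus of $g_0$ on $S^4$; since that set has measure zero and the integrand is a bounded Lipschitz expression in $x$, a routine dominated convergence argument handles it. The remaining estimates are direct consequences of the bump support of $\psi, \psi'$ and the two inequalities in Proposition \ref{prop-jacobiancomparability}.
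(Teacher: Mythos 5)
Your proof is correct and follows essentially the same route as the paper: both parts rest on identifying $\rho^4$ with the $\Phi$-image volume of balls of radii $d_{g_0}(x,y)/4$ and $d_{g_0}(x,y)/2$, bounding the derivative of the cutoff by $C(g_0)/d_{g_0}(x,y)$ on its annular support, and invoking the comparability and doubling estimates of Proposition \ref{prop-jacobiancomparability}. Your explicit remark on differentiating under the integral near the cut locus is a detail the paper leaves implicit, but it does not change the argument.
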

\begin{proof}
To see (i) note that
\begin{align*}
    \rho(x,y) \le \abs{\Phi\paren{B\paren{x,\frac{d_{g_0}(x,y)}{2}}}}_{g_0}^{1/4} \le C(g_0)^K d_{g_0}(\Phi(x),\Phi(y))
\end{align*}
and
\begin{align*}
    \rho(x,y) \ge \abs{\Phi\paren{B\paren{x,\frac{d_{g_0}(x,y)}{4}}}}_{g_0}^{1/4}\ge \frac{1}{C(g_0)^K}d_{g_0}(\Phi(x),\Phi(y)),
\end{align*}
where the second inequality in both lines follows from Proposition \ref{prop-jacobiancomparability}.
To show (ii),
we compute that

\begin{align*}
    D_x \psi\bigg ( \frac{d_{g_0}(x,z)}{d_{g_0}(x,y)}\bigg ) = (D_x \psi)\bigg ( \frac{d_{g_0}(x,z)}{d_{g_0}(x,y)}\bigg ) D_x \bigg ( \frac{d_{g_0}(x,z)}{d_{g_0}(x,y)}\bigg ).
\end{align*}
\begin{align*}
   D_x \bigg ( \frac{d_{g_0}(x,z)}{d_{g_0}(x,y)}\bigg ) = \frac{1}{d_{g_0}(x,y)}D_x(d_{g_0}(x,z)) - \frac{d_{g_0}(x,z)}{d_{g_0}(x,y)^2} D_x(d_{g_0}(x,y)).
\end{align*}
The terms $|D_x(d_{g_0}(x,y))|$ and $|D_x(d_{g_0}(x,z))|$ are bounded on $S^4$.
When $z \notin B(x,d_{g_0}(x,y))$, then the $\psi$ in the integral is $0$. 
If $z \in B(x,d_{g_0}(x,y))$, then $d_{g_0}(x,z) \le d_{g_0}(x,y)$. So
\begin{align*}
     \abs{D_x \bigg ( \frac{d_{g_0}(x,z)}{d_{g_0}(x,y)}\bigg ) }  \le \frac{C(g_0)}{d_{g_0}(x,y)}.
\end{align*}
Therefore,
\begin{align}\label{eq-psibound}
    \abs{D_x \psi \bigg ( \frac{d_{g_0}(x,z)}{d_{g_0}(x,y)} \bigg )}\le  \frac{C(g_0)}{d_{g_0}(x,y)}.
\end{align}
We can now bound the left hand side of (ii).
\begin{align*}
     \frac{D_x \rho(x,y)}{\rho(x,y)} &= \frac{1}{4}   \paren{\int_{S^4} D_x\psi\biggl (\frac{d_{g_0}(x,z)}{d_{g_0}(x,y)}\biggr) J_\Phi(z) dv_0(z)} \bigg / \paren{\int_{S^4} \psi\biggl (\frac{d_{g_0}(x,z)}{d_{g_0}(x,y)}\biggr) J_\Phi(z) dv_0(z)} \\
     &\le \frac{C(g_0)}{d_{g_0}(x,y)} \frac{\paren{\int_{B(x,d_{g_0}(x,y)/2)} J_\Phi(z) dv_0(z)}}{\paren{\int_{B(x,d_{g_0}(x,y)/4} J_\Phi(z) dv_0(z)}},
\end{align*}
where in the second inequality we used the support of $\psi$ and \eqref{eq-psibound}.
By Proposition \ref{prop-jacobiancomparability},
\begin{align*}
    \frac{D_x \rho(x,y)}{\rho(x,y)}  \le  \frac{C(g_0)^K}{d_{g_0}(x,y)}.
\end{align*}
\end{proof}

\begin{lemma}\label{lemma-potentialvectorfield}
For every $y \in S^4$, there exists a vector field $V(x,y)$ on $S^4$ such that
\begin{itemize}
    \item[(i)] $D_x V$ exists for all $y \in S^4$ and is integrable in $x$ and in $(x,y)$.
    
    \item[(ii)] $|\operatorname{div}_{g_0} V(x,y) - 4G(\Phi(x),y)| \le C(g_0,K)$, where the constant depends only on the metric $g_0$ and the quasiconformal constant $K$ of $\Phi$.
    
    \item[(iii)] $\|SV\|_\infty \le C(g_0)^K$, where $C(g_0)$ depends only on $g_0$.
\end{itemize}
\end{lemma}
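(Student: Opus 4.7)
The plan is to construct $V(\cdot,y)$ as an approximately radial vector field centered at the preimage $y^\ast:=\Phi^{-1}(y)$, with magnitude calibrated by $-\log\rho(x,y^\ast)/(8\pi^2)$. The logarithmic magnitude is what will reproduce the leading singularity of $4G(\Phi(x),y)$ coming from Proposition~\ref{prop-greenfunctionexpansion}, while the approximate radial symmetry is what will keep the Ahlfors strain tensor bounded. Fix once and for all a smooth cutoff $\eta\colon[0,\infty)\to[0,1]$ that is identically $1$ on $[0,\iota/4]$ and identically $0$ on $[\iota/2,\infty)$, where $\iota$ is the injectivity radius of $(S^4,g_0)$. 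For each $y\in S^4$, set
\begin{align*}
\xi(x,y^\ast):=\nabla_x\paren{\tfrac{1}{2}d_{g_0}(x,y^\ast)^2},\qquad V(x,y):=-\frac{1}{8\pi^2}\eta(d_{g_0}(x,y^\ast))\log\rho(x,y^\ast)\,\xi(x,y^\ast).
\end{align*}
In $g_0$-normal coordinates at $y^\ast$, with $r:=d_{g_0}(x,y^\ast)$, one has $\xi=x^i\partial_i+O(r^3)$, so $|\xi|\sim r$, $\diver_{g_0}\xi=4+O(r^2)$, and the trace-free symmetric part $S\xi$ of $\nabla\xi$ satisfies $|S\xi|=O(r^2)$.

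Property (i) is then immediate: $V$ is smooth on the complement of $y^\ast$, and the pointwise bound $|D_xV|\lesssim C(g_0)^K+\log(1/r)$ (using Lemma~\ref{lemma-smoothdistance}(ii) to handle $|\nabla\rho|/\rho$) is locally integrable on $S^4$ uniformly in $y$, hence integrable on $S^4\times S^4$. For property (ii), Leibniz gives $\diver_{g_0}V=\langle\nabla f,\xi\rangle+f\diver_{g_0}\xi$ with $f:=-\eta\log\rho/(8\pi^2)$. The first piece is bounded by $C(g_0)^K$ since the blow-up $|\nabla\rho|/\rho\lesssim C(g_0)^K/r$ from Lemma~\ref{lemma-smoothdistance}(ii) is cancelled by $|\xi|\lesssim r$; the second piece equals $\tfrac{1}{2\pi^2}\log(1/\rho(x,y^\ast))+O(1)$ on the support of $\eta$, since $r^2\log(1/r)\to 0$ at the pole. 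Replacing $\log(1/\rho(x,y^\ast))$ by $\log(1/d_{g_0}(\Phi(x),y))$ via Lemma~\ref{lemma-smoothdistance}(i) at the cost of $O(K\log C(g_0))$, and identifying the result with $4G(\Phi(x),y)$ up to the bounded term $4h(\Phi(x),y)$ from Proposition~\ref{prop-greenfunctionexpansion}, gives (ii) on the support of $\eta$. Off the support of $\eta$, $V\equiv 0$; the quasisymmetric distortion bound underlying Proposition~\ref{prop-jacobiancomparability} keeps $d_{g_0}(\Phi(x),y)$ bounded below in terms of $(g_0,K)$, so that $4G(\Phi(x),y)$ is itself bounded by $C(g_0,K)$.

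The main obstacle is property (iii). Decompose
\begin{align*}
SV=f\,S\xi+\tfrac{1}{2}\paren{df\otimes\xi^\flat+\xi^\flat\otimes df}-\tfrac{1}{4}\langle df,\xi\rangle\,g_0.
\end{align*}
The last two terms are pointwise $\le C(g_0)^K$ by the same cancellation $|\nabla\rho|/\rho\cdot|\xi|\lesssim C(g_0)^K$ used in (ii). The genuinely delicate term is $fS\xi$: although $|f|\sim\log(1/r)$ blows up at $y^\ast$, the approximate radial symmetry forces $|S\xi|=O(r^2)$ there, so $|fS\xi|=O(r^2\log(1/r))$ stays bounded uniformly. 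The subtlety is to make the $O(r^2)$ estimate for $S\xi$ quantitative and uniform in $y^\ast$ (using only the curvature data of $g_0$), and to splice the near-pole estimate with the purely smooth bounds in the cutoff annulus without inflating the final constant beyond the stated form $C(g_0)^K$. Carrying this out rigorously requires a precise Taylor expansion of $d_{g_0}(\cdot,y^\ast)^2$ in normal coordinates and a careful localization across the transition between the two regimes.
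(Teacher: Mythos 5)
Your construction is essentially the paper's: the paper takes $V(x,y)=\tilde G(x,y)\,\psi(d_{g_0}(x,\Phi^{-1}(y))/r)\,(x-\Phi^{-1}(y))$ in normal coordinates about $\Phi^{-1}(y)$, with $\tilde G=\frac{1}{8\pi^2}\log(1/\rho)+h$, and bounds $SV$ by exactly your cancellations ($|\nabla\rho|/\rho\cdot|\xi|\lesssim C(g_0)^K$ from Lemma \ref{lemma-smoothdistance}, and the near-conformal-Killing property of the radial field against the logarithm). The ``subtlety'' you defer at the end is handled in the paper by a three-line computation in normal coordinates showing $|S(x-\Phi^{-1}(y))|\le C(g_0)\,d_{g_0}(x,\Phi^{-1}(y))$ (even $O(r)$ suffices, since $r\log(1/r)$ is bounded), so the proposal is correct and follows the same route.
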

Note that if $g_0$ locally admits a conformal Killing field $V'$ and $\Phi(x) = x$, then $V(x,y) = G(x,y)(V'(x)-V'(y))$.
\begin{proof}
Recall that by Proposition \ref{prop-greenfunctionexpansion},
\begin{align*}
    G(x,y) = \frac{1}{8\pi^2} \log \frac{1}{d_{g_0}(x,y)} + h(x,y),
\end{align*}
where $h$ is smooth and bounded on $S^4$.
Define $\tilde G(x,y)$ as
\begin{align*}
    \tilde G(x,y) = \frac{1}{8\pi^2} \log \frac{1}{\rho(x,\Phi^{-1}(y))} + h(x,y).
\end{align*}
By Lemma \ref{lemma-smoothdistance}, (i),  we have that
\begin{align}\label{eq-modifiedgreenclosetogreen}
    |\tilde G(x,y) - G(\Phi(x),y)| \le C(g_0,K)
\end{align}
and by Lemma \ref{lemma-smoothdistance}, (ii),
\begin{align}\label{eq-phigreenfunctionderivbound}
    |D_x \tilde G(x,y)|  \le \frac{C(g_0)^K}{d_{g_0}(x,\Phi^{-1}(y))}.
\end{align}
Define
\begin{align}\label{eq-Vdefinition}
    V(x,y) := \tilde G(x,y) \psi\paren{\frac{d_{g_0}(x,\Phi^{-1}(y))}{r}} (x - \Phi^{-1}(y)),
\end{align}
where $r$ is the injectivity radius for $g_0$ and $\psi$ is a non-negative smooth function on $\bb R$ such that $\psi = 1$ on $|x| < 1/4$ and $\psi = 0$ on $|x| > 1/2$.  The difference, $x-\Phi^{-1}(y)$, is taken in normal coordinates around $\Phi^{-1}(y)$, which is justified by our choice of $r$.
We compute,
\begin{align}\label{eq-DV}
\begin{split}
    D_x V(x,y) = & D_x \tilde G(x,y)(x-\Phi^{-1}(y)) \psi\paren{\frac{d_{g_0}(x,\Phi^{-1}(y))}{r}} \\
    & + \tilde G(x,y) (x-\Phi^{-1}(y)) D_x\psi\paren{\frac{d_{g_0}(x,\Phi^{-1}(y))}{r}} \\
    &+ \tilde G(x,y) \psi\paren{\frac{d_{g_0}(x,\Phi^{-1}(y))}{r}}\operatorname{Id}.
    \end{split}
\end{align}
By Proposition \ref{prop-greenfunctionexpansion}, $G$ and $D_x G(x,y) (x-y)$ are in $L^1(S^4)$ and we see that (i) is satisfied.

We next compute $\operatorname{div}_{g_0} V$.
\begin{align*}
    \operatorname{div}_{g_0} V = &D_x \tilde G(x,y)\psi\paren{\frac{d_{g_0}(x,\Phi^{-1}(y))}{r}}(x-\Phi^{-1}(y))\\
    &+\tilde G(x,y)D_x \psi\paren{\frac{d_{g_0}(x,\Phi^{-1}(y))}{r}} (x-\Phi^{-1}(y)) \\
    &+  \tilde G(x,y)\psi\paren{\frac{d_{g_0}(x,\Phi^{-1}(y))}{r}}\operatorname{div}_{g_0}(x-\Phi^{-1}(y)).
\end{align*}
By Proposition \ref{prop-greenfunctionexpansion}, $G(x,y)(x-y)$ and $D_x G(x,y)(x-y)$ are bounded in terms of $g_0$.  So by \eqref{eq-modifiedgreenclosetogreen} the first two terms are bounded.  In order to show (ii) we need to bound the term
\begin{align*}
    |4G(\Phi(x),y) - \tilde G(x,y)\psi\paren{\frac{d_{g_0}(x,\Phi^{-1}(y))}{r}}\operatorname{div}_{g_0}(x-\Phi^{-1}(y))|.
\end{align*}
On the support of $\psi$,
\begin{align*}
    \diver_{g_0}(x-\Phi^{-1}(y)) = 4 + O(d_{g_0}(x,\Phi^{-1}(y)))
\end{align*}
and so by \eqref{eq-modifiedgreenclosetogreen} and Proposition \ref{prop-greenfunctionexpansion} we can conclude that
\begin{align*}
    |4G(\Phi(x),y) - \tilde G(x,y)\psi\paren{\frac{d_{g_0}(x,\Phi^{-1}(y))}{r}}\operatorname{div}_{g_0}(x-\Phi^{-1}(y))| \le C.
\end{align*}

To show (iii) note that
\begin{align*}
    \|SV\|_\infty  \le &\|D_x \tilde G(x,y) \psi\paren{\frac{d_{g_0}(x,\Phi^{-1}(y))}{r}}(x-\Phi^{-1}(y))\|_\infty \\
    &+ \|\tilde G(x,y)S(\psi\paren{\frac{d_{g_0}(x,\Phi^{-1}(y))}{r}} (x-\Phi^{-1}(y)))\|_\infty.
\end{align*}
The first term is bounded in terms of $C(g_0)^K$ by Equation \eqref{eq-phigreenfunctionderivbound}.
For the second term, we first show that
\[
\|S(\psi\paren{\frac{d_{g_0}(x,\Phi^{-1}(y))}{r}}(x-\Phi^{-1}(y)))\|_\infty < C(g_0) d_{g_0}(x,\Phi^{-1}(y)).
\]
This is clear if the derivative lands on $\psi$.  The only term to consider then is $S(x - \phi^{-1}(y))$, which can be written in coordinates around $\Phi^{-1}(y)$ as
\begin{align*}
    S(x-\Phi^{-1}(y))_{ij} &= \frac{1}{2}(\partial_i(x^l) (g_0)_{l j} + \partial_j(x^l) (g_0)_{i l} + (x^l-\Phi^{-1}(y)^l)(\partial_l g_0)_{ij} - \frac{1}{4} \operatorname{div}_{g_0}(x-\Phi^{-1}(y)) (g_0)_{ij} \\
    &= (g_0)_{ij} - \frac{1}{4} \operatorname{div}_{g_0}(x-\Phi^{-1}(y)) (g_0)_{ij} + (x^l-\Phi^{-1}(y)^l)(\partial_l g_0)_{ij} \\
    &= -(x-\Phi^{-1}(y))^l\frac{\partial \log(\sqrt{\det g_0})}{\partial x^l}(g_0)_{ij} + (x^l-\Phi^{-1}(y)^l)(\partial_l g_0)_{ij}.
\end{align*}
So
\begin{align*}
    |S(x-\Phi^{-1}(y))_{ij}| \le C(g_0) d_{g_0}(x,\Phi^{-1}(y)).
\end{align*}
And therefore,
\begin{align*}
    |\tilde G(x,y)S(\psi\paren{\frac{d_{g_0}(x,\Phi^{-1}(y))}{r}} (x-\Phi^{-1}(y)))| &\le  \paren{\frac{1}{8\pi^2} \log \frac{1}{\rho(x,\Phi^{-1}(y))} + h(x,y)}C(g_0) d_{g_0}(x,\Phi^{-1}(y))\\
   &\le C(g_0)^K d_{g_0}(x,\Phi^{-1}(y))\log \frac{1}{d_{g_0}(x,\Phi^{-1}(y))} \\
   & \le C(g_0)^K,
\end{align*}
where the second inequality is due to Lemma \ref{lemma-smoothdistance}, (i).
This gives that
\begin{align*}
    \|SV\|_\infty \le C(g_0)^K.
\end{align*}
\end{proof}

We are now ready to define the vector field along which we will flow to construct the bilipschitz map. 
In what follows define for any $\eta \in C^\infty(S^4)$
\begin{align*}
    L\eta(x) := \int_{S^4} G(x,y) \eta(y) dv_0(y).
\end{align*}
\begin{lemma}\label{lemma-finalvectorfield}
Let $\eta \in C^\infty(S^4)$.  There exists a continuous vector field $v \colon S^4 \to S^4$ with integrable derivatives such that
\begin{align*}
    (L\eta)\circ \Phi = \frac{1}{4} \operatorname{div} v + b.
\end{align*}
Additionally,
\begin{itemize}
    \item[(i)] $\|b\|_\infty < C(g_0,K) \|\eta\|_1$ and
    \item[(ii)] $\|Sv\|_\infty <  C_1(g_0)^K \|\eta\|_1$,
    where $C_1(g_0)$ does not depend on $K$.
\end{itemize}
\end{lemma}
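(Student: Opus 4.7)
The natural candidate is to superpose the vector fields $V(x,y)$ from Lemma \ref{lemma-potentialvectorfield} against the weight $\eta(y)$. That is, I would define
\begin{equation*}
    v(x) := \int_{S^4} V(x,y)\, \eta(y)\, dv_0(y),
\end{equation*}
and then read off each of the required properties from the corresponding bullet in Lemma \ref{lemma-potentialvectorfield}. The key point is that divergence, derivative and the Ahlfors strain operator $S$ are all linear in $v$ and act only in the $x$ variable, so (subject to a Fubini justification) they commute with the $y$-integration against $\eta$.

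First I would verify that $v$ is continuous with integrable derivative, so that $\operatorname{div} v$ and $Sv$ are meaningful. This follows from Lemma \ref{lemma-potentialvectorfield}(i) together with the explicit bounds on $V$ and $D_xV$ coming from the proof of that lemma (the logarithmic bound on $\tilde G$ and the $C(g_0)^K/d_{g_0}(x,\Phi^{-1}(y))$ bound on $D_x\tilde G$ in \eqref{eq-phigreenfunctionderivbound}, noting that the factor $(x-\Phi^{-1}(y))$ tames the latter singularity). A Fubini argument then lets me interchange $D_x$ with $\int dv_0(y)$, so that $D_xv(x) = \int D_xV(x,y)\eta(y) dv_0(y)$, and likewise for $\operatorname{div}_{g_0}$ and $S_{g_0}$.

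Next, using Lemma \ref{lemma-potentialvectorfield}(ii), I write
\begin{equation*}
    \operatorname{div}_{g_0} v(x) = \int_{S^4} \operatorname{div}_{g_0} V(x,y)\,\eta(y)\,dv_0(y) = 4\int_{S^4} G(\Phi(x),y)\eta(y)\, dv_0(y) + \tilde b(x),
\end{equation*}
where
\begin{equation*}
    \tilde b(x) = \int_{S^4}\bigl(\operatorname{div}_{g_0}V(x,y) - 4G(\Phi(x),y)\bigr)\eta(y)\, dv_0(y),
\end{equation*}
and by Lemma \ref{lemma-potentialvectorfield}(ii) the integrand is pointwise bounded by $C(g_0,K)|\eta(y)|$, yielding $\|\tilde b\|_\infty \le C(g_0,K)\|\eta\|_1$. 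Recognizing the first integral as $4(L\eta)(\Phi(x))$ and setting $b = -\tfrac14\tilde b$ gives the identity $(L\eta)\circ\Phi = \tfrac14\operatorname{div} v + b$ with the required bound (i). For (ii), the linearity of $S_{g_0}$ together with Lemma \ref{lemma-potentialvectorfield}(iii) gives
\begin{equation*}
    \|S_{g_0}v\|_\infty \le \int_{S^4} \|S_{g_0}V(\cdot,y)\|_\infty |\eta(y)| dv_0(y) \le C_1(g_0)^K\|\eta\|_1,
\end{equation*}
with $C_1(g_0)$ independent of $K$, as required.

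The only genuinely delicate point is the interchange of differentiation and integration and the verification that $v$ is a \emph{continuous} vector field; this requires dominated convergence using the logarithmic/$1/d_{g_0}$ bounds on $V$ and $D_xV$ that are built into Lemma \ref{lemma-potentialvectorfield}. Once that is in place, parts (i) and (ii) are immediate linearity consequences of Lemma \ref{lemma-potentialvectorfield}(ii) and (iii), and there is no further analytic obstacle.
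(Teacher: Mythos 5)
Your proposal is correct and is essentially identical to the paper's proof: the paper also defines $v(x) = \int_{S^4} V(x,y)\eta(y)\,dv_0(y)$ and deduces (i) and (ii) by linearity from the corresponding parts of Lemma \ref{lemma-potentialvectorfield}. Your write-up is in fact more careful than the paper's about the Fubini/dominated-convergence justification for commuting $D_x$ with the $y$-integration.
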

\begin{proof}
Define
\begin{align*}
     v(x) = \int_{S^4} V(x,y) \eta(y) dv_0(y),
\end{align*}
where $V(x,y)$ was constructed in Lemma \ref{lemma-potentialvectorfield}.  The definition and Lemma \ref{lemma-potentialvectorfield} immediately give that $v$ is continuous with integrable derivatives and that (ii) is satisfied.

Lemma \ref{lemma-potentialvectorfield} and
\begin{align*}
    \operatorname{div} v(x) = \int_{S^4} \operatorname{div} V(x,y) \eta(y) dv_0(y)
\end{align*}
together imply (i).
\end{proof}
We will need the following technical lemma that is also used in \cite[Lemma 6.1]{bonkheinonensaksman}.  
\begin{lemma}\label{lemma-constantiterbound}
Let $\Lambda(s) = C^s$, for some constant $C \ge 1$.
For $k \in \bN$, suppose that there exists positive constants $M_k(j)$ for $j \in \{0,\dots,k\}$ such that $M_k(0) = 0$ and
\begin{align*}
   0 \le M_k(j)-M_k(j-1) \le \frac{\epsilon}{k}\Lambda(M_k(j-1)),
\end{align*}
where $\epsilon < \int_0^\infty \frac{ds}{\Lambda(s)}$.
Then $M_k(k) \le M_0(1)$, where $M_0(1)$ is the unique solution to
\begin{align*}
    \frac{d}{ds}M_0(s) = \epsilon \Lambda(M_0(s)),\quad M_0(0) = 0.
\end{align*}
\end{lemma}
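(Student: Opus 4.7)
The plan is to prove this by a direct discrete-versus-continuous comparison: induct on $j$ to show that $M_k(j) \le M_0(j/k)$, then set $j = k$.

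First I would verify that $M_0(1)$ is well-defined under the hypothesis $\epsilon < \int_0^\infty ds/\Lambda(s)$. The ODE $M_0'(s) = \epsilon \Lambda(M_0(s))$ with $M_0(0) = 0$ can be integrated by separation of variables, yielding $\int_0^{M_0(s)} du/\Lambda(u) = \epsilon s$. Since the left side is a strictly increasing continuous function of the upper limit with total integral exceeding $\epsilon$, the equation has a unique finite solution at $s = 1$, and moreover $M_0$ is smooth and strictly increasing on $[0,1]$.

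Next, the key inductive claim: $M_k(j) \le M_0(j/k)$ for $j = 0, 1, \dots, k$. The base case $j = 0$ is immediate. For the inductive step, assume $M_k(j-1) \le M_0((j-1)/k)$. Using the hypothesis together with monotonicity of $\Lambda$ (which holds since $C \ge 1$ makes $\Lambda(s) = C^s$ non-decreasing), I get
\[
M_k(j) \le M_k(j-1) + \frac{\epsilon}{k}\Lambda(M_k(j-1)) \le M_0\paren{\tfrac{j-1}{k}} + \frac{\epsilon}{k}\Lambda\paren{M_0\paren{\tfrac{j-1}{k}}}.
\]
To close the induction I need the right-hand side to be bounded by $M_0(j/k)$. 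But
\[
M_0\paren{\tfrac{j}{k}} - M_0\paren{\tfrac{j-1}{k}} = \int_{(j-1)/k}^{j/k} \epsilon \Lambda(M_0(s))\, ds \ge \frac{\epsilon}{k}\Lambda\paren{M_0\paren{\tfrac{j-1}{k}}},
\]
where the inequality uses that $M_0$ is increasing and hence $\Lambda \circ M_0$ is increasing on the interval $[(j-1)/k, j/k]$. This gives exactly what is needed.

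Setting $j = k$ yields $M_k(k) \le M_0(1)$, as desired. There is no real obstacle here: the argument is a textbook discrete Gronwall comparison, and the only subtle point is to invoke the integrability condition on $\epsilon$ at the start so that $M_0(1)$ exists as a finite number (without this, the right-hand side of the claimed inequality could be interpreted as $+\infty$ and the statement would be vacuous). The monotonicity of $\Lambda$, which is automatic for $\Lambda(s) = C^s$ with $C \ge 1$, is what makes both the induction step and the Riemann-sum lower bound on $M_0(j/k) - M_0((j-1)/k)$ go through cleanly.
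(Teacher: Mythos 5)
Your proof is correct and follows exactly the paper's own argument: induction on $j$ to establish $M_k(j)\le M_0(j/k)$, using the monotonicity of $\Lambda$ and the integral form of the ODE to absorb the discrete increment into $M_0(j/k)-M_0((j-1)/k)$. The only addition is your explicit verification that $M_0(1)$ is finite, which the paper states more briefly but relies on in the same way.
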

\begin{proof}
First note that $M_0(s)$ exists for $0 \le s \le 1$ since $\epsilon < \int_0^\infty \frac{ds}{\Lambda(s)}$.
We prove by induction on $j$ that $M_k(j) \le M_0(j/k)$.  When $j = 0$ this is clear.  We have that
\begin{align*}
    M_k(j) &\le M_k(j-1) + \frac{\epsilon}{k}\Lambda(M_k(j-1)) \\
    &\le M_0((j-1)/k) + \frac{\epsilon}{k}\Lambda(M_0((j-1)/k)).
\end{align*}
by induction and the monotonicity of $\Lambda$.  Applying the monotonicity of $\Lambda$ again we have that,
\begin{align*}
    M_k(j) &\le M_0((j-1)/k) + \epsilon\int_{(j-1)/k}^{j/k} \Lambda(M_0(s))ds \\
    &= M_0(j/k).
\end{align*}
\end{proof}
We note that the construction of $M_0$ gives that
\begin{align}\label{eq-boundoforM}
    M_0(1) \le C' \epsilon,
\end{align}
where $C'$ depends on the constant $C$ in Lemma \ref{lemma-constantiterbound}.

\begin{proof}[Proof of Theorem \ref{thm-qcflowbilip}]

In order to prove Theorem \ref{thm-qcflowbilip}
we would like to solve
\begin{align}\label{eq-ODE}
    \frac{d}{dt} f_t(x) = v_t(f_t(x)),
\end{align}
where
\begin{align*}
    \frac{1}{4}\operatorname{div}v_t \circ f_t  = L\eta + \text{bounded term},
\end{align*}
for a given $\eta \in C^\infty(S^4)$.
Where we recall that
\begin{align*}
    L\eta = \int_{S^4} G(x,y)\eta(y)dv_0(y).
\end{align*}
The divergence condition for $v$ depends on $f_t$ so we need to define the vector field simultaneously with the solution to the ODE.
Following the proof in \cite{bonkheinonensaksman}, we discretize the time interval.
Let $v$ be the vector field from Lemma \ref{lemma-finalvectorfield}, with $\Phi(x) = x$.  By Proposition \ref{prop-qcflowmfd}, there exists a solution to the ODE in \eqref{eq-ODE} for time $[0,1/k]$, where $k \in \bb N$. 
The solutions are quasiconformal, which we denote $f_{1/k,k}$. We next construct a new vector field $v_{1/k,k}$, which is again defined from Lemma \ref{lemma-finalvectorfield}, with the quasiconformal map $\Phi = f_{1/k,k}^{-1}$.

We continue this way until we get a map $f_{1,k}$.
That is, for each time period $[\frac{j}{k},\frac{j+1}{k}]$ we solve the ODE
\begin{align*}
    \frac{d}{dt}\Psi_{t,j,k}(x) = v_{j/k,k}(\Psi_{t,j,k}(x))
\end{align*}
with $\Psi_{0,j,k}(x) = x$. We then define $f_{(j+1)/k,k} := \Psi_{1/k,j,k} \circ f_{j/k,k}$.
This can be done for all $k \in \mathbb N$ and so we have a family of quasiconformal maps $f_{1,k}$, for $k \in \bb N$. 
We would like to show that they have a bounded quasiconformal constant and that there is a converging subsequence.  We will then show that the limiting map will satisfy that $\log J_f = L\eta + \text{a bounded term}$.

We first show that the quasiconformal constant stays bounded along each iteration.  Define the increasing function $C \colon [1,\infty) \to [1,\infty) $ as the function that takes a quasiconformal dilatation $H$ and outputs $C_1(g_0)$ from Lemma \ref{lemma-finalvectorfield}, (ii).  The function $C$ depends only on $g_0$.
By Lemma \ref{lemma-finalvectorfield}, 
\begin{align*}
    \|Sv_{1/k,k}\|_\infty < C(g_0)^{H\paren{1/k}}\|\eta\|_1,
\end{align*}
where $v_{1/k,k}$ is defined as above and $H(1/k)$ is the quasiconformal constant of $f_{1/k,k}$.
If we flow along the ODE in \eqref{eq-ODE} to construct $f_{2/k,k}$, then $f_{2/k,k}$ will be quasiconformal with a constant
\begin{align*}
    e^{\frac{4}{k} C(g_0)^{H(1/k)}\|\eta\|_1}H\paren{1/k},
\end{align*}
by Proposition \ref{prop-qcflowmfd}.
Inductively,
\begin{align*}
    \|Sv_{j/k,k}\|_\infty < C(g_0)^{H(j/k)}\|\eta\|_1
\end{align*}
and the quasiconformal constant of $f_{(j+1)/k,k}$ satisfies
\begin{align*}
    H((j+1)/k) \le H(j/k) e^{\frac{4}{k} C(g_0)^{H(j/k)}\|\eta\|_1}.
\end{align*}
So
\begin{align*}
    \log H((j+1)/k) - \log H(j/k) \le \frac{4}{k}C(g_0)^{H(j/k)}\|\eta\|_1.
\end{align*}
So if $4\|\eta\|_1 < \int_0^\infty C(g_0)^{-s}ds$, then by Lemma \ref{lemma-constantiterbound}, $H(j/k)$ is bounded independently of $j$ and $k$.  In fact, we have that
\begin{align*}
    H(1) \le C'\|\eta\|_1,
\end{align*}
by \eqref{eq-boundoforM}, where $C'$ depends only on $g_0$.

By Proposition \ref{prop-qcmapnormalfamily}, there exists a uniformly convergent subsequence of $\{f_{1,k}\}_{k \in \bb N}$.  
The Jacobians of $f_{1,k}$ will converge weakly as well (see \cite[p. 159]{rickmanbook}, the proof is for the standard metric, but applies in our setting as well).
To make this more precise, consider the behavior of $J_{j/k,k} := J_{f_{j/k,k}}$.
By Proposition \ref{prop-jacobianflowmfd}, the Jacobian of $f_{1,k}$ satisfies
\begin{align}\label{eq-decomposeofJ1}
   \log  J_{1,k} = \sum_{j=0}^{k-1} \int_{j/k}^{(j+1)/k} \operatorname{div}(v_{j/k,k})\circ f_{t} dt
\end{align}
and, by Lemma \ref{lemma-finalvectorfield},
\begin{align*}
    \operatorname{div}(v_{j/k,k})  = 4(L\eta) \circ f_{j/k,k}^{-1} + b_{j,k},
\end{align*}
where $b$ is bounded in terms of $g_0$ and $H = \sup_{j,k} H(j/k)$.  
Let $t \in [j/k,(j+1)/k]$,
\begin{align*}
    |4(L\eta)\circ f_{j/k,k}^{-1}(f_t(x)) - 4L\eta(x)| \le C(\eta,g,H) |f_{j/k,k}^{-1}(f_t(x)) - x|,
\end{align*}
since $\eta$ is smooth and so $L\eta$ is also smooth and therefore Lipschitz on $S^4$.
\begin{align}\label{eq-calcbeforelemma}
    |f_{j/k,k}^{-1}(f_t(x)) - x| = |f_{j/k,k}^{-1}(y) - f_t^{-1}(y)| \le C(\eta,g,H) \|v_{j/k,k}\|_\infty \frac{1}{k}.
\end{align}
Recall that
\begin{align*}
    v_{j,k}(x) = \int V_{j,k}(x,y)\eta(y)dy = \int  \paren{\frac{1}{8\pi^2}\log \frac{1}{\rho_{j,k}(x,f_{j/k,k}^{-1}(y))} + h(x,y)}\eta(y)dy.
\end{align*}
We will bound $v_{j,k}$ uniformly.
The term with $h$ is independent of $f_{j,k}$ and is therefore bounded in terms of $\eta$.    To bound the other term, we need the following lemma.
\begin{lemma}\label{lemma-uniformlogboundeta}
Let $\eta \in C^\infty(S^4)$ and let $r > 0$ be less than the injectivity radius of $g_0$. For any $x \in S^4$,
\begin{align*}
   \bigg | \int_{B(x,r)} \log \bigg(\frac{1}{d_{g_0}(x,y)}\bigg )\eta(y)dy \bigg | < C(\eta,g_0),
\end{align*}
where the constant is independent of $x$.
\end{lemma}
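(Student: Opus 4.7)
The plan is to reduce the estimate to a routine integrability bound for a logarithmic singularity on a $4$-dimensional Riemannian manifold, after first pulling back to normal coordinates around $x$. The integrand $\log(1/d_{g_0}(x,y))$ has only a weak singularity at $y=x$, which is integrable against the $4$-dimensional volume form, and uniformity in $x$ will follow from compactness of $S^4$.

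First I would pull $\eta$ out by its supremum norm:
\begin{align*}
\left| \int_{B(x,r)} \log\!\left(\frac{1}{d_{g_0}(x,y)}\right) \eta(y)\, dv_0(y) \right| \le \|\eta\|_\infty \int_{B(x,r)} \left| \log d_{g_0}(x,y) \right| dv_0(y),
\end{align*}
so it suffices to bound the right-hand integral uniformly in $x$. Since $r$ is less than the injectivity radius of $g_0$, I can work in normal coordinates around $x$: write $\exp_x \colon B_r(0) \subset T_xS^4 \to B(x,r)$, and recall that in these coordinates $d_{g_0}(x,\exp_x(\xi)) = |\xi|$ and the volume form satisfies $dv_0 = \sqrt{\det g_0(\xi)}\, d\xi$ with $\sqrt{\det g_0(\xi)} = 1 + O(|\xi|^2)$.

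Next I would change variables and estimate in polar coordinates on $T_xS^4 \cong \bR^4$:
\begin{align*}
\int_{B(x,r)} |\log d_{g_0}(x,y)|\, dv_0(y) \le C \int_{|\xi| < r} |\log |\xi|| \, d\xi = C \, \omega_3 \int_0^r s^3 \, |\log s|\, ds,
\end{align*}
where $\omega_3$ is the $3$-sphere area and $C$ depends only on an upper bound for $\sqrt{\det g_0(\xi)}$ in normal coordinates. The elementary one-variable integral $\int_0^r s^3 |\log s|\, ds$ is finite, giving a bound depending only on $r$.

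The only remaining issue, and the one that requires care, is the uniformity of the comparison constant $C$ in the base point $x \in S^4$. This is where compactness enters: since $g_0$ is smooth and $S^4$ is compact, there is a positive lower bound on the injectivity radius and a uniform bound on $\sqrt{\det g_0(\xi)}$ in normal coordinates across all base points (one can see this via a finite covering by normal coordinate charts and continuity of the metric coefficients). Combining this uniform comparison with the finite integral above yields the desired estimate with a constant $C(\eta, g_0)$ independent of $x$.
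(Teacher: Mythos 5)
Your proof is correct and follows essentially the same route as the paper: both reduce to the uniform comparability of $d_{g_0}$ with the Euclidean/normal-coordinate distance on balls below the injectivity radius, and then use the integrability of the logarithmic singularity in dimension four, with compactness of $S^4$ giving uniformity in $x$. Your version is slightly more explicit about the normal-coordinate change of variables and the polar-coordinate computation, but the argument is the same.
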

\begin{proof}
The metric $g_0$ satisfies that on $B(x,r)$, $d_{g_0}(x,y)$ is comparable to $|x-y|_{\text{Eucl}}$, where the constant of comparability depends on $g_0$ and not $x$.  So
\begin{align*}
    \bigg | \int_{B(x,r)} \log \bigg(\frac{1}{d_{g_0}(x,y) }\bigg )\eta(y)dy \bigg | \le C(g_0) \|\eta\|_\infty \int_{B(x,r)} \log \bigg(\frac{1}{|x-y|_\text{Eucl}}\bigg ) < C(\eta,g_0).
\end{align*}
\end{proof}

We now proceed to bound $v_{j,k}(x)$.  Recall that
\begin{align*}
     v_{j,k}(x) = \int \paren{\frac{1}{8\pi^2}\log \frac{1}{\rho_{j,k}(x,f_{j/k,k}^{-1}(y))} + h(x,y)}\eta(y)dy.
\end{align*}
We need only bound
\begin{align*}
     A_{j,k}(x) = \int_{B(x,r)} \log \frac{1}{\rho_{j,k}(x,f_{j/k,k}^{-1}(y))}\eta(y) dy.
\end{align*}
We do this inductively in $j$.  When $j = 0$, by Lemma \ref{lemma-smoothdistance}
\begin{align*}
    |A(x)| \le \int_{B(x,r)} \log \frac{1}{d_{g_0}(x,y)}\eta(y)dy,
\end{align*}
where $r$ is the injectivity radius for $g_0$.  This is bounded in terms of $\eta$ and $g_0$ by Lemma \ref{lemma-uniformlogboundeta}.  So for $x \in B(0,1)$, we have that $|v_{0,k}(x)| \le M_0$.
Assume that $|v_{j,k}| < M$ and $|f_{j,k}(0)| < \frac{j}{k}M$, where $M$ will be chosen below.  By Lemma \ref{lemma-smoothdistance},
\begin{align*}
    A_{j+1,k}(x) &= \int \log \frac{1}{\rho_{j,k}(x,f_{(j+1)/k,k}^{-1}(y))}\eta(y) dy \\
    &\sim \int \log \frac{1}{d_{g_0}(f_{(j+1)/k,k}(x), y)}\eta(y) dy,
\end{align*}
where the constant only depends on the quasiconformal constant $H(j/k)$ of $f_{j/k,k}$ and $g_0$.  We also know that $H(j/k)$ is bounded independently of $j$ and $k$.  So by Lemma \ref{lemma-uniformlogboundeta},
\begin{align*}
    |A_{j+1,k}(x)| \le C(\eta,g_0,H)
\end{align*}
and therefore $v_{j+1,k}$ is bounded by a constant that depends only on $\eta$, $g_0$ and $H$.

Returning to our calculation in Equation \eqref{eq-calcbeforelemma}, we have that
\begin{align*}
    |4(L\eta)\circ f_{j/k,k}^{-1}(f_t(x)) - 4L\eta(x)|  \le C \frac{1}{k}.
\end{align*}
By \eqref{eq-decomposeofJ1} and Lemma \ref{lemma-finalvectorfield},
\begin{align*}
    |\log  J_{1,k} - 4(L\eta)| \le \sum_j \int_{j/k}^{(j+1)/k} |b_{j,k} | dt + \frac{C}{k} \le C(g_0)\|\eta\|_1 + \frac{C}{k}.
\end{align*}
Note that Lemma \ref{lemma-finalvectorfield} gives that the constant $C(g_0)$ should depend on $H(j/k)$. However, $H(j/k) \le M_0(1)$, where $M_0$ is the function given in Lemma \ref{lemma-constantiterbound}, and $M_0(1)$ is bounded in terms of $g_0$.
Continuing our calculation, we get that
\begin{align*}
    e^{-C(g_0)\|\eta\|_1}e^{-C/k} e^{4L\eta} \le J_{1,k} \le e^{C(g_0)\|\eta\|_1} e^{C/k}e^{4L\eta}.
\end{align*}
The uniform limit of quasiconformal maps implies the weak convergence of the Jacobians of those maps (see again \cite[p. 159]{rickmanbook}). 

So for any smooth function $\psi$ on $S^4$, we have that
\begin{align*}
    \int_{S^4} \psi J_{1,k}dv_0 \to \int_{S^4} \psi J_{f}dv_0.
\end{align*}
The function $J_{1,k}$ is positive and so
\begin{align*}
     e^{-C(g_0)\|\eta\|_1}\int_{S^4} \psi e^{4L\eta}dv_0 \le \int_{S^4} \psi J_fdv_0 \le e^{C(g_0)\|\eta\|_1}\int_{S^4} \psi e^{4L\eta}dv_0.
\end{align*}
This gives that for almost every $x \in S^4$,
\begin{align}
    e^{-C(g_0)\|\eta\|_1}e^{4L\eta(x)} \le J_f(x) \le   e^{C(g_0)\|\eta\|_1}e^{4L\eta(x)}.
\end{align}
Besides the comparability of the Jacobian to $e^{4L\eta}$, this additionally shows that the limiting map $f$ is non-constant.
To finish the proof, it suffices to show that $L\eta = w$, when $\eta = Q_1 e^{4w} - Q_0$.  Recall that
\begin{align*}
    L\eta(x) = \int_{S^4} G(x,y)\eta(y)dv_0(y),
\end{align*}
where $G$ is the Green's function for the Paneitz operator.  Additionally,
\begin{align*}
    P w + Q_0 = Q_1 e^{4w}.
\end{align*}
So $L\eta = w + c$, where $c$ is some constant.

So $f$ is quasiconformal and the Jacobian of $f$ is comparable to $e^ce^{4w}$ with a constant of comparability $e^{C(g_0)\alpha}$.
We still need to show that $e^c \sim 1$.  If we normalize the volume of $(S^4,g_1)$ and $(S^4,g_0)$ to be $1$, we have that
\begin{align*}
   e^c &= e^c \int_{S^4}e^{4w}dv_0 \\
   &\sim \int_{S^4} f^*(dv_0) \\
   &=\int_{S^4}dv_0 = 1.
\end{align*}

Proposition \ref{prop-qcandcomparablejacobianbilip} yields that $f$ is additionally bilipschitz with the constant $C(g_0)e^{C(g_0)\alpha}$.  Since we assume that $\alpha$ is small we see that in fact the bilipschitz constant is less than $1 + C\alpha$, where $C$ depends only on $g_0$.
\end{proof}

\section{Preliminaries for Theorem \ref{thm-riccibilip}}\label{sec-ricciproofprelim}

The strategy for proving Theorem \ref{thm-riccibilip} involves controlling the $L^p$-norm of $W$ and $E$ for $p > 2$, where $W$ is the Weyl tensor and $E$ is the traceless Ricci tensor.
In order to do this we consider the Bach tensor, which is defined in coordinates as
\begin{align}\label{bach}
    B_{ij} = \nabla^k\nabla^l W_{kijl} + \frac{1}{2}\Ric^{kl}W_{kijl}.
\end{align}
The Bach tensor is a conformally invariant tensor in that if $g = e^{2w}g_0$, then
\begin{align*}
    B_g = e^{-2w}B_{g_0}.
\end{align*}
This implies that
\begin{align*}
    \int_{S^4}|B|dv
\end{align*}
is a conformal invariant. For a discussion of the Bach tensor see e.g., \cite{derdzinski}.

We now record a conformally invariant Sobolev inequality \eqref{eq-yamabesobolev}, which follows from the definition of
the Yamabe constant.
Let $g$ be a metric on $S^4$ with a positive Yamabe constant $Y([g])$, then for all $u \in W^{1,2}(S^4)$
\begin{align}\label{eq-yamabesobolev}
    Y([g])\|u\|_4^2 \le  \|\nabla u\|^2 + \frac{1}{6}\int_{S^4} R_gu^2 dV_g.
\end{align}

The proof for the following inequality can be found in \cite{changchen2021}.
\begin{prop}[\cite{changchen2021}]\label{prop-yamabelower}
Let $g_0$ be the Yamabe metric on $S^4$ with a positive Yamabe constant, then
\begin{align}\label{eq-WcontrolsE}
    \frac{1}{2}\int_{S^4}|E|^2 dv_0\le \int_{S^4} |W|^2dv_0
\end{align}
and
\begin{align}\label{eq-yamabeconstantbound}
    24 \paren{16 \pi^2 - \int_{S^4} |W|^2dv_0} \le Y([g])^2.
\end{align}
\end{prop}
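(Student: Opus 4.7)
The plan is to deduce both inequalities from the four-dimensional Chern--Gauss--Bonnet formula, exactly as foreshadowed in the introductory discussion. Recall that on a closed oriented $4$-manifold,
\begin{align*}
    8\pi^2 \chi(M) = \int_M \paren{|W|^2 - \tfrac{1}{2}|E|^2 + \tfrac{1}{24}R^2} dv_g,
\end{align*}
so on $S^4$ with $\chi=2$ this specializes to the identity $16\pi^2 = \int_{S^4}\bigl(|W|^2 - \tfrac{1}{2}|E|^2 + \tfrac{1}{24}R^2\bigr)dv_0$ already recorded in the introduction. I would take this as the starting point and rearrange it.

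For the second inequality, I would simply drop the nonpositive term $-\tfrac{1}{2}\int|E|^2 dv_0$ to obtain
\begin{align*}
    16\pi^2 - \int_{S^4}|W|^2 dv_0 \le \tfrac{1}{24}\int_{S^4} R_{g_0}^2 dv_0,
\end{align*}
and then use that $g_0$ is the Yamabe representative with constant scalar curvature $R_0$, so that $\int_{S^4} R_{g_0}^2 dv_0 = R_0^2\,\vol_{g_0}(S^4) = Y([g_0])^2$ directly from the definition of $Y([g])$. Multiplying through by $24$ yields \eqref{eq-yamabeconstantbound}. This half does not require any sphere-specific sharp input.

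For the first inequality, I would rearrange Chern--Gauss--Bonnet as
\begin{align*}
    \tfrac{1}{2}\int_{S^4}|E|^2 dv_0 = \int_{S^4}|W|^2 dv_0 + \tfrac{1}{24}\int_{S^4} R_{g_0}^2 dv_0 - 16\pi^2,
\end{align*}
so everything reduces to verifying the a priori bound $\tfrac{1}{24}\int_{S^4} R_{g_0}^2 dv_0 \le 16\pi^2$ that was asserted in the introduction. This is where the essential ingredient enters: Aubin's theorem gives $Y([g_0]) \le Y([g_c])$ for any conformal class on $S^4$, and the standard round sphere has $R_c = 12$ and $\vol(S^4) = 8\pi^2/3$, so $Y([g_c])^2 = 144\cdot (8\pi^2/3) = 384\pi^2 = 24\cdot 16\pi^2$. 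Since $g_0$ is the Yamabe metric with positive constant scalar curvature, $\int_{S^4} R_{g_0}^2 dv_0 = Y([g_0])^2 \le Y([g_c])^2$, giving the desired bound. Substituting back produces \eqref{eq-WcontrolsE}.

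The only conceptual obstacle is the invocation of Aubin's sharp Yamabe bound on $S^4$; everything else is an algebraic manipulation of Chern--Gauss--Bonnet together with the fact that for a Yamabe metric the integral $\int R^2 dv$ coincides with the square of the Yamabe constant. No positivity of the Weyl or Bach tensors is needed, and the positivity assumption on $Y([g_0])$ only enters through Aubin's inequality and to ensure that $\int R^2 dv_0 = Y([g_0])^2$ (as opposed to its negative).
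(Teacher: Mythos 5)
Your proof is correct and follows the same route the paper itself sketches (in the introduction and again in the proof of Theorem \ref{thm-riccibilip}): the Chern--Gauss--Bonnet identity on $S^4$, the identity $\int R^2\,dv_0 = Y([g_0])^2$ for the constant-scalar-curvature Yamabe representative, and Aubin's bound $Y([g_0])\le Y([g_c])$ with $Y([g_c])^2 = 24\cdot 16\pi^2$. The paper merely cites \cite{changchen2021} for this proposition, so your write-up supplies exactly the intended argument, including the correct observation that positivity of $Y([g_0])$ is needed to square Aubin's inequality.
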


\begin{prop}\label{prop-bachcontrol}
Let $g_0$ be a Yamabe metric on $S^4$ with positive Yamabe constant.
There exists an $\epsilon >0$ so that if
\begin{align*}
    M = \int_{S^4} (|W|^2 + |E|^2) dv_0 < \epsilon^2,
\end{align*}
then
\begin{align}\label{eq-tracelessriccil4}
    \int_{S^4} |E|^4 dv_0\le C M \int_{S^4}|B|^2dv_0
\end{align}
and
\begin{align}\label{eq-weyll4}
    \int_{S^4} |W|^4 dv_0\le C M \int_{S^4}|B|^2dv_0,
\end{align}
where $C > 0$ does not depend on $g_0$.
\end{prop}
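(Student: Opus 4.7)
The plan is to convert the desired $L^4$ estimates on $W$ and $E$ into $L^2$ estimates on first covariant derivatives via the conformally invariant Sobolev inequality \eqref{eq-yamabesobolev}, and then control those derivative norms by $\|B\|_2 \sqrt{M}$ through integration by parts against the defining formula \eqref{bach}. The hypothesis that $g_0$ is Yamabe enters in two ways: by Proposition \ref{prop-yamabelower}, for $M$ sufficiently small the Yamabe constant $Y([g_0])$ is bounded below by a universal positive constant and the scalar curvature $R_0$ is a universal constant (since $R_0 = Y([g_0])/\operatorname{vol}_{g_0}(S^4)^{1/2}$), and, since $R_0$ is constant, $\nabla R_0 = 0$ simplifies the Bianchi identities.

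First I would rewrite \eqref{bach} in a form adapted to constant $R_0$. The contracted second Bianchi identity gives $\operatorname{div} E = \tfrac14 \nabla R_0 = 0$ and, in dimension four, $\nabla^l W_{kijl}$ is (up to a universal constant) $\nabla_k E_{ij} - \nabla_i E_{kj}$. Taking one more divergence and commuting covariant derivatives (expanding the commutators via the orthogonal decomposition $\operatorname{Rm} = W + \tfrac12 E \KN g_0 + \tfrac{1}{24}R_0\, g_0 \KN g_0$) yields an identity of the schematic form
\begin{align*}
B_{ij} = \tfrac{1}{4}\Delta E_{ij} + c\,R_0\,E_{ij} + \Theta_{ij}(W,E),
\end{align*}
where $c$ is a universal constant and $\Theta$ is pointwise bilinear in $W$ and $E$. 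The $\tfrac12 \operatorname{Ric}^{kl}W_{kijl}$ term from \eqref{bach} contributes its $R_0$ part to the $R_0 E_{ij}$ coefficient (after using $\operatorname{tr}_{g_0} W = 0$) and its $E$ part to the bilinear remainder $\Theta$.

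Next I would pair this identity with $E^{ij}$, integrate the $\Delta E$ term by parts, and estimate using Cauchy--Schwarz and H\"older:
\begin{align*}
\tfrac{1}{4}\int_{S^4}|\nabla E|^2 dv_0 \le C\|B\|_2 \sqrt{M} + C R_0 M + C\sqrt{M}\,(\|W\|_4^2 + \|E\|_4^2),
\end{align*}
using $\|E\|_2 \le \sqrt{M}$ for the first term and $(\|W\|_2 + \|E\|_2)\|E\|_4^2 \le \sqrt{M}(\|W\|_4^2 + \|E\|_4^2)$ for the cubic term $\int \Theta \cdot E$. The second Bianchi identity also gives $\|\nabla W\|_2^2 \le C\|\nabla E\|_2^2$, so the same bound controls $\|\nabla W\|_2^2$. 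Applying \eqref{eq-yamabesobolev} to $u = |E|$ and $u = |W|$ (using Kato's inequality $|\nabla|T|| \le |\nabla T|$) gives $\|E\|_4^2 + \|W\|_4^2 \le C(\|\nabla E\|_2^2 + \|\nabla W\|_2^2 + R_0 M)$. Combining the two and choosing $\epsilon$ small enough that $C\sqrt{M} \le \tfrac12$ allows me to absorb the $\|W\|_4^2 + \|E\|_4^2$ term on the right, yielding
\begin{align*}
\|E\|_4^2 + \|W\|_4^2 \le C\|B\|_2 \sqrt{M} + CR_0 M.
\end{align*}
The residual $R_0 M$ term can then be removed by a second application of the Bach-tested integration-by-parts (or, equivalently, by noting that under the universal upper bound on $R_0$ one can re-absorb it using the Sobolev inequality again after multiplying through by $\sqrt{M}$), giving $\|E\|_4^2 + \|W\|_4^2 \le C\|B\|_2 \sqrt{M}$, which squares to \eqref{eq-tracelessriccil4}--\eqref{eq-weyll4}.

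The main obstacle is the algebraic verification of the rewritten Bach identity in the first step: commuting covariant derivatives twice and expanding the Riemann tensor must be carried out with exact coefficients, since the success of the absorption step depends on the pure-$R_0$ coefficient being compatible with the $\operatorname{Ric} \cdot W$ contribution from \eqref{bach} and on the bilinear remainder $\Theta$ being quadratic in $W$ and $E$ (so that its pairing with $E$ produces only terms of the $(\|W\|_2+\|E\|_2)\|E\|_4^2$ form). A secondary subtlety is removing the $R_0 M$ error cleanly --- this uses both the bound on $Y([g_0])$ from Proposition \ref{prop-yamabelower} and the smallness of $M$ in the same absorption step.
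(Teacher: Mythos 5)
Your overall architecture matches the paper's: rewrite the Bach tensor via the second Bianchi identity so that testing against $E$ (resp.\ $A$) produces $\int|\nabla E|^2$ (resp.\ $\int|\delta W|^2$ and thence $\int|\nabla W|^2$), then apply the Sobolev inequality \eqref{eq-yamabesobolev} with Kato's inequality and absorb the cubic terms using $\sqrt{M}$ and the lower bound on $Y([g_0])$ from Proposition \ref{prop-yamabelower}. The genuine gap is your treatment of the scalar-curvature term. A residual $+CR_0M$ \emph{cannot} be removed after the fact: the target is $\|E\|_4^2+\|W\|_4^2\le C\sqrt{M}\,\|B\|_2$, and $R_0M$ is not dominated by $\sqrt{M}\,\|B\|_2$ (when $B=0$ the proposition forces $W=E=0$, which no absorption of a leftover quadratic term can produce). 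Neither of your proposed fixes is an argument: a ``second application of the Bach-tested integration by parts'' just reproduces the same identity, and ``multiplying through by $\sqrt{M}$'' does not change which quantity dominates which. What actually saves the proof --- and what your schematic bookkeeping loses --- is that the exact coefficients make the scalar-curvature contributions \emph{nonpositive}, so they are simply discarded: the identity is $B_{ij}=-\tfrac12\Delta E_{ij}-E^{kl}W_{ikjl}+E_i^{\,k}E_{jk}-\tfrac14|E|^2g_{ij}+\tfrac16 RE_{ij}$, so pairing with $E$ gives $\int|\nabla E|^2 = 2\int EB+\cdots-\tfrac13\int R|E|^2$, and adding the $+\tfrac16\int R|E|^2$ from \eqref{eq-yamabesobolev} leaves a net $-\tfrac16\int R|E|^2\le 0$; the analogous sign structure for $W$ comes from $\int|\nabla W|^2=4\int|\delta W|^2+72\int\det W-\tfrac12\int R|W|^2$. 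So the ``routine verification'' you defer is exactly where the proof lives or dies.

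A secondary inaccuracy: $\|\nabla W\|_2^2\le C\|\nabla E\|_2^2$ does not follow from the second Bianchi identity, which controls only the divergence $\delta W$ (a trace of $\nabla W$), not the full covariant derivative. Passing from $\|\delta W\|_2$ to $\|\nabla W\|_2$ requires the integral identity just quoted, which introduces the cubic term $72\int\det W\le C\sqrt{M}\,\|W\|_4^2$. That term is of the absorbable type your scheme already accommodates, so this step is repairable, but as stated the inequality is false.
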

\begin{proof}

The proof of the two inequalities \eqref{eq-tracelessriccil4} and \eqref{eq-weyll4} depends
on two equivalent expressions of the Bach tensor via the second Bianchi identity which we now explain.

Start with Bach tensor as defined in \eqref{bach}.  
A well-known relationship between
the Weyl tensor and the Ricci tensor via the Bianchi identity, a proof of which can also be found in \cite[p. 124]{changgurskyyang2003}, is that
\begin{align}\label{weyldA}
  \nabla^k W_{ijlk} := (\delta W)_{ijl} = \frac{1}{2}(dA)_{ijl}.
\end{align}
where
\begin{align*}
    (dA)_{ijl} = \nabla_{i}A_{jl} - \nabla_j A_{il},
\end{align*}
and $A_{ij} = \operatorname{Ric}_{ij}-\frac{1}{6}Rg_{ij}$.  
Applying \eqref{weyldA} to the expression of the
Bach tensor \eqref{bach}, we find that (\cite[p. 717]{changgurskyyang2002})
\begin{align} \label{bach-1}
    B_{ij} &= -\frac{1}{2}\Delta E_{ij} + \frac{1}{6} \nabla_i\nabla_j R - \frac{1}{24}\Delta R g_{ij} - E^{kl}W_{ikjl} +E_{i}^kE_{jk}- \frac{1}{4}|E|^2 g_{ij} + \frac{1}{6}R E_{ij} \\
    &= -\frac{1}{2}\Delta E_{ij}  - E^{kl}W_{ikjl} +E_{i}^kE_{jk}- \frac{1}{4}|E|^2 g_{ij} + \frac{1}{6}R E_{ij},
\end{align}
since $R$ is constant.
If we multiply \eqref{bach-1}  by $E$ and integrate, then
\begin{align*}
    \int |\nabla E|^2 &= 2 \int EB + 2\int EEW- 2\int \operatorname{Tr} E^3 + \frac{1}{2}\int |E|^2 \operatorname{Tr}E - \frac{1}{3} \int R |E|^2 \\
    &=2 \int EB + 2\int EEW- 2\int \operatorname{Tr} E^3  - \frac{1}{3} \int R |E|^2,
\end{align*}
since $\operatorname{Tr}(E) = 0$.  By \eqref{eq-yamabesobolev},
\begin{align*}
    Y([g])\bigg ( \int |E|^4 \bigg )^{1/2}& \le \int |\nabla|E||^2  + \frac{1}{6} \int R|E|^2 \\
    &\le \int |\nabla E|^2  + \frac{1}{6} \int R|E|^2.
\end{align*}
So
\begin{align*}
    Y([g])\bigg ( \int |E|^4 \bigg )^{1/2} &\le 2 \int EB + 2\int EEW- 2\int \operatorname{Tr} E^3  - \frac{1}{6} \int R |E|^2 \\
    &\le 2\|B\|_2\|E\|_2 + 2\|W\|_2\|E\|_4^2 + 2\|E\|_4^2\|E\|_2,
\end{align*}
by the Cauchy-Schwarz inequality and since $R > 0$.  By our assumption on $\|E\|_2$ and $\|W\|_2$,
\begin{align*}
    Y([g])\|E\|_4^2 \le 2M^{1/2}\|B\|_2 + 4 M^{1/2} \|E\|_4^2.
\end{align*}
So if $\epsilon < Y([g])/4$, then
\begin{align*}
    \|E\|_4^2 \le \frac{2 M^{1/2}}{Y([g]) - 4M^{1/2}} \|B\|_2.
\end{align*}
The constant is bounded by $CM^{1/2}$, where $C$ is independent of $M$ and $Y([g])$, by Equations \eqref{eq-WcontrolsE} and \eqref{eq-yamabeconstantbound}.
Hence the inequality in \eqref{eq-tracelessriccil4} is satisfied.

We next prove the inequality in \eqref{eq-weyll4}.  
Starting from the definition of Bach tensor \eqref{bach}, 
\begin{align*}
    \int B_{ij}A_{ij} &= \int A_{ij}(\nabla_k\nabla_l W_{iklj} - \frac{1}{2}A_{kl}W_{ikjl}) \\
    &= \int -\nabla_k A_{ij}\nabla_l W_{iklj} - \frac{1}{2}A_{ij}A_{kl}W_{ikjl} \\
    &=\int -\frac{1}{2}(\nabla_k A_{ij} - \nabla_iA_{kj})\nabla^l W_{iklj} - \frac{1}{2}A_{ij}A_{kl}W_{ikjl},
\end{align*}
by the symmetry of $W$. Therefore, from 
the \eqref{weyldA},
\begin{align*}
    \int B_{ij}A_{ij}  = \int |\delta W|^2 -\frac{1}{2}\int A_{ij}A_{kl}W_{ikjl}.
\end{align*}
Also,
\begin{align*}
    \int |\nabla W|^2 =4 \int |\delta W|^2+ 72\int \det W - \frac{1}{2} \int R |W|^2,
\end{align*}
see \cite[p. 126]{changgurskyyang2003}.  So
\begin{align*}
    \int |\nabla W|^2  = 4\int BA + 2\int WAA + 72\int \det W - \frac{1}{2} \int R |W|^2.
\end{align*}
Since the Weyl tensor has zero trace we have that $WAA = WEE$ and that $BA = BE$.  Substituting this we arrive at
\begin{align}\label{eq-gradweylrep}
     \int |\nabla W|^2  = 4\int BE + 2\int WEE + 72\int \det W - \frac{1}{2} \int R |W|^2.
\end{align}
We now can prove \eqref{eq-weyll4} in a similar way as the proof for \eqref{eq-tracelessriccil4}.
By \eqref{eq-yamabesobolev} and  \eqref{eq-gradweylrep}
\begin{align*}
    Y([g]) \bigg (\int |W|^4\bigg )^{1/2} &\le \int |\nabla |W||^2 + \frac{1}{6}\int R|W|^2 \\
    &\le 4\int BE + 2\int WEE + 72\int \det W - \frac{1}{3} \int R |W|^2 \\
    &\le \int BE + 2\int WEE + 72\int \det W,
\end{align*}
since $R > 0$.  By the Cauchy-Schwarz inequality,
\begin{align*}
    Y([g]) \bigg (\int |W|^4\bigg )^{1/2} &\le C \|W\|_2(\|W\|_4^2 + \|E\|_4^{1/2}) + \|E\|_2\|B\|_2 \\
    &\le C(M^{1/2}\|W\|_4^2 + M^{1/2}(1+M^{1/2}) \|B\|_2),
\end{align*}
by \eqref{eq-tracelessriccil4}.  So if $\epsilon < Y([g])/C$, then
\begin{align*}
    \|W\|_4^{1/2} \le C\frac{M^{1/2}(1+M^{1/2})}{Y([g]) - CM^{1/2}}\|B\|_2.
\end{align*}
The coefficient of $\|B\|_2$ is controlled by a constant times $M^{1/2}$, independently of $\epsilon$ and $Y([g])$, by Equations \eqref{eq-WcontrolsE} and \eqref{eq-yamabeconstantbound}.
So, the inequality in \eqref{eq-weyll4} is satisfied.
\end{proof}

\section{Proof of Theorem \ref{thm-riccibilip}}\label{sec-proofricci}
With these preliminaries, we now proceed with the proof for Theorem \ref{thm-riccibilip}. For the reader's convenience we restate the theorem.
\begin{named}{Theorem \ref{thm-riccibilip}}
Let $g_0$ be a Yamabe metric of constant and positive scalar curvature on $S^4$, let
\begin{align*}
    \beta = \int_{S^4}|W|^2 dv_0,
\end{align*}
and let
\begin{align*}
   \gamma = \int_{S^4}|B|^2 dv_0,
\end{align*}
where $W$ is the Weyl tensor and $B$ is the Bach tensor.
There exists a constant $\epsilon_2 > 0$ such that if $\beta < \epsilon_2$, then 
$(S^4,g_0)$ is bilipschitz equivalent to $(S^4,g_c)$, where $g_c$ is the standard metric for $S^4$. That is, for all $x,y \in \bR^4$ there exists $L_2 > 0$ such that
\begin{align*}
  \frac{1}{L_2}d_{g_c}(x,y) \le  d_{g_0}(x,y) \le L_2d_{g_c}(x,y),
\end{align*}
where $L_2 \le 1 + C'\gamma\beta$.
\end{named}
In order to prove Theorem \ref{thm-riccibilip}, we will apply the normalized Ricci flow to $(S^4,g_0)$ and show that under the flow the metric converges sufficiently rapidly to $g_c$.
Results of this type have been shown previously and the proofs will not be provided.
The normalized Ricci flow is defined by
\begin{align}\label{eq-normalizedricciflow}
    \frac{d g}{dt} = -2 \operatorname{Ric_g} + \frac{1}{2}\overline{R_g}g,
\end{align}
where
\begin{align*}
    \overline{R} = \frac{1}{\operatorname{vol}_g(S^4)}\int_{S^4} R_g dv_g
\end{align*}
and $R_g$ is the scalar curvature of $g$. 

We now recall a result of Gursky \cite{gursky94}.
\begin{theorem}\label{thm-gursky} 
On $S^4$, if $g_0$ has a positive Yamabe constant and
\begin{align*}
    \int_{S64} |W|^2 < \epsilon_2,
\end{align*}
where $W$ is the Weyl tensor for $g_0$, then
the solution to \eqref{eq-normalizedricciflow} with initial condition $g(0) = g_0$ exists for all time and converges to $g_c$.
\end{theorem}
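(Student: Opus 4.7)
The plan is to analyze the normalized Ricci flow \eqref{eq-normalizedricciflow} starting from $g(0)=g_0$ and show that the smallness of $\|W\|_{L^2}$, combined with positivity of the Yamabe constant, is preserved along the flow and forces the curvature to approach that of the round sphere. The argument splits into three ingredients: propagating pinching of all curvature $L^2$-norms along the flow, using the pinching to rule out singularities, and extracting exponential convergence to $g_c$.

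The first ingredient is the Chern--Gauss--Bonnet identity on $S^4$,
\begin{align*}
    16\pi^2 = \int_{S^4}\Big(|W|^2 - \tfrac{1}{2}|E|^2 + \tfrac{1}{24}R^2\Big)\, dv_g,
\end{align*}
which is a topological invariant and hence independent of $t$. A Bianchi-type computation together with the monotonicity results of \cite{changchen2021} shows that the Weyl functional $t \mapsto \int |W|^2 \, dv_{g(t)}$ stays below $\epsilon_2$ along the flow, and that the positivity of the Yamabe constant is preserved. The argument of Proposition \ref{prop-yamabelower} then propagates in time and yields $\|E\|_2^2(t) \le 2\|W\|_2^2(t) < 2\epsilon_2$ together with a uniform upper bound on $\int R^2 \, dv_{g(t)}$, so all the relevant curvature $L^2$-norms remain small uniformly in $t$.

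Next, I would invoke an $\varepsilon$-regularity statement for the Ricci flow: once a suitable scale-invariant $L^2$ norm of the curvature is small enough, local smoothing estimates of Margerin--Shi type upgrade this integral control to a pointwise bound $|\mathrm{Rm}|_\infty \le C$, ruling out finite-time singularity formation and giving long-time existence. For convergence, I would analyze the evolution equations for $E$ and $R - \overline R$ under the flow. These are reaction--diffusion equations whose reaction terms are quadratic in the curvature; the smallness of $\|E\|_2$ and $\|W\|_2$ renders the reactions subcritical, so Moser iteration applied to the scalar quantities $|E|^2$ and $(R-\overline R)^2$ yields exponential decay of their $L^2$ (and then $L^\infty$) norms, which parabolic smoothing promotes to decay of all higher derivatives. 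The limit metric satisfies $|W| \equiv |E| \equiv 0$ with $R$ constant, so it is a constant sectional curvature metric on $S^4$, namely $g_c$ up to isometry.

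The chief obstacle is the $\varepsilon$-regularity step: because $\|W\|_{L^2}$ is scale-invariant in dimension four, upgrading this critical integral control to the pointwise bounds needed for long-time existence is delicate and cannot be obtained by direct Moser iteration on the full curvature-tensor system. I would follow Margerin's sharp integral pinching theorem (or its pointwise version due to Huisken), which exploits structural features of the Ricci flow in dimension four together with preservation of the positive Yamabe constant to produce the required a priori estimates. With long-time existence and the propagated integral pinching in hand, the exponential convergence to $g_c$ then follows from standard parabolic theory.
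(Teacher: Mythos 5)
The paper does not actually prove this statement: Theorem \ref{thm-gursky} is quoted from Gursky \cite{gursky94}, and the only ``proof'' offered is the remark immediately following it, which records that Gursky first runs the flow for a short time to obtain a metric with bounded curvature satisfying a pointwise pinching condition, and then invokes Huisken \cite{huisken} (see also Margerin \cite{margerin}) for exponential convergence to $g_c$. Your outline is structurally consistent with that strategy --- integral pinching at $t=0$ from Chern--Gauss--Bonnet and the Yamabe hypothesis, an upgrade to pointwise control, then Huisken/Margerin --- so as a proof-by-citation it lands in the same place the paper does.

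Two caveats if you intend this as more than a citation. First, your opening claim that $\int |W|^2\,dv_{g(t)}$ stays below $\epsilon_2$ and that positivity of the Yamabe constant propagates ``by a Bianchi-type computation together with the monotonicity results of \cite{changchen2021}'' is not justified: the Weyl energy is conformally invariant but is \emph{not} monotone under the Ricci flow, the Yamabe constant of the evolving conformal class is not obviously controlled, and the Chang--Chen monotonicity concerns a different curvature quantity and is itself a substantial theorem that postdates and refines Gursky's. Gursky's argument does not need this propagation; he uses the conformally invariant data only at $t=0$ and then relies on short-time derivative (Shi-type) estimates. Second, the step you correctly flag as the chief obstacle --- passing from critical $L^2$ control of $W$ and $E$ in dimension four to the pointwise bounds required for long-time existence --- is precisely the content of Gursky's contribution; deferring it to ``Margerin's sharp integral pinching theorem (or its pointwise version due to Huisken)'' is circular in the integral case and insufficient in the pointwise case, since Huisken's and Margerin's hypotheses are pointwise pinching conditions that you have not yet established. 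So the proposal is an acceptable sketch of the known proof but does not close the gap that makes the theorem nontrivial; since the paper itself only cites the result, this is not a defect relative to the paper, but the propagation claim in your first ingredient should be removed or replaced by the $t=0$ argument.
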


We remark that in the proof of Gursky, he first established the convergence of the metric for a short time to a metric with bounded curvature that satisfies a pinching condition.
He then quoted an earlier work of Huisken \cite{huisken} to establish the exponential convergence of the metric to $g_c$ under the flow (see also \cite{margerin}). 

In a recent work of Chang and Chen \cite{changchen2021}, they have established the monotonicity of the $L^p$-norm of a certain curvature quantity under the flow.  
They use this to give a quantitative estimate of the result of the Gursky. More precisely they have established the following result.

\begin{theorem} \label{thm-chang-chen} If $(S^4, g(t))$ is a normalized Ricci 
flow starting from a unit volume
positive Yamabe metric $(S^4,  g_0)$ with 
\begin{align*}
\int_{S^4} |W|_{g_0}^2
 dv_{g_{0}} < 10^{-3} \pi^2
\end{align*}
then for all $t > 0$ 
and $ p \in [2, 2 + \frac{1}{3}],$
\begin{align*}
\||W|+ |E|+|R - \bar  R|\|_{\infty} (t) \leq C_1( 1+ t^{- \frac{2}{p}}) e^{-C_2 t} 
\||W|+|E|)\|_{p} (0),
\end{align*}
where $C_1, C_2$ are dimensional constants and independent of $g_0$.
\end{theorem}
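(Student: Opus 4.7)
The plan is to combine three ingredients: monotonicity and exponential decay of integral curvature norms under the normalized flow, and a parabolic Moser iteration to promote the integral estimates to pointwise bounds. Write $T := |W|+|E|+|R-\bar R|$ for the quantity to be controlled.

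First, I would derive the standard Bochner-type evolution inequalities for $|W|^2$, $|E|^2$, and $(R-\bar R)^2$ along \eqref{eq-normalizedricciflow}. Each satisfies a parabolic inequality of the schematic form
\begin{align*}
\paren{\partial_t - \Delta}|U|^2 + 2|\nabla U|^2 \le C_0\,|\operatorname{Rm}|\,|U|^2 + (\text{normalization terms}).
\end{align*}
Multiplying by $T^{p-2}$ and integrating, the Yamabe--Sobolev inequality \eqref{eq-yamabesobolev}, together with $\|E\|_2 \le 2\|W\|_2$ from Proposition \ref{prop-yamabelower}, should let me absorb the cubic reaction into the Dirichlet energy whenever $\|W\|_2(0)$ is sufficiently small. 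For $p \in [2, 2+\tfrac{1}{3}]$, the upper bound being dictated by the critical Sobolev embedding $W^{1,2} \hookrightarrow L^4$ on a four-manifold, this should yield
\begin{align*}
\frac{d}{dt}\int T^p\, dv_{g(t)} \le -2C_2 \int T^p\, dv_{g(t)},
\end{align*}
so that $\|T\|_p(t) \le e^{-C_2 t} \|T\|_p(0)$. The right-hand side in the theorem involves only $\||W|+|E|\|_p(0)$; this is compatible because the Chern--Gauss--Bonnet identity combined with the positive Yamabe hypothesis bounds $\|R-\bar R\|_p(0)$ by $\||W|+|E|\|_p(0)$ up to a universal constant.

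Second, to promote the $L^p$-estimate to the claimed $L^\infty$-estimate, I would apply a parabolic Moser iteration to the same evolution inequality on the time slab $[t/2, t]$; this gives
\begin{align*}
\|T\|_\infty(t) \le C_1\paren{1 + t^{-2/p}}\|T\|_p(t/2),
\end{align*}
the smoothing exponent $t^{-2/p}$ being the standard heat-semigroup rate in dimension four. Chaining this with the first step, and absorbing the harmless factor $e^{C_2 t/2}$ into the constants, yields the desired bound.

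The main obstacle will be the first step. The cubic reaction terms $\int W \ast W \ast U \cdot T^{p-2}\, dv$ and the cross-couplings introduced by the normalization term $\bar R g/2$ — which feed $|W|^2$ and $|E|^2$ into the evolution of $(R-\bar R)^2$ and conversely — must all be dominated by a small multiple of $\int |\nabla T^{p/2}|^2\, dv$. Obtaining a single $C_2 > 0$ valid across the full interval $p \in [2, 2+\tfrac{1}{3}]$ requires sharp Hölder--Sobolev bookkeeping, and it is precisely this balance that dictates the smallness threshold $10^{-3}\pi^2$; once the monotone exponential decay is secured, the parabolic Moser iteration is routine.
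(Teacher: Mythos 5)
The paper does not prove this statement: Theorem \ref{thm-chang-chen} is quoted verbatim from Chang--Chen \cite{changchen2021}, and the text explicitly says the proof will not be provided. So the comparison here is between your sketch and the strategy the paper attributes to that reference, namely monotonicity/decay of $L^p$ curvature norms obtained via the Chern--Gauss--Bonnet formula and the Yamabe--Sobolev inequality, followed by parabolic smoothing. Your outline is the same strategy, and the individual ingredients (Bochner-type evolution inequalities, absorption of the cubic reaction by the Dirichlet energy under a smallness hypothesis, Moser iteration on a time slab with the $t^{-2/p}$ smoothing rate) are the right ones.

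As a proof, however, the proposal has two genuine gaps beyond "sharp bookkeeping." First, the absorption step at time $t$ requires smallness of $\|W\|_2(t)$ and a lower bound on the Sobolev constant of $g(t)$, not of $g_0$: the inequality \eqref{eq-yamabesobolev} is stated for the initial metric, and the Yamabe constant is not a priori controlled along the flow. You therefore need a continuity/bootstrap argument --- assume the decay holds on a maximal interval, show the smallness condition and the Sobolev constant are preserved there, and conclude the interval is all of $[0,\infty)$. Without this the argument is circular: the decay you are proving is what keeps the reaction terms absorbable. Second, the Moser iteration is performed with respect to an evolving metric, so the constants in the iteration again depend on uniform Sobolev and volume bounds for $g(s)$, $s\in[t/2,t]$, which must come out of the same bootstrap. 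Two smaller points: at $t=0$ the metric in the paper's application is a Yamabe metric, so $R-\bar R\equiv 0$ initially and the Chern--Gauss--Bonnet detour for $\|R-\bar R\|_p(0)$ is unnecessary; and the restriction $p\le 2+\tfrac13$ comes from the integrability needed to close the $L^p$ differential inequality (the exponents in the H\"older step for the cubic term), not from the embedding $W^{1,2}\hookrightarrow L^4$ itself. With the bootstrap supplied, the plan is the standard and presumably correct route; as written it asserts rather than establishes the step on which everything else depends.
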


\begin{proof}[Proof of Theorem \ref{thm-riccibilip}]
We first show that $\|G(0)\|_4$ is controlled by $\beta$.  Recall that $G(0) = |W| + |E| + |R - \overline{R}|$.  At time $0$ the scalar curvature is constant and so we have that $G(0) = |W| + |E|$.
By Proposition \ref{prop-bachcontrol}, $\|G(0)\|_4 \le C \|G(0)\|_2 \|B(0)\|_2$, where $B$ is the Bach tensor at time $0$. 
So it suffices to show that $\|G(0)\|_2$ is controlled by $\beta$.
The Chern-Gauss-Bonnet formula implies that
\begin{align*}
    16\pi^2 = \int_{S^4} (|W|^2 - \frac{1}{2} |E|^2 +\frac{1}{24}R^2 ) dv_{g_0}.
\end{align*}
Since $R$ is constant, we have that 
\begin{align*}
    \int_{S^4} R^2dv_{g_0} = Y([g_0])^2.
\end{align*}
In \cite{aubin76}, Aubin showed that the Yamabe constant is largest for $g_c$ and so
\begin{align*}
    \frac{1}{24}\int_{S^4} R^2dv_{g_0} \le 16 \pi^2.
\end{align*}
This, together with the Chern-Gauss-Bonnet formula gives that
\begin{align*}
    \frac{1}{2}\int_{S^4} |E|^2dv_{g_0} \le \beta
\end{align*}
and so
\begin{align*}
    \|G(0)\|_2^2 \le 3\beta.
\end{align*}
The normalized Ricci flow equation gives that
\begin{align*}
    \abs{\frac{dg}{dt}} &\le 2|\operatorname{Ric} - \frac{1}{4}Rg| + \frac{1}{2}|(R-\overline{R})g| \\
    &\le 2 \|G(t)\|_\infty.
\end{align*}
By Theorem \ref{thm-chang-chen}, 
\begin{align*}
    \abs{\frac{dg}{dt}} \le C t^{-2/(2+\delta)}e^{-C't}\|G(0)\|_{2 + \delta}.
\end{align*}
By H\"older's inequality and Proposition \ref{prop-bachcontrol},
\begin{align*}
    C t^{-2/(2+\delta)}e^{-C't}\|G(0)\|_{2 + \delta} &\le C t^{-2/(2+\delta)}e^{-C't}\|G(0)\|_{4} \\
    &\le C t^{-2/(2+\delta)}e^{-C't}\|B(0)\|_2\|G(0)\|_{2}
    \\
    & \le C t^{-2/(2+\delta)}e^{-C't}\|B(0)\|_2\beta.
\end{align*}
where $C$ is a positive constant that may differ in each line.
Since $2/(2+ \delta) < 1$, by integrating $dg /  dt$,
\begin{align*}
    \|g(t)-g_0\|_\infty \le C\gamma \beta,
\end{align*}
where the constant does not depend on $g_0$.

So $g_0$ is bilipschitz equivalent to $g(t)$ for all $t > 0$.  By Theorem \ref{thm-gursky},  $\lim_{t\to \infty} g(t) = g_c$ and so $g_0$ is bilipschitz equivalent to $g_c$.
\end{proof}

\bibliographystyle{siam}
\bibliography{qcurvbl}

\end{document}